\newtheorem{theorem}{Theorem}[section]
\newtheorem{proposition}[theorem]{Proposition}
\newtheorem{corollary}[theorem]{Corollary}
\newtheorem{lemma}[theorem]{Lemma}
\theoremstyle{remark}
\newtheorem{definition}[theorem]{Definition}
\newtheorem{example}[theorem]{Example}
\newtheorem{remark}[theorem]{Remark}
\newcommand{\R}{{\mathbb R}}
\newcommand{\pfc}{{\mathcal F_c}}
\newcommand{\pkc}{{\mathcal K_c}}
\newcommand{\E}{{\mathbb E}}
\newcommand{\F}{{\mathbb F}}
\DeclareMathOperator{\argmin}{argmin}
\DeclareMathOperator{\midd}{mid}
\DeclareMathOperator{\spr}{spr}
\begin{document}
	\renewcommand{\arraystretch}{1.2}
	
	\begin{frontmatter}
		\title{Projection depth and  $L^r$-type depths for fuzzy random variables}
		\runtitle{Projection and $L^{r}$-type Fuzzy Depth}
		
		\begin{aug}
			\author[A]{\fnms{LUIS} \snm{GONZ\'ALEZ-DE LA FUENTE}\ead[label=e1]{gdelafuentel@unican.es}},
			\author[A]{\fnms{ALICIA} \snm{NIETO-REYES}\ead[label=e2]{alicia.nieto@unican.es}}
			\and
			\author[B]{\fnms{PEDRO} \snm{TER\'AN}\ead[label=e3]{teranpedro@uniovi.es}}
			\address[A]{Departamento de Matem\'aticas, Estad\'istica y Computaci\'on,
				Universidad de Cantabria (Spain)
			}
			
			\address[B]{Universidad de Oviedo (Spain)
			}
		\end{aug}
		
		\begin{abstract}
			Statistical depth functions are a standard tool in nonparametric statistics to extend order-based univariate methods to the multivariate setting. Since there is no universally accepted total order for fuzzy data (even in the univariate case) and there is a lack of parametric models, a fuzzy extension of depth-based methods is very interesting. In this paper, we adapt projection depth and $L^{r}$-type depth to the fuzzy setting, studying their properties and illustrating their behaviour with a real data example.		
        \end{abstract}
		
		\if0
		...........
		\begin{keyword}[class=MSC2010]
			\kwd[Primary ]{94D05}
			\kwd{62G99}
			\kwd[; secondary ]{62G30}
		\end{keyword}
		..........
		\fi
		
		\begin{keyword}
			\kwd{Fuzzy data}
			\kwd{Fuzzy random variable}
			\kwd{Nonparametric statistics}
			\kwd{Statistical depth}
			\kwd{Projection depth}
			\kwd{$L^{r}$-type depth}
		\end{keyword}
		
	\end{frontmatter}

\section{Introduction}\label{intro}
It has repeatedly been observed (see, e.g., \cite{AngelesIJAR,GrzegorzewskiUltimo}) that statistical analysis of fuzzy data faces several difficulties:
\begin{itemize}
	\item[(a)]The algebraic structure of fuzzy sets, which is not a linear space and lacks a subtraction operation.
	\item[(b)]Fuzzy sets lack a natural total order (even in $\R$) and many competing approaches to ranking fuzzy numbers exist.
	\item[(c)]There is a substantial lack of parametric models and no practically useful analog of the normal distribution.
\end{itemize}
In this situation, nonparametric methods which are taylored to the specific structure of fuzzy set spaces and incorporate a well-founded way to order a fuzzy data sample would be very interesting. That is exactly what {\em statistical depth for fuzzy data} \cite{primerarticulo} tries to achieve.

By definition, the medians are the points with respect to which at least half of the sample is smaller or equal, and at least half of the sample is greater or equal. A seemingly innocuous rewording replaces ordering by geometry: the medians are the points that split the real line into two half-lines each of which contains at least half of the sample. The 10th percentile is more outlying because the two half-lines it defines divide the sample very unevenly.

With this idea, Tukey \cite{tukey} realized that, in order to extend the notion of {\em position} of a point in a sample to the multivariate setting, it suffices to replace half-lines by half-spaces. To each $x\in\R^p$, Tukey associated a depth value $D(x)$ calculated as the greatest lower bound of the proportion of the sample points contained in any half-space whose boundary passes through $x$. Like in the real line, if $D(x)$ is very small there exists a hyperplane through $x$ splitting the sample very unevenly. That is, $x$ is quite outlying. And $D(x)$ will be largest if the sample is split (by the worst-case hyperplane through $x$) as evenly as it is possible. Thus data themselves define a way to rank points according to their centrality or outlyingness, without requiring a total ordering in $\R^p$.

Tukey's data-driven center-outward ordering is not unique. In time, more ways to assess statistical depth were discovered and eventually Zuo and Serfling \cite{ZuoSerfling} proposed a list of desirable properties for a statistical depth function.
Depth functions in the literature often fail to satisfy all those properties perfectly. The dominant view is that this does not automatically disqualify a candidate depth function but it surely points out a weakness that should be taken into account in a practical context (see Remark \ref{qwert} in this regard). Therefore, understanding the theoretical properties of each depth function is an important step to make an informed choice between them.

In \cite{primerarticulo}, we proposed a defining list of desirable properties for statistical depth in the fuzzy case. Unlike with multivariate data, many different distances between fuzzy sets are available. Thus we suggested a definition of depth which only depends on the algebraic operations between fuzzy sets (semilinear depth functions) as well as a metric dependent definition (geometric depth functions) and studied the relationships between them. While there are approaches to depth in abstract metric spaces \cite{NietoBattey,Cho,Sara}, our definition (see Properties P1--P4b below) was conceived with the specificities of fuzzy data in mind, and in particular it would make sense for (crisp) set-valued data as well. In this connection, using statistical depth for either fuzzy or set-valued data was also independently proposed by Cascos {\it et al.} \cite{Cascos} and Sinova \cite{Sinova}.

This paper is part of an ongoing program to develop depth-based methods for fuzzy data. In \cite{primerarticulo}, besides proposing an abstract list of desirable properties we studied a generalization of Tukey depth to the fuzzy setting and showed that it fulfils all those properties. Next it becomes necessary to establish whether some popular, relevant statistical depth functions also admit adaptations and whether their properties are preserved in this more general setting. 
Once a library of depth functions becomes available, comparing their performance for specific purposes will be possible.
In \cite{Simplicial} we studied several ways to adapt Liu's simplicial depth \cite{LiuSimplicial} and their properties. In this paper, projection depth and $L^r$-depth, initially defined in $\mathbb{R}^p,$ are similarly studied in the fuzzy setting. 

Projection depth  \cite[Example 2.4]{ZuoSerfling} of a point $x\in\mathbb{R}^p$ with respect to  the distribution of a random vector $X$  considers the projections of $x$ in every direction and compares them with the univariate median of the corresponding projection of the distribution. In that sense, it measures the worst case of outlyingness of $x$ with respect  to the median of the distribution in any direction. 

It is formally defined as $$PD(x; X) := \left(1+O\left(x; X\right)\right)^{-1},$$ with
\begin{equation}\label{Om}
O(x; X) := \sup_{u\in\mathbb{S}^{p-1}}\cfrac{\left|\langle x,u\rangle - \text{med}\left(\langle x, X\rangle\right)\right|}{\mbox{MAD}\left(\langle u,X \rangle\right)}.
\end{equation}
In \eqref{Om},  $\langle \cdot ,\cdot\rangle$ denotes the usual inner product in $\mathbb{R}^{p},$ and $\mathbb{S}^{p-1} := \{x\in\mathbb{R}^{p}: \|x\|\leq 1 \}$ the unit sphere, with $\|.\|$ the Euclidean norm on $\mathbb{R}^{p}$. Moreover, $\text{med}(Y)$ and $\mbox{MAD}(Y)$ denote the median and the median absolute deviation of a random variable $Y$. Notice the set of all medians  will be denoted by $\text{Med}(Y)$ and the usual convention of defining $\text{med}(Y)$ to be the midpoint of $\text{Med}(Y)$ applies.

The function $O,$ which measures the outlyingness of a point with respect to the median, is widely considered in the literature. For instance, in the univariate case it appears in \cite{mostellertukey} and the multivariate version was introduced in \cite{donohogasko}.

The {\em $L^{r}$-depth} \cite[Example 2.3]{ZuoSerfling} of  $x\in\mathbb{R}^p$ with respect to the distribution of a random vector $X$ is 
\begin{equation}\label{Lm}
L^{r}D(x; X) := \left(1 + \text{E}[\|x-X\|_{r}]\right)^{-1},
\end{equation}
where
$\text{E}[\cdot]$ denotes the expected value
and  $\|\cdot\|_{r}$ is the $r$-norm in $\mathbb{R}^{p}$ (the same notation will be used for the  $L^{r}$-norm in function spaces). The structure is similar to that of projection depth, but now the function $\text{E}[\|\cdot-X\|_{r}]$ measures the distance from a point to the distribution.

	This paper is organized as follows. Section \ref{prelim} contains the notation and basic results on fuzzy sets, fuzzy random variables and statistical depth. The generalization of  projection depth and the study of the desirable properties from \cite{primerarticulo}  is presented in Section \ref{Sp}. The different notions of $L^{r}$-type depths for fuzzy sets and the study of their properties are proposed in Section \ref{Sl}. An example of real fuzzy data is analyzed  in Section \ref{simulations}.  All proofs are deferred to Section \ref{proofs}. Some final remarks close the paper in Section \ref{discussion}.

\section{Notation and preliminaries}
\label{prelim}

\subsection{Fuzzy sets}
A function $A:\mathbb{R}^p\rightarrow [0,1]$ is called a {\em fuzzy set} on $\mathbb{R}^p$.
Let $\alpha\in (0,1]$, the {\em $\alpha$-level} of a fuzzy set $A$ is defined to be $A_{\alpha}:= \{x\in\mathbb{R}^{p}: A(x)\geq\alpha \}$ and $A_0 = \text{clo}(\{x\in\mathbb{R}^p: A(x) > 0\})$, where $\text{clo}(\cdot)$ denotes the closure of a set. By $\mathcal{F}_{c}(\mathbb{R}^{p})$ we denote the set of all fuzzy sets $A$ on $\mathbb{R}^{p}$ whose $\alpha$-level is a non-empty compact and convex set for each $\alpha\in [0,1]$. 
For simplicity, we will just refer to the elements of $\mathcal{F}_{c}(\mathbb{R}^{p})$ as fuzzy sets, although a general fuzzy set may not be in $\pfc(\R^p)$.

Let $\mathcal{K}_{c}(\mathbb{R}^{p})$ denote the class of all non-empty  compact and convex subsets of $\mathbb{R}^{p}$.
Any set $K\in\mathcal{K}_{c}(\mathbb{R}^{p})$ can be identified with a fuzzy set via its indicator function $\text{I}_{K} : \mathbb{R}^{p}\rightarrow [0,1]$, where $\text{I}_{K}(x) = 1$ if $x\in K$ and $\text{I}_{K}(x) = 0$ otherwise. For any $K\in\pkc(\R^p)$, define $\|K\|=\max_{x\in K}\|x\|$.

The {\em support function} of a fuzzy set $A$ is the mapping $s_{A}: \mathbb{S}^{p-1}\times [0,1]\rightarrow\mathbb{R}$ defined by
$
s_{A}(u,\alpha) := \sup_{v\in A_{\alpha}}\langle u,v\rangle,
$ 
for every $u\in\mathbb{S}^{p-1}$ and $\alpha\in [0,1].$

	In $\mathcal{F}_{c}(\mathbb{R})$ it is common to use \textit{trapezoidal fuzzy numbers} (e.g., \cite[Section 10.7]{trape}). For any real numbers $a\leq b\leq c \leq d,$  the fuzzy set given by
	
	\begin{equation}\label{trapezoidal}
		\mbox{Tra}(a,b,c,d)(x) := \left\{ \begin{array}{lcc}
			\cfrac{x - a}{b-a}  & \text{ if } x\in [a,b),\\
			\\1& \text{ if } x\in [b,c] ,\\
			\\ \cfrac{x - c}{d-c}  &  \text{ if } x\in (c,d],\\ 
			\\ 0 & \text{otherwise}
		\end{array}
		\right.
	\end{equation}
is called a trapezoidal fuzzy number.

\subsection{Arithmetics and Zadeh's extension principle}\label{Aarith}
Let $A,B\in\mathcal{F}_{c}(\mathbb{R}^{p})$ and $\gamma\in\mathbb{R}$. According to \cite{zadehfuzzysets}, the operations \textit{sum} and \textit{product by a scalar} are defined by
\begin{equation}\nonumber
	(A + B)(t) := \sup_{x,y\in\mathbb{R}^{p}: \mbox{ } x + y = t} \min\{A(x), B(y) \}, \text{ with } t\in\mathbb{R}^{p},
\end{equation}
\begin{equation}\nonumber
	(\gamma\cdot A)(t) := \sup_{x\in\mathbb{R}^{p} : \mbox{ } t = \gamma\cdot x} A(x) = \left\{
	\begin{array}{ll}
		A\left(\frac{t}{\gamma}\right),     & \mathrm{if\ } \gamma\neq 0 \\
		\\
		I_{\{0\}}(t) & \mathrm{if\ } \gamma = 0
	\end{array},\text{ with } t\in\mathbb{R}^{p}.
	\right.
\end{equation}
Given $A,B\in\mathcal{F}_{c}(\mathbb{R}^{p})$, $\gamma\in [0,\infty)$, $u\in\mathbb{S}^{p-1}$ and $\alpha\in [0,1],$  a useful  relationship between the support function and these operations 
is the formula
\begin{equation}\label{soportesuma}
	s_{A+\gamma\cdot B}(u,\alpha) = s_{A}(u,\alpha) + \gamma\cdot s_{B}(u,\alpha).
\end{equation}

The $(\midd/\spr)$-decomposition is a commonly used tool to deal with support functions of fuzzy sets. Given $A\in\mathcal{F}_{c}(\mathbb{R}^{p})$ and $s_{A}$ the support function of $A$, it can be expressed as

\begin{equation}\label{sumasoporte}
	s_{A}(u,\alpha) = \midd (s_{A})(u,\alpha) + \spr(s_{A})(u,\alpha),
\end{equation}

where, for all $u\in\mathbb{S}^{p-1}$ and $\alpha\in[0,1],$

\begin{equation}\label{midspr}
	\begin{aligned}
		\midd (s_{A})(u,\alpha) := (s_{A}(u,\alpha) - s_{A}(-u,\alpha))/2,\\ 
		\spr (s_{A})(u,\alpha) := (s_{A}(u,\alpha) + s_{A}(-u,\alpha))/2.
	\end{aligned}
\end{equation}

 A function $f:\pfc(\R^p)\to \R$ is {\em convex} if $$f(\lambda\cdot A+(1-\lambda)\cdot B)\le \lambda\cdot f(A)+(1-\lambda)\cdot f(B)$$ for all $\lambda\in[0,1]$ and $A,B\in\pfc(\R^p)$.

Zadeh's extension principle  \cite{zadehextension} allows to apply a crisp function  $f: \mathbb{R}^{p}\rightarrow\mathbb{R}^{p}$ to a fuzzy set $A\in\mathcal{F}_{c}(\mathbb{R}^{p}),$ obtaining a new fuzzy set $f(A)\in\mathcal{F}_{c}(\mathbb{R}^{p})$  with $$f(A)(t) := \sup\{A(y) : y\in\mathbb{R}^{p}, f(y) = t \}$$
for all $t\in\mathbb{R}^{p}$.

	Let $M\in\mathcal M_{p\times p}(\mathbb R)$ be a regular matrix, $A\in\mathcal F_c(\mathbb R^p)$ a fuzzy set and let $f:\mathbb R^p\rightarrow\mathbb R^p$ be the function given by $f(x) = M\cdot x$. 
	The application of Zadeh's extension principle results in the fuzzy set $f(A) = M \cdot A$ defined as
	$$ 
	(M\cdot A)(t) = \sup\{A(y) : y\in\mathbb R^p, M\cdot y = t\}
	$$

From \cite[Proposition 7.2]{primerarticulo},
\begin{equation}\label{pa}
	s_{M\cdot A}(u,\alpha) = \left\|M^{T}\cdot u\right\|\cdot s_{A}\left(\cfrac{1}{\left\|M^{T}\cdot u\right\|}\cdot M^{T}\cdot u,\alpha\right)
\end{equation}
for any $A\in\mathcal{F}_{c}(\mathbb{R}^{p}),$  $M\in\mathcal{M}_{p\times p}(\mathbb{R})$ a regular matrix, $u\in\mathbb{S}^{p-1}$ and $\alpha\in [0,1].$

\subsection{Metrics between fuzzy sets}
\label{MFS}
Given fuzzy sets $A,B\in\mathcal{F}_{c}(\mathbb{R}^{p})$, define
\begin{equation}\label{ddr}
	d_{r}(A,B) := \left\{ \begin{array}{lcc}
		\left(\int_{[0,1]} \left( d_H (A_{\alpha},B_{\alpha}) \right)^{r} \dif\nu (\alpha)\right)^{1/r}, &  r\in [1,\infty)\\
		\\ \sup_{\alpha\in [0,1]} d_H  (A_{\alpha},B_{\alpha}), &   r = \infty,
	\end{array}
	\right.
\end{equation}

where 
$$d_H (S,T) := \max\left\{\sup_{s\in S}\inf_{t\in T} \parallel s-t\parallel, \sup_{t\in T}\inf_{s\in S}\parallel s-t\parallel\right\}$$ 
is the {\em Hausdorff metric}  between elements of $\mathcal{K}_{c}(\mathbb{R}^{p})$ and $\nu$ denotes the Lebesgue measure over $[0,1].$ 
  The metric space $(\mathcal{F}_{c}(\mathbb{R}^{p}), d_{r})$ is separable and non-complete for any $r\in(1,\infty)$, while the metric space $(\mathcal{F}_{c}(\mathbb{R}^{p}), d_{\infty})$ is non-separable and complete  \cite{diamondkloden}.

$L^{r}$-type metrics can be considered using the support function \cite{diamondkloden}. Given $A,B\in\mathcal{F}_{c}(\mathbb{R}^{p})$  and $r\geq 1,$
\begin{equation}\label{rhor}
	\rho_{r}(A,B) := \left(\int_{[0,1]}\int_{\mathbb{S}^{p-1}}|s_{A}(u,\alpha)-s_{B}(u,\alpha)|^{r}\dif\mathcal{V}_{p}(u)\, \dif\nu(\alpha)\right)^{1/r},
\end{equation}
where $\mathcal{V}_{p}$ denotes the normalized Haar measure in $\mathbb{S}^{p-1}$.

\subsection{Fuzzy random variables}\label{Frv}

Let $(\Omega,\mathcal{A})$ be a measurable space.
A function $\Gamma:\Omega\rightarrow\mathcal{K}_{c}(\mathbb{R}^{p})$ is a \emph{random compact set} \cite{Mol} if $\{\omega\in\Omega : \Gamma(\omega)\cap K\neq\emptyset \}\in\mathcal{A}$ for all $K\in\mathcal{K}_{c}(\mathbb{R}^{p})$, or equivalently if $\Gamma$ is Borel measurable with respect to the Hausdorff metric.
According to \cite{PuriRalescu}, a function $\mathcal{X}:\Omega\rightarrow\mathcal{F}_{c}(\mathbb{R}^{p})$ is called a \emph{fuzzy random variable} if the $\alpha$-level $\mathcal{X}_{\alpha}(\omega)$ is a random compact set for all $\alpha\in [0,1]$ where $\mathcal{X}_{\alpha}:\Omega\rightarrow\mathcal{K}_{c}(\mathbb{R}^{p})$ is defined as
$\mathcal{X}_{\alpha}(\omega) := \{x\in\mathbb{R}^{p}: \mathcal{X}(\omega)(x)\geq\alpha  \}$ for any $\omega\in\Omega$.

Let us denote by $L^{0}[\mathcal{F}_{c}(\mathbb{R}^{p})]$ the class of all fuzzy random variables on  $(\Omega,\mathcal{A})$. For any $r\in[1,\infty),$ we denote by $L^r[\pfc(\R^p)]$ the subset of  fuzzy random variables in $L^{0}[\mathcal{F}_{c}(\mathbb{R}^{p})]$  such that $E[\|\mathcal X_0\|^r]<\infty.$ Fuzzy random variables in $L^1[\pfc(\R^p)]$ are called {\em integrably bounded}.

The support function of a fuzzy random variable $\mathcal{X}$ is the function $s_{\mathcal{X}} : \mathbb{S}^{p-1}\times [0,1]\times\Omega\rightarrow\mathbb{R}$ with
$
s_{\mathcal{X}}(u,\alpha,\omega) := s_{\mathcal{X}(\omega)}(u,\alpha)
$
for all $u\in\mathbb{S}^{p-1}, \alpha\in [0,1]$ and $\omega\in\Omega$.
Throughout the paper, the probability space associated with a fuzzy random variable is denoted by $(\Omega,\mathcal{A},\mathbb{P})$.

\subsection{Symmetry and depth: semilinear and geometric notions}\label{notiondepth}

 In \cite{primerarticulo}, we proposed two notions of symmetry in the fuzzy setting, the $F$-symmetry notion, based in the support function, and the $(\midd,\spr)$-notion, based on the $(\midd,\spr)$-decomposition. 
Given a fuzzy random variable $\mathcal{X}:\Omega\rightarrow\mathcal{F}_{c}(\mathbb{R}^{p})$ and a fuzzy set $A\in\mathcal{F}_{c}(\mathbb{R}^{p})$, 

\begin{itemize}
	\item
	$\mathcal{X}$ is \emph{$F$-symmetric} with respect to $A$ if
	\begin{equation}\nonumber
		s_{A}(u,\alpha) - s_{\mathcal{X}}(u,\alpha) =^{d} s_{\mathcal{X}}(u,\alpha) - s_{A}(u,\alpha),
	\end{equation}
for all $(u,\alpha)\in\mathbb{S}^{p-1}\times[0,1],$ where $=^{d}$ represents being equal in distribution.
\item $\mathcal{X}$ is said to be $(\midd,\spr)$\emph{-symmetric} with respect to $A$ if
		\begin{equation}
			\begin{aligned} \nonumber
				\midd (s_{A}(u,\alpha)) - \midd (s_{\mathcal{X}}(u,\alpha)) &=^{d} \midd (s_{\mathcal{X}}(u,\alpha)) - \midd (s_{A}(u,\alpha)) \mbox{ and }\\
				\nonumber
				\spr (s_{A}(u,\alpha)) - \spr (s_{\mathcal{X}}(u,\alpha)) &=^{d} \spr (s_{\mathcal{X}}(u,\alpha)) - \spr (s_{A}(u,\alpha)).
			\end{aligned}
	\end{equation}
for all $(u,\alpha)\in\mathbb{S}^{p-1}\times[0,1]$.
\end{itemize}

There it is also proved that, for all $u\in\mathbb{S}^{p-1}$ and $\alpha\in [0,1],$
\begin{eqnarray}\label{Amedian}
	s_{A}(u,\alpha) \in \text{Med}(s_{\mathcal{X}}(u,\alpha))  \mbox{ if }\mathcal{X}  \mbox{ is } F\mbox{-symmetric with respect to }A
\end{eqnarray}
and		
	\begin{eqnarray}\label{lemamedian}			
		\midd (s_{A})(u,\alpha)\in \text{Med}(\midd (s_{\mathcal{X}})(u,\alpha))  \mbox{ and } \spr (s_{A})(u,\alpha)\in \text{Med}(\spr (s_{\mathcal{X}})(u,\alpha))
	\end{eqnarray}
	if $\mathcal{X}$ is $(\midd,\spr)$-symmetric with respect to $A$.

 In \cite{primerarticulo}, we introduced the following two abstract definitions of a statistical depth function for fuzzy data. Let us consider $\mathcal H\subseteq L^0[\pfc(\R^p)],$  $\mathcal{J}\subseteq\mathcal{F}_{c}(\mathbb{R}^{p})$ and a mapping  $D(\cdot;\cdot):\mathcal{J}\times
{\mathcal H}
\rightarrow[0,\infty).$ 
Let $A\in\mathcal{J}$ be such that  $D(A;\mathcal{X}) = \sup \{D(B;\mathcal{X}) : B\in\mathcal{J}\}$ and let $d:\mathcal{F}_{c}(\mathbb{R}^{p})\times\mathcal{F}_{c}(\mathbb{R}^{p})\rightarrow[0,\infty)$ be a metric.
Consider the following properties, which are required to hold for any such $A$.

\begin{enumerate}
	\item[{\bf P1}.] $D(M\cdot U + V; M\cdot\mathcal{X} + V) = D(U;\mathcal{X})$  for any regular matrix $M\in\mathcal{M}_{p\times p}(\mathbb{R}),$ any $U,V\in\mathcal{J}$ and any $\mathcal{X}\in{\mathcal{H}}.$
	\item[{\bf P2}.] For any symmetric fuzzy random variable  $\mathcal{X}\in{\mathcal H}$ (for some notion of symmetry),
	$D(U;\mathcal{X}) = \sup_{B\in\mathcal{F}_{c}(\mathbb{R}^{p})} D(B;\mathcal{X}),$ where $U\in\mathcal{J}$ is a center of symmetry of $\mathcal{X}.$
	\item[{\bf P3a}.]
	$
	D(A;\mathcal{X})\geq D((1-\lambda)\cdot A + \lambda\cdot U;\mathcal{X})\geq D(U;\mathcal{X})
	$
	for all $\lambda\in[0,1]$ and all $U\in\mathcal{F}_{c}(\mathbb{R}^{p})$.
	\item[{\bf P3b}.] $
	D(A;\mathcal{X})\geq D(U;\mathcal{X})\geq D(V;\mathcal{X})$		for all $B,C\in\mathcal{J}$ satisfying $d(A,V) = d(A,U) + d(U,V)$.
	\item[{\bf P4a}.] $
	\lim_{\lambda\rightarrow\infty} D(A + \lambda\cdot U;\mathcal{X}) = 0
	$ 	for all $U\in\mathcal{J}\setminus\{\text{I}_{\{0\}}\}$.
	\item[{\bf P4b}.] $
	\lim_{n\rightarrow\infty} D(A_{n};\mathcal{X}) = 0$ for every sequence  $\{ A_{n}\}_{n},$ with $A_n\in\mathcal{J}$ for all $n\in\mathbb{N},$
	 such that $d(A_{n},A) \to \infty$.
\end{enumerate}
These properties adapt to the specificities of fuzzy data the defining properties of a statistical depth function in multivariate analysis \cite{ZuoSerfling}.
As defined in \cite{primerarticulo},
 $D$  is  a \emph{semilinear depth function} if it satisfies P1, P2, P3a and P4a.
It  is  a \emph{geometric depth function} with respect to a metric $d$  if it satisfies P1, P2, P3b and P4b for that metric.

\subsection{Banach spaces}

A {\em Banach space} is a real normed space $(\E,\|\cdot\|)$ whose induced metric is complete.

\begin{definition}[\label{strictbanach}]
	Let $(\mathbb E,\|\cdot\|)$ be a Banach space. It is said to be {\em strictly convex} if $x = y$ whenever $\|(1/2)\cdot (x+y)\| = \|x\| = \|y\|$ for every $x,y\in\mathbb E$.
\end{definition}

The Cartesian product $\E\times\F$ of two Banach spaces $(\E,\|\cdot\|_\E)$ and $(\F,\|\cdot\|_\F)$ can be endowed with an $r$-norm
$$\|(x,y)\|_r=(\|x\|_\E^r+\|y\|_\F^r)^{1/r}.$$
The resulting Banach space is denoted by $\E\oplus_r\F$.

\section{Projection depth and its properties}
\label{Sp}\label{[[3]]}

\subsection{Definition}
In this section, we introduce a statistical depth function inspired by  multivariate projection depth. We extend the notion of projection depth by replacing in \eqref{Om} the product functionals $\langle u,\cdot\rangle$ by the support functionals $s_\cdot(u,\alpha)$. A rationale for this adaptation is given in \cite[Section 6]{primerarticulo}.

\begin{definition}
	The \emph{projection depth} based on $\mathcal{J}\subseteq\mathcal{F}_{c}(\mathbb{R}^{p})$  and $ \mathcal{H}\subseteq L^{0}[\mathcal{F}_{c}(\mathbb{R}^{p})]$ of a fuzzy set $A\in\mathcal{J}$ with respect to a fuzzy random variable  $\mathcal{X}\in\mathcal{H}$ is $$D_{FP}(A;\mathcal{X}) := \left(1 + O\left(A;\mathcal{X}\right)\right)^{-1},$$ where
	\begin{equation}\label{O}
		O(A;\mathcal{X}) := \sup_{u\in\mathbb{S}^{p-1}, \alpha\in [0,1]}\cfrac{|s_{A}(u,\alpha) - \text{Med}(s_A(u,\alpha))|}{\text{MAD}(s_{\mathcal{X}}(u,\alpha))}.
	\end{equation}
The usual convention of taking the mid-point of the interval of medians when the median is not unique is adopted, both in the numerator and the denominator.
\end{definition}
We consider the particular case of the function $D_{FP}$ based on $$\mathcal{J}=\left\{\text{I}_{\{x\}}\in\mathcal{F}_{c}(\mathbb{R}^{p}): x\in\mathbb{R}^{p}\right\},$$
showing $D_{FP}$ generalizes multivariate projection depth.
\begin{proposition}\label{promult}
	Let $\mathcal J = \{\text{I}_{\{x\}} : x\in\mathbb R^p\}$. For any random vector $X$ on $\mathbb{R}^{p}$  and any $x\in\mathcal J,$  
$$D_{FP}\left(\text{I}_{\{x\}};\text{I}_{\{X\}}\right) = PD(x;X).$$
\end{proposition}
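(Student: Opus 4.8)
The plan is to reduce the claimed identity to the level of the outlyingness functionals. Since both depths have the form $(1+O)^{-1}$, it suffices to prove that $O(\text{I}_{\{x\}};\text{I}_{\{X\}}) = O(x;X)$, and the bridge between the fuzzy functional \eqref{O} and the multivariate functional \eqref{Om} is the support function of a singleton indicator.

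First I would compute the support function of $\text{I}_{\{x\}}$. The $\alpha$-level of $\text{I}_{\{x\}}$ is the singleton $\{x\}$ for every $\alpha\in(0,1]$, and the zero-level is $(\text{I}_{\{x\}})_0 = \text{clo}(\{x\}) = \{x\}$ as well; hence from $s_A(u,\alpha) = \sup_{v\in A_\alpha}\langle u,v\rangle$ one gets $s_{\text{I}_{\{x\}}}(u,\alpha) = \langle u,x\rangle$ for all $u\in\mathbb{S}^{p-1}$ and all $\alpha\in[0,1]$. The crucial feature, which drives the whole argument, is that this value is constant in $\alpha$.

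Applying the same computation pointwise in $\omega$ to $\mathcal{X} = \text{I}_{\{X\}}$ gives $s_{\mathcal{X}}(u,\alpha,\omega) = \langle u,X(\omega)\rangle$, so that for each fixed $(u,\alpha)$ the real random variable $s_{\mathcal{X}}(u,\alpha)$ coincides with $\langle u,X\rangle$. Its median and median absolute deviation are then exactly $\text{med}(\langle u,X\rangle)$ and $\text{MAD}(\langle u,X\rangle)$; here one must note that the midpoint-of-medians convention fixed in the definition of $D_{FP}$ is the same one used in \eqref{Om}, so the two medians agree and no ambiguity survives. Substituting these identities into \eqref{O}, the quotient at each $(u,\alpha)$ becomes $|\langle u,x\rangle - \text{med}(\langle u,X\rangle)|\,/\,\text{MAD}(\langle u,X\rangle)$, independent of $\alpha$.

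Consequently the supremum over $\mathbb{S}^{p-1}\times[0,1]$ collapses to a supremum over $\mathbb{S}^{p-1}$ alone, which is precisely $O(x;X)$ as written in \eqref{Om}; taking reciprocals of $1+O$ then yields $D_{FP}(\text{I}_{\{x\}};\text{I}_{\{X\}}) = PD(x;X)$. I do not expect any serious obstacle here, as the argument is a direct computation; the only points requiring care are verifying the $\alpha\to 0$ closure case so that $s_{\text{I}_{\{x\}}}$ is genuinely constant across all of $[0,1]$, and confirming that the median conventions in the two definitions coincide so that the numerators match exactly.
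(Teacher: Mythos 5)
Your proof is correct and is essentially the paper's own argument: the paper proves the proposition in one line from the identity $s_{\text{I}_{\{x\}}}(u,\alpha) = \langle u,x\rangle$ for all $u\in\mathbb{S}^{p-1}$ and $\alpha\in[0,1]$, which is precisely the key fact you establish and then exploit to collapse the supremum over $(u,\alpha)$ to one over $u$. Your additional checks (the $\alpha=0$ closure case and the matching median conventions) are sound but just spell out what the paper leaves implicit.
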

The proof follows directly from the fact that $s_{A}(u,\alpha) = \langle u, x\rangle$ for any   $A = \text{I}_{\{x\}}$, $u\in\mathbb{S}^{p-1}$ and $\alpha\in [0,1].$

 \subsection{Properties}

We will now show that projection depth, like Tukey depth \cite{primerarticulo}, is both a semilinear depth function and a geometric depth function.

\begin{theorem}\label{theoremprojectionsemilinear}
	$D_{FP}$ satisfies  properties P1, P2 with $F$-symmetry,
 P3a and P4a. Moreover, it satisfies
 P3b for $\rho_r$ if $r\in(1,\infty)$ and
P4b for $\rho_{r}$ if $r\in [1,\infty)$ and $d_{r}$ if $r\in [1,\infty].$
\end{theorem}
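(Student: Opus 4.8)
The plan is to verify each property separately, exploiting the structure $D_{FP}(A;\mathcal{X}) = (1+O(A;\mathcal{X}))^{-1}$, which is a decreasing transformation of the outlyingness functional $O$. Since $t\mapsto (1+t)^{-1}$ is strictly decreasing on $[0,\infty)$, maximizing depth is equivalent to minimizing $O$, and every inequality between depths translates into the reverse inequality between outlyingness values. Thus the work reduces to establishing the corresponding statements for $O(A;\mathcal{X}) = \sup_{u,\alpha} |s_A(u,\alpha) - \mathrm{med}(s_{\mathcal{X}}(u,\alpha))| / \mathrm{MAD}(s_{\mathcal{X}}(u,\alpha))$.

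For P1 (affine invariance), I would use formula \eqref{pa} for $s_{M\cdot A}$ together with \eqref{soportesuma} for the translation by $V$. Writing $w = M^T u / \|M^T u\|$, the transformation $M\cdot\mathcal{X}+V$ sends $s_{\mathcal{X}}(w,\alpha)$ to $\|M^T u\|\, s_{\mathcal{X}}(w,\alpha) + s_V(u,\alpha)$; since median and MAD are equivariant under affine maps of the real line (the additive constant $s_V$ cancels in the numerator, and the positive scaling $\|M^T u\|$ cancels between numerator and denominator), each ratio is unchanged, and the supremum is preserved because $u\mapsto M^T u/\|M^T u\|$ is a bijection of $\mathbb{S}^{p-1}$. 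For P2 with $F$-symmetry, I would invoke \eqref{Amedian}: if $\mathcal{X}$ is $F$-symmetric with respect to $A$, then $s_A(u,\alpha)\in\mathrm{Med}(s_{\mathcal{X}}(u,\alpha))$ for all $(u,\alpha)$, so the numerator vanishes identically, giving $O(A;\mathcal{X})=0$ and hence the maximal depth value $1$. For P3a I would show $O$ is quasiconvex along the segment from $A$ to $U$: using \eqref{soportesuma}, the numerator $|s_{(1-\lambda)A+\lambda U}(u,\alpha)-\mathrm{med}(s_{\mathcal{X}}(u,\alpha))|$ is a convex function of $\lambda$ (a convex combination of the endpoint numerators), so each ratio, and therefore their supremum, lies below the larger endpoint value; this yields the monotonicity $O(A;\mathcal{X})\le O((1-\lambda)A+\lambda U;\mathcal{X})\le O(U;\mathcal{X})$. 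For P4a I would observe that along $A+\lambda\cdot U$ the numerator grows linearly in $\lambda$ (for at least one direction $u$ where $s_U(u,\alpha)\neq 0$, which exists since $U\neq \mathrm{I}_{\{0\}}$), while the denominator is fixed, forcing $O\to\infty$ and hence $D_{FP}\to 0$.

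The genuinely delicate properties are the metric ones, P3b and P4b. The main obstacle is translating statements phrased in terms of $\rho_r$ or $d_r$ back into uniform control of the support-function difference appearing in $O$, because $O$ uses a supremum over $(u,\alpha)$ whereas $\rho_r$ is an integral. For P3b I would first note that $\rho_r$ is induced by the $L^r$-norm on support functions, so the betweenness condition $\rho_r(A,V)=\rho_r(A,U)+\rho_r(U,V)$ is an equality in the triangle inequality for an $L^r$-norm; when $r\in(1,\infty)$ the space $L^r$ is strictly convex (this is where Definition \ref{strictbanach} and the restriction $r>1$ enter, and why $r=1$ is excluded), which forces $s_U$ to lie pointwise on the segment between $s_A$ and $s_V$, i.e. $s_U(u,\alpha)=(1-t(u,\alpha))s_A(u,\alpha)+t(u,\alpha)s_V(u,\alpha)$ for a measurable $t$ valued in $[0,1]$. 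I would then combine this pointwise betweenness with the convexity-of-numerator argument from P3a to obtain the desired ordering of $O$-values and hence of depths.

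For P4b I would argue by contradiction or by extracting a lower bound: if $d_r(A_n,A)\to\infty$, I must produce directions $(u_n,\alpha_n)$ along which the numerator $|s_{A_n}(u_n,\alpha_n)-\mathrm{med}(s_{\mathcal{X}}(u_n,\alpha_n))|$ diverges while the corresponding MAD stays bounded away from both $0$ and $\infty$. The relation $d_r(A_n,A)\to\infty$ controls an average (over $\alpha$, via \eqref{ddr}) of Hausdorff distances, and through the identity relating $d_H$ on levels to the sup-over-$u$ of support-function differences I would extract, for each $n$, some level and direction where $|s_{A_n}-s_A|$ is large; I expect the case $d_\infty$ to be the cleanest since it directly gives a uniform (in $\alpha$) large support-function gap, while for finite $r$ I would need an additional argument to upgrade the integral divergence to a pointwise divergence of the relevant ratio, possibly using that $\mathrm{MAD}(s_{\mathcal{X}}(u,\alpha))$ is bounded uniformly over the compact index set $\mathbb{S}^{p-1}\times[0,1]$. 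This uniform boundedness of the denominator, and the continuity/measurability needed to make the supremum behave well, is where I anticipate the bulk of the technical effort.
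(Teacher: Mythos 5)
Your P1, P3a and P4a arguments are sound and close to the paper's (the paper proves P1 via the bijection $u\mapsto M^{T}u/\|M^{T}u\|$ and \eqref{pa}, P3a via convexity of $O(\cdot;\mathcal{X})$, and P4a by deducing it from P4b through Lemma \ref{teorema4b}, whereas your direct linear-growth argument also works). Your P3b is essentially a re-derivation of the paper's Lemma \ref{teorema3b}; note that strict convexity in fact gives the stronger conclusion that $s_U=(1-\lambda)s_A+\lambda s_V$ for a \emph{constant} $\lambda$ (equality in Minkowski's inequality forces positive proportionality of $s_A-s_U$ and $s_U-s_V$), and that one must still upgrade the resulting a.e.\ identity to all $(u,\alpha)$, since $O$ is a supremum rather than an essential supremum; the paper's lemma concludes $U=(1-\lambda)\cdot A+\lambda\cdot V$ as fuzzy sets, which settles that point. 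However, there are two genuine gaps.

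First, P2. You invoke \eqref{Amedian}, which only yields $s_A(u,\alpha)\in\text{Med}(s_{\mathcal{X}}(u,\alpha))$, and conclude that ``the numerator vanishes identically.'' That inference fails when $\text{Med}(s_{\mathcal{X}}(u,\alpha))$ is a nondegenerate interval: by the stated convention, the numerator of $O$ is the distance from $s_A(u,\alpha)$ to the \emph{midpoint} of that interval, and an arbitrary element of the median set need not be the midpoint. What is needed---and what the paper proves as Lemma \ref{teoremasimetriaReales}, the nontrivial half of whose proof is exactly this point---is that a center of symmetry of a real random variable is necessarily the midpoint of its median interval. With that lemma your argument closes; without it, $O(A;\mathcal{X})=0$ is unproved.

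Second, P4b, where there are two problems. (i) You have the reduction backwards: you treat $d_\infty$ as the clean case and anticipate extra work ``to upgrade the integral divergence to a pointwise divergence'' for finite $r$. But since $\rho_r\le d_\infty$ and $d_r\le d_\infty$, divergence of $\rho_r(A_n,A)$ or of $d_r(A_n,A)$ \emph{implies} $d_\infty(A_n,A)\to\infty$; the hypothesis gets stronger, not weaker, so the $d_\infty$ case is the only one requiring an argument and the finite-$r$ cases follow in one line. (ii) In the $d_\infty$ case you work with directions $(u_n,\alpha_n)$ varying with $n$, and therefore need $\text{med}(s_{\mathcal{X}}(u,\alpha))$ and $\text{MAD}(s_{\mathcal{X}}(u,\alpha))$ uniformly bounded over $\mathbb{S}^{p-1}\times[0,1]$; you attribute this to compactness of the index set, which is not a valid reason, as neither quantity is continuous in $(u,\alpha)$ in general (no distributional assumptions are made on $\mathcal{X}$). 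A correct uniform bound does exist---from $|s_{\mathcal{X}}(u,\alpha)|\le\|\mathcal{X}_0\|$ one can dominate both $|\text{med}(s_{\mathcal{X}}(u,\alpha))|$ and $\text{MAD}(s_{\mathcal{X}}(u,\alpha))$ by quantities built from medians of $\|\mathcal{X}_0\|$---but it must be proved, and your sketch leaves precisely this as ``anticipated effort.'' The paper avoids the issue altogether: from $d_\infty(A_n,\text{I}_{\{0\}})=\sup\{\|x\|:x\in A_{n,0}\}\to\infty$ it picks maximizers $x_n$ and then a \emph{fixed} standard-basis direction $e_i$ with $s_{A_n}(e_i,0)\ge\langle e_i,x_n\rangle\to\infty$ (along subsequences), so that only the single fixed pair $(e_i,0)$ enters the bound and its median and MAD are just two finite constants.
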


\begin{corollary}\label{coropro}
	When using the $F$-symmetry notion, $D_{FP}$ is a semilinear depth function and a geometric depth function for the $\rho_{r}$ distance for any $r\in (1,\infty)$.
\end{corollary}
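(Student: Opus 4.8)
The plan is to reduce each property to a statement about the outlyingness functional $O(\cdot;\mathcal{X})$, exploiting that $D_{FP}=(1+O)^{-1}$ is strictly decreasing in $O\ge 0$: maximising $D_{FP}$ is minimising $O$, and the monotonicity/vanishing assertions for $D_{FP}$ are equivalent to monotonicity/divergence of $O$. Write $m(u,\alpha)$ for the midpoint of $\text{Med}(s_{\mathcal{X}}(u,\alpha))$ and $\sigma(u,\alpha)=\text{MAD}(s_{\mathcal{X}}(u,\alpha))$, so that $O(B;\mathcal{X})=\sup_{u,\alpha}|s_B(u,\alpha)-m(u,\alpha)|/\sigma(u,\alpha)$. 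The key fact, which powers P3a and P3b, is that $B\mapsto O(B;\mathcal{X})$ is convex on $\pfc(\R^p)$: since $s_{(1-\lambda)\cdot A+\lambda\cdot U}=(1-\lambda)s_A+\lambda s_U$ by \eqref{soportesuma}, the triangle inequality applied to the numerator at each $(u,\alpha)$ bounds the integrand by $(1-\lambda)|s_A-m|/\sigma+\lambda|s_U-m|/\sigma$, and taking suprema gives $O((1-\lambda)\cdot A+\lambda\cdot U;\mathcal{X})\le(1-\lambda)O(A;\mathcal{X})+\lambda O(U;\mathcal{X})$.

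For P1 I would evaluate $O(M\cdot U+V;M\cdot\mathcal{X}+V)$ via \eqref{soportesuma} and \eqref{pa}. Writing $v=M^{T}u/\|M^{T}u\|$, the additive part $s_V(u,\alpha)$ cancels from the numerator by translation-equivariance of the median and drops out of the denominator by translation-invariance of the MAD, while the factor $\|M^{T}u\|>0$ rescales median and MAD by the same positive constant; hence the ratio at $(u,\alpha)$ equals the ratio of the original problem at $(v,\alpha)$. As $u\mapsto v$ is a bijection of $\mathbb{S}^{p-1}$ (because $M^{T}$ is regular), the two suprema coincide and P1 follows. For P2 with $F$-symmetry, the defining identity says $s_{\mathcal{X}}(u,\alpha)$ is symmetric about $s_U(u,\alpha)$; a real distribution symmetric about a point has its median-set symmetric about that point, so its midpoint equals $s_U(u,\alpha)$, refining \eqref{Amedian} to $m(u,\alpha)=s_U(u,\alpha)$. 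The numerator then vanishes identically, $O(U;\mathcal{X})=0$, and $D_{FP}(U;\mathcal{X})=1$ is the global maximum.

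Properties P3a and P4a are then short. For P3a, convexity gives $O((1-\lambda)\cdot A+\lambda\cdot U;\mathcal{X})\le(1-\lambda)O(A;\mathcal{X})+\lambda O(U;\mathcal{X})\le O(U;\mathcal{X})$ once $O(A;\mathcal{X})\le O(U;\mathcal{X})$, which holds because $A$ minimises $O$; the companion inequality $O(A;\mathcal{X})\le O((1-\lambda)\cdot A+\lambda\cdot U;\mathcal{X})$ is again minimality of $A$. For P4a, since $U\ne\text{I}_{\{0\}}$ its support function is not identically zero, so I fix $(u_0,\alpha_0)$ with $s_U(u_0,\alpha_0)\ne 0$; then $|s_A(u_0,\alpha_0)+\lambda s_U(u_0,\alpha_0)-m(u_0,\alpha_0)|/\sigma(u_0,\alpha_0)\to\infty$, forcing $O(A+\lambda\cdot U;\mathcal{X})\to\infty$ and $D_{FP}\to 0$.

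The geometric properties carry the real work. For P3b I would use that $\rho_r$ is exactly the $L^{r}(\mathbb{S}^{p-1}\times[0,1])$-distance between support functions and that this space is strictly convex for $r\in(1,\infty)$; the hypothesis $\rho_r(A,V)=\rho_r(A,U)+\rho_r(U,V)$ is equality in the triangle inequality, which in a strictly convex space forces $s_U-s_A$ and $s_V-s_U$ to be non-negative multiples of one another, whence $s_U=(1-t)s_A+ts_V$ and therefore $U=(1-t)\cdot A+t\cdot V$ for some $t\in[0,1]$. Convexity of $O$ together with $O(A;\mathcal{X})\le O(V;\mathcal{X})$ then gives $O(A;\mathcal{X})\le O(U;\mathcal{X})\le O(V;\mathcal{X})$, the required chain. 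This conversion of a metric betweenness identity into the algebraic segment identity is the main obstacle, and it is precisely what fails at $r\in\{1,\infty\}$, where $L^{r}$ is not strictly convex. For P4b I would argue by contradiction: if $D_{FP}(A_n;\mathcal{X})\not\to 0$ then $O(A_{n_k};\mathcal{X})\le M$ along a subsequence, so $|s_{A_{n_k}}(u,\alpha)|\le|m(u,\alpha)|+M\sigma(u,\alpha)$ for all $(u,\alpha)$. Because $\|\mathcal{X}_0\|<\infty$ almost surely, choosing a high enough quantile of $\|\mathcal{X}_0\|$ bounds both $m$ and $\sigma$ uniformly in $(u,\alpha)$ by constants, so $s_{A_{n_k}}$ is uniformly bounded; this keeps $\rho_r(A_{n_k},A)$ bounded, and via $d_H(K,L)=\sup_u|s_K(u)-s_L(u)|$ also $d_r(A_{n_k},A)$ bounded, contradicting $d(A_n,A)\to\infty$. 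As only almost-sure finiteness is used, this covers $\rho_r$ for $r\in[1,\infty)$ and $d_r$ for $r\in[1,\infty]$.
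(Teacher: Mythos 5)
Your proof is correct, and its overall skeleton is the paper's: there, the corollary is an immediate consequence of Theorem \ref{theoremprojectionsemilinear}, whose proof verifies exactly the properties you verify, and your arguments for P1 (cancellation of the translation, positive homogeneity of median and MAD, change of variable $u\mapsto M^{T}u/\|M^{T}u\|$), P2 (symmetry forces the median midpoint to equal $s_U$, hence $O(U;\mathcal{X})=0$ and $D_{FP}(U;\mathcal{X})=1$), and P3a (convexity of $O$ in its first argument) coincide with the paper's. Two steps take genuinely different routes. For P3b, the paper cites Lemma \ref{teorema3b} (imported from the earlier paper) to turn metric betweenness into the algebraic segment identity; you prove that same fact inline from strict convexity of $L^{r}$, $r\in(1,\infty)$ --- the identical idea, just self-contained. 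For P4a/P4b the logic is reversed: the paper proves P4b directly (from $\rho_{r}(A,A_{n})\to\infty$ it deduces $d_{\infty}(A_{n},\text{I}_{\{0\}})\to\infty$, extracts maximizing points $x_{n}\in A_{n,0}$ and a basis direction $e_{i}$ with $s_{A_{n}}(e_{i},0)\to\infty$, so $O(A_{n};\mathcal{X})\to\infty$) and then obtains P4a for free from Lemma \ref{teorema4b}; you prove P4a directly (a direction with $s_{U}(u_{0},\alpha_{0})\neq0$ makes the numerator blow up as $\lambda\to\infty$) and P4b by contraposition, showing that a subsequence with bounded outlyingness forces $|s_{A_{n_k}}|\le|m|+M\sigma$, where uniform quantile bounds on the median and the MAD of $s_{\mathcal{X}}(u,\alpha)$ (valid since $|s_{\mathcal{X}}(u,\alpha)|\le\|\mathcal{X}_{0}\|$ pointwise) bound the right-hand side by a constant, so $d_{\infty}(A_{n_k},A)$, and hence $\rho_{r}$ and $d_{r}$, stay bounded. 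Both routes are sound: yours handles $\rho_{r}$ and $d_{r}$ in one stroke without constructing an explicit diverging direction, while the paper's direct construction avoids subsequence extraction and yields P4a as a corollary instead of requiring a separate argument. The only points glossed over --- shared with the paper --- are the degenerate case $\text{MAD}(s_{\mathcal{X}}(u,\alpha))=0$ (harmless under the convention that a positive quantity divided by zero is $+\infty$) and, in your P3b step, the tacit use of the fact that equality of support functions identifies the underlying fuzzy sets.
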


The next result shows that $D_{FP}$ is not a geometric depth function for the $d_{r}$ metrics. Using \cite[Example 5.6]{primerarticulo}, it is proved by counterexample that $D_{FP}$ violates property P3b for some metrics.

\begin{proposition}
	$D_{FP}$ is not a geometric depth function for the $d_{r}$-distance for any $r\in [1,\infty]$.
\end{proposition}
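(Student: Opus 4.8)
The plan is to produce an explicit counterexample. Since $D_{FP}(A;\mathcal{X}) = (1+O(A;\mathcal{X}))^{-1}$ is a strictly decreasing function of the outlyingness $O$, property P3b for a metric $d$ is equivalent to requiring that
$$O(A;\mathcal{X}) \le O(U;\mathcal{X}) \le O(V;\mathcal{X})$$
whenever $d(A,V) = d(A,U) + d(U,V)$. At the deepest fuzzy set $A$ one has $O(A;\mathcal{X}) = 0$: when $\mathcal{X}$ is $F$-symmetric with center $A$, equation \eqref{Amedian} gives $s_A(u,\alpha) \in \text{Med}(s_{\mathcal{X}}(u,\alpha))$ for every pair $(u,\alpha)$, so by symmetry $s_A(u,\alpha)$ is the midpoint of the median set and the numerator in \eqref{O} vanishes. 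Hence the first inequality is automatic and the whole content of P3b reduces to the monotonicity $O(U;\mathcal{X}) \le O(V;\mathcal{X})$. I would therefore exhibit fuzzy sets $A,U,V$ and a fuzzy random variable $\mathcal{X}$ for which $U$ lies on a $d_r$-geodesic between $A$ and $V$, yet $O(U;\mathcal{X}) > O(V;\mathcal{X})$, which violates P3b for that single pair.

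The structural reason such a configuration should exist is that the $d_r$ metric, through the Hausdorff distance, takes the supremum over directions $u$ at each level $\alpha$ before integrating in $\alpha$, whereas the algebraic convex combinations that make $O$ monotone (via P3a, already established in Theorem \ref{theoremprojectionsemilinear}) act separately at each direction--level pair. Consequently a fuzzy set $U$ can satisfy the betweenness identity $d_r(A,V) = d_r(A,U) + d_r(U,V)$ without being the algebraic convex combination $(1-\lambda)\cdot A + \lambda\cdot V$, and for such a $U$ the monotonicity of $O$ need not hold. This is precisely the phenomenon that does not arise for $\rho_r$ with $r\in(1,\infty)$, where strict convexity forces $\rho_r$-geodesics to coincide with algebraic segments, which is why Corollary \ref{coropro} holds for $\rho_r$ but fails for $d_r$.

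Concretely, I would build on the configuration of \cite[Example 5.6]{primerarticulo}, which already furnishes fuzzy sets whose $d_r$-geodesics depart from the algebraic segment. Taking $A$ to be the center of an $F$-symmetric $\mathcal{X}$, a suitable $V$, and a $U$ on a $d_r$-geodesic from $A$ to $V$ whose support function overshoots that of $V$ at the direction--level pair realizing $O$, the betweenness identity is checked at the level of Hausdorff distances using $d_H(A_\alpha,B_\alpha) = \sup_{u}|s_A(u,\alpha) - s_B(u,\alpha)|$, while the inequality $O(U;\mathcal{X}) > O(V;\mathcal{X})$ is read off from the $\text{MAD}$-normalized deviations at that pair. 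The main obstacle is reconciling the two conditions in one example: $d_r$-betweenness constrains the level-wise suprema of support-function differences, whereas the outlyingness violation requires the $\text{MAD}$-weighted deviation of $U$ to exceed that of $V$ at the critical direction. The counterexample hinges on the direction dominating $d_r$ differing from the direction--level pair dominating $O$, so that $U$ can be $d_r$-near $A$ while remaining $O$-far from it; I expect a single choice independent of $\alpha$ to cover all $r\in[1,\infty]$, handling $r=\infty$ (where the $\alpha$-integration becomes a supremum) by the same direction but a separate verification of the betweenness identity.
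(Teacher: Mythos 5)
Your proposal takes essentially the same route as the paper: the paper also proves this proposition by a counterexample to P3b built from \cite[Example 5.6]{primerarticulo}, exploiting exactly the point you identify, namely that the betweenness identity $d_{r}(A,V)=d_{r}(A,U)+d_{r}(U,V)$ does not force $U$ to be an algebraic convex combination of $A$ and $V$ (and P3b is indeed the only property that can fail, since Theorem \ref{theoremprojectionsemilinear} establishes P1, P2, P3a, P4a and even P4b for $d_{r}$). The paper gives no more explicit construction than you do---it simply invokes that cited example---so your sketch matches the published argument both in strategy and in level of detail.
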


\section{$L^{r}$-type depths and their properties}\label{Sl}\label{[[4]]}

\subsection{Definitions}\label{Ld}

We present several approaches to statistical depth for fuzzy data inspired by multivariate $L^{r}$-depth.
 As is apparent from \eqref{Lm}, a distance between fuzzy data is required. A natural $L^r$-type distance is  the $\rho_{r}$ metric defined above.

\begin{definition}\label{definicionLDr}
	For any $r\in[1,\infty),$ the  \emph{$r$-natural depth} based on $\mathcal{J}\subseteq\mathcal{F}_{c}(\mathbb{R}^{p})$ and $ \mathcal{H}\subseteq L^{1}[\mathcal{F}_{c}(\mathbb{R}^{p})]$ of a fuzzy set $A\in\mathcal{J}$ with respect to a fuzzy random variable  $\mathcal{X}\in\mathcal{H}$ is
	\begin{equation}\nonumber
		D_{r}(A;\mathcal{X}) := \left(1 + \text{E}[\rho_{r}(A,\mathcal{X})]\right)^{-1}.
	\end{equation}
\end{definition}
The reason to consider $\mathcal{H}\subseteq L^{1}[\mathcal{F}_{c}(\mathbb{R}^{p})]$ is to avoid having an infinite expectation in the definition. While it is possible to define $D_{r}$ as being identically zero in that case  (see \cite[Example 5.9]{primerarticulo}), a null depth function is not desirable in practice, e.g., in classification problems.

\begin{definition}\label{definicionDr}
	For any $r\in[1,\infty),$ the   \emph{$r$-natural raised depth} based on $\mathcal{J}\subseteq\mathcal{F}_{c}(\mathbb{R}^{p})$ and $ \mathcal{H}\subseteq L^{r}[\mathcal{F}_{c}(\mathbb{R}^{p})]$ of a fuzzy set $A\in\mathcal{J}$ with respect to a random variable  $\mathcal{X}\in\mathcal{H}$ is
	\begin{equation}\nonumber
		RD_{r}(A;\mathcal{X}) := \left(1 + E[\rho_{r}(A,\mathcal{X})^{r}]\right)^{-1}.
	\end{equation}

\end{definition}

Another possibility is to define an $L^{r}$-type depth by using the $\midd$ and $\spr$ functions, through which the location and the shape of the fuzzy sets are described. With that aim, denoting by 
$\|\cdot\|_{r}$ the norm of the Banach space $L^{r}\left(\mathbb{S}^{p-1}\times [0,1],\mathcal{V}_{p}\otimes\nu\right),$ 
we define
\begin{equation}\label{dr}
	d_{r,\theta}(A,B) := \left[\left\|\midd (s_{A}) - \midd (s_{B})\right\|_{r}^{r} + \theta\cdot\left\|\spr (s_{A}) - \spr (s_{B})\right\|_{r}^{r} \right]^{1/r}
\end{equation}
for any $A,B\in\mathcal{F}_{c}(\mathbb{R}^{p})$, $r\in [1,\infty)$ and $\theta\in [0,\infty)$. This is a straightforward generalization of the distance $d_{2,\theta}$  in \cite{Trutsching}. For $\theta>0,$ $d_{r,\theta}$ is a metric, as it identifies isometrically each $A\in\pfc(\R^p)$ with the element $(\midd (s_A),\spr (s_A))$ of the Banach space
$$L^{r}\left(\mathbb{S}^{p-1}\times [0,1],\mathcal{V}_{p}\otimes\nu\right)\oplus_r L^{r}\left(\mathbb{S}^{p-1}\times [0,1],\theta^{1/r}\cdot(\mathcal{V}_{p}\otimes\nu)\right).$$
 In the case $\theta=0$ it depends only on $\midd$ and it is just a pseudometric. We will use this case for a counterexample (Proposition \ref{P4Dtheta}).

The definitions introduce a parameter $\theta$ in order to control the relative importance of the shape and location of the fuzzy sets. That resembles what happens in function spaces with the Sobolev distances.
As before, we give two proposals: one based on
$d_{r,\theta}$ and another on $d_{r,\theta}^r$.

\begin{definition}\label{definicionDrtheta}
	For any $r\in[1,\infty)$ and $\theta\in [0,\infty),$ the   \emph{$(r,\theta)$-location depth}  based on $\mathcal{J}\subseteq\mathcal{F}_{c}(\mathbb{R}^{p})$ and $ \mathcal{H}\subseteq L^{r}[\mathcal{F}_{c}(\mathbb{R}^{p})]$ of a fuzzy set $A\in\mathcal{J}$ with respect to a fuzzy random variable  $\mathcal{X}\in\mathcal{H}$ is
	\begin{equation}\nonumber
		D_{r}^{\theta}(A;\mathcal{X}) := \left(1 + \text{E}[d_{r,\theta}(A,\mathcal{X})]\right)^{-1}.
	\end{equation}
\end{definition}

\begin{definition}\label{definicionRDrtheta}
	For any $r\in[1,\infty)$ and $\theta\in [0,\infty),$ the \emph{$(r,\theta)$-location raised depth}  based on $\mathcal{J}\subseteq\mathcal{F}_{c}(\mathbb{R}^{p})$ and $ \mathcal{H}\subseteq L^{r}[\mathcal{F}_{c}(\mathbb{R}^{p})]$  of a fuzzy set $A\in\mathcal{J}$ with respect to a random variable $\mathcal{X}\in\mathcal{H}$ is
	\begin{equation}\nonumber
		RD_{r}^{\theta}(A;\mathcal{X}) := \left(1 + \text{E}[d_{r,\theta}(A,\mathcal{X})^{r}]\right)^{-1}.
	\end{equation}
\end{definition}

The particular case of $D_2^\theta$ in the real line was discussed in \cite[Section 6]{Sinova}.  Yet another similar definition, but involving only the spread and not the mid, is used in \cite[Example 5.7]{primerarticulo} to show that P3a does not imply P3b in general.

\begin{remark}
The general structure of the definitions above is
$$D(A;\mathcal X)=(1+\phi(E[d(A,\mathcal X)]))^{-1}$$
where $d$ is a metric in $\pfc(\R^p)$ and $\phi$ is an appropriate increasing (and convex, for some arguments in the sequel) function with $\phi(0)=0$. While this type of object makes sense in a general metric space, the next subsection will focus on whether it satisfies properties which are specific to the context of fuzzy sets.
\end{remark}

\begin{remark}
	Definitions \ref{definicionLDr} through \ref{definicionRDrtheta}
	adapt the multivariate notion of $L^r$-depth to the fuzzy setting but are not generalizations of it. The reason is that the $r$-norm distance between two points of $\R^p$ does not equal the $\rho_r$- or $d_{r,\theta}$-distance between their indicator functions. Take, for instance, $x = (2,3)$ and $y = (3,7).$ We have 
	$\|x-y\|_1=1+4=5$
	whereas
	$$\rho_{1}\left(I_{\{x\}},I_{\{y\}}\right)= \int_0^{2\pi}\left|\cos\theta+4\cdot\sin\theta\right|\dif\nu(\theta)= 4\sqrt{17}.$$
	Observe $d_{r,\theta}\left(\text{I}_{\{x\}},\text{I}_{\{y\}}\right)=\rho_{r}\left(\text{I}_{\{x\}},\text{I}_{\{y\}}\right)$ for all $\theta\in [0,\infty)$ since their spread is the null function.
\end{remark}

The following result states that functions of the form of $L^{r}$-type depths satisfy property P3a under certain convexity assumptions.

\begin{lemma}\label{teoremaZuo}\label{[[4.6]]}
	If $C(\cdot ,\mathcal{X})$ is a convex function then the function $(1 + \text{E}[C(\cdot ,\mathcal{X})])^{-1}$ satisfies P3a for every $\mathcal{X}\in L^{0}[\mathcal{F}_{c}(\mathbb{R}^{p})]$ such that $\text{E}[C(\text{I}_{\{0\}},\mathcal{X})] < \infty$.
\end{lemma}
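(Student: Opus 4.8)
The plan is to reduce P3a to a pair of inequalities between expected costs. Since $t\mapsto(1+t)^{-1}$ is strictly decreasing on $[0,\infty]$ (with the convention $(1+\infty)^{-1}=0$), the map $B\mapsto D(B;\mathcal{X})=(1+\text{E}[C(B,\mathcal{X})])^{-1}$ is a decreasing function of $\text{E}[C(B,\mathcal{X})]$. Hence a depth maximizer $A$ is precisely a minimizer of $B\mapsto\text{E}[C(B,\mathcal{X})]$ over $\pfc(\R^p)$, and each inequality in P3a reverses into an inequality between costs. Writing $A_\lambda:=(1-\lambda)\cdot A+\lambda\cdot U$, which again belongs to $\pfc(\R^p)$ since this class is closed under nonnegative scalar multiples and Minkowski sums, it will suffice to establish
$$\text{E}[C(A,\mathcal{X})]\le \text{E}[C(A_\lambda,\mathcal{X})]\le \text{E}[C(U,\mathcal{X})].$$
First I would record that the hypothesis $\text{E}[C(\text{I}_{\{0\}},\mathcal{X})]<\infty$ combined with the minimality of $A$ gives $\text{E}[C(A,\mathcal{X})]\le\text{E}[C(\text{I}_{\{0\}},\mathcal{X})]<\infty$, so the quantities in the left-hand inequality are finite and the maximal depth is strictly positive.

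The left inequality is then immediate from minimality: $A_\lambda\in\pfc(\R^p)$ is an admissible competitor, so $\text{E}[C(A,\mathcal{X})]\le\text{E}[C(A_\lambda,\mathcal{X})]$, whence $D(A;\mathcal{X})\ge D(A_\lambda;\mathcal{X})$; note that this uses no convexity. For the right inequality I would split on the value of $\text{E}[C(U,\mathcal{X})]$. If it is infinite, then $D(U;\mathcal{X})=0$ and $D(A_\lambda;\mathcal{X})\ge 0=D(U;\mathcal{X})$ holds trivially. If it is finite, I would invoke the convexity of $C(\cdot,\mathcal{X})$ pointwise in $\omega$ to obtain
$$C(A_\lambda,\mathcal{X})\le(1-\lambda)\cdot C(A,\mathcal{X})+\lambda\cdot C(U,\mathcal{X})$$
and then integrate; since $\text{E}[C(A,\mathcal{X})]$ and $\text{E}[C(U,\mathcal{X})]$ are both finite, the right-hand side is integrable and taking expectations yields
$$\text{E}[C(A_\lambda,\mathcal{X})]\le(1-\lambda)\cdot\text{E}[C(A,\mathcal{X})]+\lambda\cdot\text{E}[C(U,\mathcal{X})].$$
Bounding the first term by $\text{E}[C(U,\mathcal{X})]$ through the minimality of $A$ once more makes the right-hand side at most $\text{E}[C(U,\mathcal{X})]$, so $\text{E}[C(A_\lambda,\mathcal{X})]\le\text{E}[C(U,\mathcal{X})]$ and therefore $D(A_\lambda;\mathcal{X})\ge D(U;\mathcal{X})$.

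The genuinely routine ingredients are the monotonicity of $(1+t)^{-1}$ and the pointwise convexity bound; I expect the main obstacle to be the bookkeeping with infinite expectations. One must ensure the convexity inequality survives integration, and this is exactly why both the finiteness hypothesis on $\text{E}[C(\text{I}_{\{0\}},\mathcal{X})]$ (which pins down $\text{E}[C(A,\mathcal{X})]<\infty$) and the separate treatment of the case $\text{E}[C(U,\mathcal{X})]=\infty$ are needed. A secondary point worth flagging explicitly is that the argument reads $A$ as a maximizer over all of $\pfc(\R^p)$, so that every $A_\lambda$ and every $U\in\pfc(\R^p)$ is an admissible competitor in the minimization; this is the setting in which P3a is posed for a semilinear depth function.
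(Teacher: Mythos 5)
Your proof is correct and takes essentially the same route as the paper, which does not spell the argument out but notes it is the direct analogue of the multivariate result \cite[Theorem 2.4]{ZuoSerfling}: the monotonicity of $t\mapsto(1+t)^{-1}$ reduces P3a to minimality of $\text{E}[C(\cdot,\mathcal{X})]$ at $A$, and the pointwise convexity bound $\text{E}[C(A_\lambda,\mathcal{X})]\le(1-\lambda)\,\text{E}[C(A,\mathcal{X})]+\lambda\,\text{E}[C(U,\mathcal{X})]\le\text{E}[C(U,\mathcal{X})]$ gives the middle inequality. Your explicit handling of the case $\text{E}[C(U,\mathcal{X})]=\infty$ and of the role of the hypothesis $\text{E}[C(\text{I}_{\{0\}},\mathcal{X})]<\infty$ is a welcome bit of extra bookkeeping, not a different method.
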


This lemma and its proof are analogous to the multivariate result  \cite[Theorem 2.4]{ZuoSerfling}, since $C(\cdot,\mathcal{X})$ and P3a maintain the structure of their multivariate analogues.

\begin{proposition}\label{P3dtheta}
	Let $r\in[1,\infty)$, $\theta\in [0,\infty)$ and $\mathcal{X}\in L^{r}[\mathcal{F}_{c}(\mathbb{R}^{p})]$. The functions $\rho_{r}(\cdot;\mathcal{X})$, $\rho_{r}(\cdot;\mathcal{X})^{r}$, $d_{r,\theta}(\cdot;\mathcal{X})$ and $d_{r,\theta}(\cdot;\mathcal{X})^{r}$ are convex.
\end{proposition}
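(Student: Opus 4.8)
The plan is to reduce all four claims to the elementary fact that a (semi)norm, and more generally $|\cdot|^r$ with $r\ge1$, composed with an affine map is convex, once one observes that passing to support functions turns a fuzzy convex combination into a genuine convex combination in a function space. The proposition asserts convexity of random-variable-valued maps such as $\rho_{r}(\cdot;\mathcal X)$; I would prove convexity for an arbitrary fixed fuzzy set $B\in\pfc(\R^p)$ and then specialize to $B=\mathcal X(\omega)$, so that the inequality holds pointwise in $\omega$ (and is therefore preserved by $\E[\cdot]$ when fed into Lemma~\ref{teoremaZuo}).

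First I would record the key linearity. From \eqref{soportesuma}, taking the additive unit $\text{I}_{\{0\}}$ (with $s_{\text{I}_{\{0\}}}\equiv 0$) gives $s_{\gamma\cdot A}=\gamma\cdot s_{A}$ for $\gamma\ge0$, and hence for any $A_1,A_2\in\pfc(\R^p)$ and $\lambda\in[0,1]$,
$$s_{\lambda\cdot A_1+(1-\lambda)\cdot A_2}=\lambda\cdot s_{A_1}+(1-\lambda)\cdot s_{A_2}.$$
Thus $A\mapsto s_A$ is affine from $\pfc(\R^p)$ (with its fuzzy convex combinations) into $L^{r}(\mathbb S^{p-1}\times[0,1],\mathcal V_p\otimes\nu)$. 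Since $\midd(s_A)$ and $\spr(s_A)$ are, by \eqref{midspr}, fixed linear combinations of $s_A(u,\alpha)$ and $s_A(-u,\alpha)$, the maps $A\mapsto\midd(s_A)$ and $A\mapsto\spr(s_A)$ are affine as well.

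For the two unraised depths I would use the triangle inequality. For fixed $B$,
$$\rho_{r}(\lambda\cdot A_1+(1-\lambda)\cdot A_2,B)=\bigl\|\lambda(s_{A_1}-s_B)+(1-\lambda)(s_{A_2}-s_B)\bigr\|_r\le\lambda\,\rho_{r}(A_1,B)+(1-\lambda)\rho_{r}(A_2,B).$$
The same computation applies to $d_{r,\theta}$ once I identify $d_{r,\theta}(A,B)=\|(\midd(s_A)-\midd(s_B),\,\spr(s_A)-\spr(s_B))\|$ with the functional $\|(f,g)\|=(\|f\|_r^{r}+\theta\|g\|_r^{r})^{1/r}$ on $L^{r}\oplus_r L^{r}$: this is a seminorm for every $\theta\ge0$ (a norm when $\theta>0$), so the triangle inequality again yields convexity of $d_{r,\theta}(\cdot,B)$. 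For the raised versions I would argue pointwise under the integral: for each fixed $(u,\alpha)$ the map $A\mapsto s_A(u,\alpha)-s_B(u,\alpha)$ is affine and $t\mapsto|t|^{r}$ is convex for $r\ge1$, so the integrand $|s_A(u,\alpha)-s_B(u,\alpha)|^{r}$ is convex in $A$; integrating against $\mathcal V_p\otimes\nu$ preserves convexity, and the analogous argument for the $\midd$ and $\spr$ integrands, combined with the nonnegative weights $1$ and $\theta$, gives convexity of $d_{r,\theta}(\cdot,B)^{r}$.

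The only point that genuinely needs care — and the crux of the whole argument — is the first step: verifying that the fuzzy arithmetic really linearizes under the support function, so that a convex combination of fuzzy sets is sent to an honest convex combination of support functions; everything downstream is then standard convex analysis. A secondary point to keep in view is the degenerate case $\theta=0$, where $d_{r,0}$ is only a pseudometric, which I would handle by phrasing the relevant step in terms of a seminorm rather than a norm so that the triangle-inequality argument still applies.
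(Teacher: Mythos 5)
Your proof is correct. For the unraised functions $\rho_{r}(\cdot;\mathcal{X})$ and $d_{r,\theta}(\cdot;\mathcal{X})$ you follow essentially the paper's route: linearize through the support function (and its $\midd$/$\spr$ components) and apply the triangle inequality for the relevant (semi)norm, exactly as in the paper's Cases 1 and 2. Where you genuinely diverge is in the raised functions: the paper proves convexity of $\rho_{r}(\cdot;\mathcal{X})^{r}$ and $d_{r,\theta}(\cdot;\mathcal{X})^{r}$ by composing the already-established convex functions with $f(t)=t^{r}$, using that $f$ is increasing and convex on $[0,\infty)$, whereas you argue pointwise under the integral sign: for each fixed $(u,\alpha)$ the map $A\mapsto|s_{A}(u,\alpha)-s_{B}(u,\alpha)|^{r}$ is convex, being a convex function of an affine map, and integration against the nonnegative measure $\mathcal{V}_{p}\otimes\nu$ (together with the nonnegative weight $\theta$ for the $\midd$/$\spr$ terms) preserves convexity. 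Both arguments are valid and elementary; yours makes the raised case logically independent of the unraised one, while the paper's composition trick is shorter once the first step is in hand. A minor point in your favour: the paper's proof calls $(\|\cdot\|_{r}^{r}+\theta\cdot\|\cdot\|_{r}^{r})^{1/r}$ a norm, which is accurate only for $\theta>0$; your phrasing in terms of a seminorm handles the degenerate case $\theta=0$ cleanly, and the triangle inequality is all that either argument actually uses.
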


\subsection{Properties}
\subsubsection{Affine invariance}

The next example shows that neither  $D_{r},$ $RD_{r}$, $D_{r}^{\theta}$ nor $RD_{r}^{\theta}$ are affine invariant in the sense of property P1; the same  happens in the multivariate case \cite{ZuoSerfling}.

\begin{example}\label{M}
	Let $\{\{\omega_{1},\omega_{2}\},\mathcal{P}(\{\omega_{1},\omega_{2}\}),\mathbb{P}\}$ be a probability space with $\mathbb{P}(\{\omega_{1}\}) = \mathbb{P}(\{\omega_{2}\}) = 1/2$.
	
	\begin{itemize}
	\item[(i)]
		Let $\mathcal{X}(\omega_{1}) := \text{I}_{[1,2]}$ and $\mathcal{X}(\omega_{2}) := \text{I}_{[5,7]}.$
		Taking $A = \text{I}_{[3,4]}$, after some algebra we have, for any $r\in [1,\infty)$,

		\begin{equation}\nonumber
			\text{E}(\rho_{r}(A,\mathcal{X})) 
			= \cfrac{1}{2}\cdot\left[2 + \left(\cfrac{3^{r} + 2^{r}}{2}\right)^{1/r} \right]
		\end{equation}
		and
		\begin{equation}\nonumber
			\text{E}(\rho_{r}(A,\mathcal{X})^{r})
			= \cfrac{1}{2}\cdot\left[2^{r} + \cfrac{3^{r} + 2^{r}}{2} \right].
		\end{equation}
		Thus,
		\begin{equation}\nonumber
			D_{r}(A;\mathcal{X}) = \left(2 + \cfrac{1}{2}\cdot\left(\cfrac{3^{r} + 2^{r}}{2}\right)^{1/r}\right)^{-1} > 0
		\end{equation}
		and
		\begin{equation}\nonumber
			RD_{r}(A;\mathcal{X}) = \left(1 + 3\cdot\left(2^{r-2}  + \cfrac{3^{r-1}}{4}\right)\right)^{-1} > 0.
		\end{equation}
		Considering the matrix $M:=(5)\in\mathcal{M}_{1\times 1}(\mathbb{R}),$ 
		$$M\cdot\mathcal{X}(\omega_{1}) = \text{I}_{[5,10]}, \mbox{ } M\cdot\mathcal{X}(\omega_{2}) = \text{I}_{[25,35]} \mbox{ and } M\cdot A = \text{I}_{[15,20]}.$$
		Therefore, for every $r\in [1,\infty),$
		$$E[\rho_{r}(M\cdot A;M\cdot\mathcal{X}) = 5 E[\rho_{r}(A;\mathcal{X})]$$
		whence $D_{r}(M\cdot A;M\cdot\mathcal{X})\neq D_{r}(A;\mathcal{X})$ and $RD_{r}(M\cdot A;M\cdot\mathcal{X})\neq RD_{r}(A;\mathcal{X})$.
	\item[(ii)]Let
		$\mathcal{X}(\omega_{1}) := \text{I}_{[0,2]}$ and $\mathcal{X}(\omega_{2}) := \text{I}_{[2,3]}.$
		Taking $A = \text{I}_{[1,2]},$  we obtain  for any $r\in[1,\infty)$ and $\theta\in (0,\infty)$
		\begin{equation}\nonumber
			\text{E}[d_{r,\theta}(A,\mathcal{X})] = \cfrac{1}{2}\cdot\left(1 + \cfrac{(1+\theta)^{1/r}}{2}\right)
		\end{equation}
		and
		\begin{equation}\nonumber
			\text{E}[d_{r,\theta}(A,\mathcal{X})^{r}] = \cfrac{1}{2}\cdot\left(1 + \cfrac{1+\theta}{2^{r}}\right).
		\end{equation}
		Thus,
		\begin{equation}\nonumber
			D_{r}^{\theta}(A;\mathcal{X}) = \left(1 + \cfrac{1}{2}\cdot\left[1 + \cfrac{(1+\theta)^{1/r}}{2}\right]\right)^{-1} > 0
		\end{equation}
		and
		\begin{equation}\nonumber
			RD_{r}^{\theta}(A;\mathcal{X}) = \left(1 + \cfrac{1}{2}\cdot\left[1 + \cfrac{1+\theta}{2^{r}}\right] \right)^{-1} > 0.
		\end{equation}
		Now, for  $M = (2)\in\mathcal{M}_{1\times 1}(\mathbb{R}),$ 
$$M\cdot\mathcal{X}(\omega_{1}) = \text{I}_{[0,4]}, M\cdot\mathcal{X}(\omega_{2}) = \text{I}_{[4,6]} \mbox{ and }M\cdot A = \text{I}_{[2,4]}.$$ Therefore,
		\begin{equation}\nonumber
			\text{E}[d_{r,\theta}(M\cdot A,M\cdot\mathcal{X})] = 1 + \cfrac{(1+\theta)^{1/r}}{2}
		\end{equation}
		and
		\begin{equation}\nonumber
			\text{E}[d_{r,\theta}(M\cdot A,M\cdot\mathcal{X})^{r}] = 2^{r-1}\cdot\left(1 + \cfrac{1+\theta}{2^{r}} \right).
		\end{equation}
		For every $r\in [1,\infty)$ and $\theta\in (0,\infty)$,
		$$D_{r}^{\theta}(M\cdot A;M\cdot\mathcal{X}) \neq D_{r}^{\theta}(A;\mathcal{X}) \mbox{ and } RD_{r}^{\theta}(M\cdot A;M\cdot\mathcal{X}) \neq RD_{r}^{\theta}(A;\mathcal{X}).$$
	\end{itemize}
\end{example}

Let us consider the following property, weaker than P1.
\begin{enumerate}
	\item[{\bf P1$\ast$.}] $D(M\cdot A + B; M\cdot\mathcal{X} + B) = D(A;\mathcal{X})$ for any orthogonal matrix $M\in\mathcal{M}_{p\times p}(\mathbb{R})$ and $A,B\in\mathcal{F}_{c}(\mathbb{R}^{p}).$
\end{enumerate}
This property (called {\em rigid-body invariance}) was shown to hold in the multivariate case in \cite{ZuoSerfling}. 

The following result  states that $D_{r},$ $RD_{r},$ $D_{r}^{\theta}$ and $RD_{r}^{\theta}$ are invariant when the matrix $M\in\mathcal{M}_{p\times p}(\mathbb{R})$ is orthogonal.  That is due to the fact that $\|M^{T}\cdot u\| = 1$ for all $u\in\mathbb{S}^{p-1}$ if $M$ is orthogonal.
Note that the $M$'s in Example \ref{M}  are not  orthogonal matrices, because their determinant is not $\pm 1$.

\begin{proposition}\label{resultadoDr1}
Let $\mathcal{J}\subseteq\mathcal{F}_{c}(\mathbb{R}^{p})$, $\mathcal H_1 = L^{1}[\mathcal{F}_{c}(\mathbb{R}^{p})] $ and  $\mathcal{H}_r\subseteq L^{r}[\mathcal{F}_{c}(\mathbb{R}^{p})]$. 
Property  P1$\ast$ is satisfied by 	$D_{r}$ based on $\mathcal J$ and $\mathcal H_1$ and $RD_{r}$ based on $\mathcal{J}$ and $\mathcal{H}_r$, for any $r\in [1,\infty]$; and by $D_{r}^{\theta}$ based on $\mathcal J$ and $\mathcal H_1$ and $RD_{r}^{\theta}$ based on $\mathcal{J}$ and $\mathcal{H}$, for any $r\in [1,\infty]$ and $\theta\in [0,\infty).$
\end{proposition}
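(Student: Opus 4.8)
The plan is to reduce everything to the invariance of the underlying distances and then invoke a single change-of-variables argument on the sphere. Each of the four depths has the form $(1+\text{E}[\phi(\delta(A,\mathcal X))])^{-1}$, where $\delta$ is either $\rho_r$ or $d_{r,\theta}$ and $\phi(t)=t$ or $\phi(t)=t^r$. Since $\phi$ and the expectation are applied \emph{after} the distance, it suffices to prove the pointwise (in $\omega$) identities
\[
\rho_r(M\cdot A+B,\,M\cdot\mathcal X+B)=\rho_r(A,\mathcal X),\qquad
d_{r,\theta}(M\cdot A+B,\,M\cdot\mathcal X+B)=d_{r,\theta}(A,\mathcal X)
\]
for every orthogonal $M$ and every $B\in\pfc(\R^p)$; applying $\phi$ and then $\text{E}$ to both sides yields P1$\ast$. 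One also checks that the transformed variable stays in the ambient moment class, i.e.\ $\text{E}[\|(M\cdot\mathcal X+B)_0\|^r]<\infty$ whenever $\text{E}[\|\mathcal X_0\|^r]<\infty$, because an orthogonal map preserves $\|\cdot\|$ on $\pkc(\R^p)$ and adding the bounded $B$ changes the norm only by a constant; this guarantees both sides of P1$\ast$ are well defined.

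For the support-function computation I would combine \eqref{soportesuma} and \eqref{pa}. If $M$ is orthogonal then $\|M^T\cdot u\|=1$ for every $u\in\mathbb{S}^{p-1}$, so \eqref{pa} collapses to $s_{M\cdot A}(u,\alpha)=s_A(M^T\cdot u,\alpha)$, and \eqref{soportesuma} gives $s_{M\cdot A+B}(u,\alpha)=s_A(M^T\cdot u,\alpha)+s_B(u,\alpha)$. Forming the difference $s_{M\cdot A+B}-s_{M\cdot\mathcal X+B}$, the $s_B$ terms cancel (translation invariance), leaving $s_A(M^T\cdot u,\alpha)-s_{\mathcal X}(M^T\cdot u,\alpha)$.

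The decisive step is the substitution $v=M^T\cdot u$. Since $M^T$ is again orthogonal it is a bijection of $\mathbb{S}^{p-1}$, and the normalized Haar measure $\mathcal V_p$ is invariant under orthogonal transformations; hence, for each fixed $\alpha$,
\[
\int_{\mathbb{S}^{p-1}}\bigl|s_A(M^T\cdot u,\alpha)-s_{\mathcal X}(M^T\cdot u,\alpha)\bigr|^r\,\dif\mathcal V_p(u)
=\int_{\mathbb{S}^{p-1}}\bigl|s_A(v,\alpha)-s_{\mathcal X}(v,\alpha)\bigr|^r\,\dif\mathcal V_p(v).
\]
Integrating in $\alpha$ and taking $r$-th roots gives the $\rho_r$ identity; for $r=\infty$ the same bijection preserves the supremum over the sphere. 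This settles $D_r$ and $RD_r$.

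For $d_{r,\theta}$ I would note that $\midd$ and $\spr$ are linear in the support function and, by \eqref{soportesuma}, additive under Minkowski sums, so the previous computation yields $\midd(s_{M\cdot A+B})(u,\alpha)=\midd(s_A)(M^T\cdot u,\alpha)+\midd(s_B)(u,\alpha)$ and the analogous identity for $\spr$. Once more the $B$-terms cancel in the difference, and the orthogonal substitution $v=M^T\cdot u$ preserves each of the two $L^r(\mathbb{S}^{p-1}\times[0,1],\mathcal V_p\otimes\nu)$ norms in \eqref{dr} separately; since $\theta$ only rescales one of them and does not interact with $u$, $d_{r,\theta}$ is unchanged. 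This settles $D_r^\theta$ and $RD_r^\theta$. The only nonroutine ingredient throughout is the orthogonal invariance of $\mathcal V_p$, which is exactly the fact flagged before the statement; once that is granted, all four cases follow from the same substitution.
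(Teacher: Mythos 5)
Your proof is correct and follows essentially the same route as the paper's: reduce P1$\ast$ to the pointwise (in $\omega$) invariance $\rho_r(M\cdot A,M\cdot\mathcal{X}(\omega))=\rho_r(A,\mathcal{X}(\omega))$ (and its $\midd$/$\spr$ analogue for $d_{r,\theta}$) by combining \eqref{soportesuma} and \eqref{pa} with $\|M^{T}\cdot u\|=1$, then perform the substitution $v=M^{T}\cdot u$ over the sphere. The only cosmetic difference is the justification of that substitution: you invoke the orthogonal invariance of the normalized Haar measure $\mathcal{V}_{p}$, while the paper argues via a Jacobian determinant $|\det(M)|=1$; your phrasing is, if anything, the cleaner one for an integral over $\mathbb{S}^{p-1}$, and your extra check that $M\cdot\mathcal{X}+B$ remains in the relevant $L^r$ class is a point of care the paper leaves implicit.
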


\begin{remark}\label{qwert}
	The failure of $P1$ and its multivariate analog for some non-orthogonal matrices illustrates why `lists of properties' are guides rather than axioms for depth functions.
	If the results of an analysis may be different depending on whether temperature values are expressed in the Celsius or Fahrenheit scale, one would like to ponder calmly whether it makes sense to use that method. Thus, failing affine invariance looks like an egregious violation for a depth function.
	
	From the discussion of Property P1*,  $L^r$-depths are rotation (and also translation) invariant, and only have problems with rescaling. Since both the function $x\mapsto (1+x)^{-1}$ and multiplication by a scalar are strictly monotonic, rescaling modifies the depth values but not their order. Therefore, as long as depth values are used as a ranking device (as opposed to important values in themselves) there will be no problem.
	
	For instance, consider a depth-trimmed mean obtained by eliminating from the sample the 10\% less $L^r$-deep points. Rescaling does not affect which sample points get trimmed and therefore the depth-trimmed mean will still be affinely invariant, even if the depth function itself is not. Similarly, a depth-based classification task will yield the same result regardless of rescaling.
	
	Moreover, in some situations data are routinely standardized before the analysis, which makes the rescaling issue irrelevant. For instance, in cell studies like cancer diagnosis, cell measurements  taken from tissue images need standardization since different images may not share the same scale.
\end{remark}

\begin{remark} 
	In \cite[Proposition 6.1]{Sinova}, Sinova shows what amounts to stating that $D_2^\theta$ (in the real line) satisfies Property P1*. In that case,  matrices are not involved since the only orthogonal transformations of $\R$ are the identity  function $id$ and its opposite $-id$.
	Although Sinova also states properties of monotonicity relative to the deepest point and vanishing at infinity, they are formulated in terms of the behaviour of $E[d_{2,\theta}(\mathcal X,A)]$ instead of $A$ itself, following from the definition.
\end{remark}

\subsubsection{Maximality at the center of symmetry}

While $F$-symmetry is suitable for  $D_{r}$ and $RD_{r},$ we will use  $(\midd ,\spr)$-symmetry for  $D_{r}^{\theta}$ and $RD_{r}^{\theta}$ as the $\midd$ and $\spr$ functions are involved in their construction.
We first focus on cases $r=1,2,$ since some of our proofs employ arguments which are specific to those values.

For $r=1,$ the results are in Propositions \ref{resultadoDr2} and \ref{P2Dtheta}, which require integrably bounded fuzzy random variables. These propositions  rely on Lemmas \ref{lemaintegrably} and \ref{lemamidintegrably}, which  ensure the existence of the expectation in the denominator of $D_{1}$ and $D_{1}^{\theta},$ respectively.  Note that for $r=1$ one has $RD_1=D_1$ and $RD_1^\theta=D_1^\theta$.

\begin{lemma}\label{lemaintegrably}\label{hum}\label{rug}\label{grunt}
	Let $r,s\in[1,\infty)$ and $\mathcal{X}\in L^{r}[\mathcal{F}_{c}(\mathbb{R}^{p})].$ Then $\text{E}[\rho_{s}(\text{I}_{\{0\}}, \mathcal{X})^r] < \infty$.
\end{lemma}

\begin{lemma}\label{lemamidintegrably}\label{far}
	Let $r\in[1,\infty),$  $\theta\in[0,\infty)$ and
	 $\mathcal{X}\in L^{1}[\mathcal{F}_{c}(\mathbb{R}^{p})]$. Then $E[d_{r,\theta}(\text{I}_{\{0\}}, \mathcal X)]<\infty.$
\end{lemma}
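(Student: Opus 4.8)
The plan is to dominate $d_{r,\theta}(\text{I}_{\{0\}},\mathcal{X})$ pointwise in $\omega$ by a constant multiple of $\|\mathcal{X}_0\|$ and then integrate, using that $\mathcal{X}\in L^1[\pfc(\R^p)]$ means precisely $\E[\|\mathcal{X}_0\|]<\infty$. First I would observe that $s_{\text{I}_{\{0\}}}(u,\alpha)=\sup_{v\in\{0\}}\langle u,v\rangle=0$ for every $(u,\alpha)$, so that both $\midd(s_{\text{I}_{\{0\}}})$ and $\spr(s_{\text{I}_{\{0\}}})$ are identically zero. By the definition \eqref{dr} of $d_{r,\theta}$ this reduces the quantity of interest to
$$d_{r,\theta}(\text{I}_{\{0\}},\mathcal{X}(\omega))^r=\left\|\midd(s_{\mathcal{X}(\omega)})\right\|_r^r+\theta\cdot\left\|\spr(s_{\mathcal{X}(\omega)})\right\|_r^r.$$

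The key estimate is a uniform bound on the integrands. For fixed $\omega$, since $\|u\|=1$ we have $|s_{\mathcal{X}}(u,\alpha,\omega)|=\left|\sup_{v\in\mathcal{X}_\alpha(\omega)}\langle u,v\rangle\right|\le\sup_{v\in\mathcal{X}_\alpha(\omega)}\|v\|=\|\mathcal{X}_\alpha(\omega)\|\le\|\mathcal{X}_0(\omega)\|$, where the last inequality uses the nestedness $\mathcal{X}_\alpha(\omega)\subseteq\mathcal{X}_0(\omega)$. Substituting this into the formulas \eqref{midspr} and using the triangle inequality gives $|\midd(s_{\mathcal{X}})(u,\alpha,\omega)|\le\|\mathcal{X}_0(\omega)\|$ and $|\spr(s_{\mathcal{X}})(u,\alpha,\omega)|\le\|\mathcal{X}_0(\omega)\|$ for all $(u,\alpha)$. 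Because $\mathcal{V}_p$ and $\nu$ are both normalized to total mass one, the product $\mathcal{V}_p\otimes\nu$ is a probability measure, so integrating the $r$-th powers yields $\left\|\midd(s_{\mathcal{X}(\omega)})\right\|_r^r\le\|\mathcal{X}_0(\omega)\|^r$ and likewise for the spread term.

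Combining the two estimates gives, for every $r\in[1,\infty)$ and $\theta\in[0,\infty)$,
$$d_{r,\theta}(\text{I}_{\{0\}},\mathcal{X}(\omega))\le(1+\theta)^{1/r}\cdot\|\mathcal{X}_0(\omega)\|.$$
Taking expectations then delivers $\E[d_{r,\theta}(\text{I}_{\{0\}},\mathcal{X})]\le(1+\theta)^{1/r}\cdot\E[\|\mathcal{X}_0\|]<\infty$, which is the claim.

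I do not expect a genuine obstacle: the argument is a routine domination. The only points demanding care are the normalization of $\mathcal{V}_p$ and $\nu$ (which is what lets a supremum bound control the $L^r$-norm), and the measurability of $\omega\mapsto d_{r,\theta}(\text{I}_{\{0\}},\mathcal{X}(\omega))$ needed for the expectation to be well defined; the latter is inherited from the measurability of the support function of a fuzzy random variable.
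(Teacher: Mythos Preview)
Your proof is correct and follows essentially the same approach as the paper: both obtain a pointwise domination of $d_{r,\theta}(\text{I}_{\{0\}},\mathcal{X}(\omega))$ by a constant multiple of $\|\mathcal{X}_0(\omega)\|$ via the uniform bound $|s_{\mathcal{X}(\omega)}(u,\alpha)|\le\|\mathcal{X}_0(\omega)\|$, then integrate. The only minor difference is that the paper invokes an external result (\cite[Proposition~4.2]{Trutsching}) to get the sharper constant $1$ in place of your $(1+\theta)^{1/r}$, whereas your argument is fully self-contained; for the purpose of the lemma the constant is irrelevant.
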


	\begin{proposition}\label{resultadoDr2}
		Let $\mathcal{J} = \mathcal{F}_{c}(\mathbb{R}^{p})$ and $\mathcal{H}\subseteq L^{1}[\mathcal{F}_{c}(\mathbb{R}^{p})]$. Then $D_1$ (equivalently, $RD_{1}$) based on $\mathcal{J}$ and $\mathcal{H}$ satisfies Property P2 for $F$-symmetry.
	\end{proposition}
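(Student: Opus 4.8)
The plan is to reduce P2 for $D_1$ to a pointwise median-minimization argument. Since the map $x\mapsto(1+x)^{-1}$ is strictly decreasing on $[0,\infty)$, maximizing $D_1(\cdot;\mathcal{X})$ over $\pfc(\R^p)$ is equivalent to minimizing $B\mapsto\text{E}[\rho_1(B,\mathcal{X})]$. By Lemma \ref{lemaintegrably} (with $r=s=1$) together with the triangle inequality $\rho_1(B,\mathcal{X})\le\rho_1(B,\text{I}_{\{0\}})+\rho_1(\text{I}_{\{0\}},\mathcal{X})$, this expectation is finite for every $B\in\pfc(\R^p)$ whenever $\mathcal{X}\in L^1[\pfc(\R^p)]$, so the minimization problem is well posed.

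First I would apply Tonelli's theorem: the integrand $|s_B(u,\alpha)-s_{\mathcal{X}}(u,\alpha,\omega)|$ is nonnegative and jointly measurable, so the expectation and the $(u,\alpha)$-integration can be exchanged, giving
$$\text{E}[\rho_1(B,\mathcal{X})]=\int_{[0,1]}\int_{\mathbb{S}^{p-1}}\text{E}\big[\,|s_B(u,\alpha)-s_{\mathcal{X}}(u,\alpha)|\,\big]\,\dif\mathcal{V}_p(u)\,\dif\nu(\alpha).$$
This converts a single global minimization over fuzzy sets into a family of independent scalar minimizations, one for each fixed pair $(u,\alpha)$: the value $c=s_B(u,\alpha)$ may be optimized separately at each $(u,\alpha)$.

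Next I would invoke the classical fact that, for an integrable random variable $Y$, the function $c\mapsto\text{E}[|c-Y|]$ attains its minimum precisely on $\text{Med}(Y)$; hence the inner integrand at $(u,\alpha)$ is bounded below by $\text{E}[|m-s_{\mathcal{X}}(u,\alpha)|]$ for any $m\in\text{Med}(s_{\mathcal{X}}(u,\alpha))$. The crux is then to exhibit a single fuzzy set whose support function is simultaneously a median at \emph{every} $(u,\alpha)$, and this is exactly what $F$-symmetry provides: if $\mathcal{X}$ is $F$-symmetric with respect to $A\in\pfc(\R^p)$, then by \eqref{Amedian} we have $s_A(u,\alpha)\in\text{Med}(s_{\mathcal{X}}(u,\alpha))$ for all $(u,\alpha)$, so $s_A(u,\alpha)$ realizes the pointwise minimum of $c\mapsto\text{E}[|c-s_{\mathcal{X}}(u,\alpha)|]$ at each $(u,\alpha)$ at once.

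Combining these, for every $B\in\pfc(\R^p)$ and every $(u,\alpha)$ one obtains $\text{E}[|s_A(u,\alpha)-s_{\mathcal{X}}(u,\alpha)|]\le\text{E}[|s_B(u,\alpha)-s_{\mathcal{X}}(u,\alpha)|]$, and integrating over $\mathbb{S}^{p-1}\times[0,1]$ yields $\text{E}[\rho_1(A,\mathcal{X})]\le\text{E}[\rho_1(B,\mathcal{X})]$; thus $D_1(A;\mathcal{X})\ge D_1(B;\mathcal{X})$ for all $B$, which is P2. The identity $RD_1=D_1$ (since $\rho_1^1=\rho_1$) then gives the statement for $RD_1$ at once. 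I expect the only delicate points to be bookkeeping rather than substance: verifying joint measurability of the integrand so that Tonelli applies, and observing that although not every real-valued function of $(u,\alpha)$ is a legitimate support function, this merely shrinks the competitor class $\{s_B : B\in\pfc(\R^p)\}$ relative to all of $\R$, so the pointwise lower bound attained by $s_A$ remains valid and the conclusion is unaffected.
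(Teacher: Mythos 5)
Your proposal is correct and follows essentially the same route as the paper's own proof: both reduce P2 to the pointwise fact, via \eqref{Amedian}, that $s_A(u,\alpha)$ is a median of $s_{\mathcal{X}}(u,\alpha)$ and hence minimizes $c\mapsto\text{E}[|c-s_{\mathcal{X}}(u,\alpha)|]$, then exchange expectation with the $(u,\alpha)$-integration (the paper invokes Fubini with joint measurability of $s_{\mathcal X}$; you invoke Tonelli, an immaterial variant here) and integrate the pointwise inequality to conclude $D_1(A;\mathcal{X})\ge D_1(U;\mathcal{X})$ for all $U$. Your added well-posedness remark via Lemma \ref{lemaintegrably} and the triangle inequality is fine but not needed for the argument, exactly as in the paper.
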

	
	\begin{proposition}\label{P2Dtheta}
		Let $\mathcal{J} = \mathcal{F}_{c}(\mathbb{R}^{p})$, $\mathcal{H}\subseteq L^{1}[\mathcal{F}_{c}(\mathbb{R}^{p})]$ and $\theta\in [0,\infty)$. Then $D_1^\theta$ (equivalently, $RD_{1}^\theta$) based on $\mathcal{J}$ and $\mathcal{H}$ satisfies Property P2 for $(\midd,\spr)$-symmetry.
	\end{proposition}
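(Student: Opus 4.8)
The plan is to exploit that $x\mapsto(1+x)^{-1}$ is strictly decreasing, so that a fuzzy set maximizes $D_1^\theta(\cdot;\mathcal{X})$ over $\pfc(\R^p)$ if and only if it minimizes $\E[d_{1,\theta}(\cdot,\mathcal{X})]$. Letting $A$ be a center of $(\midd,\spr)$-symmetry of $\mathcal{X}$, it therefore suffices to prove $\E[d_{1,\theta}(B,\mathcal{X})]\ge\E[d_{1,\theta}(A,\mathcal{X})]$ for every $B\in\pfc(\R^p)$; this is exactly the maximality of $A$ required by P2. Since $RD_1^\theta=D_1^\theta$ for $r=1$, the same computation settles both functions simultaneously.

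First I would record that the relevant expectations are finite. By the triangle inequality $d_{1,\theta}(B,\mathcal{X})\le d_{1,\theta}(B,\text{I}_{\{0\}})+d_{1,\theta}(\text{I}_{\{0\}},\mathcal{X})$, where the first term is a constant and the second has finite expectation by Lemma~\ref{lemamidintegrably}; hence $\E[d_{1,\theta}(B,\mathcal{X})]<\infty$. With finiteness in hand, Tonelli's theorem permits interchanging the expectation over $\Omega$ with the integration over $\mathbb{S}^{p-1}\times[0,1]$, giving
\begin{equation}\nonumber
\E[d_{1,\theta}(B,\mathcal{X})]=\int_{[0,1]}\!\!\int_{\mathbb{S}^{p-1}}\E\big[|\midd(s_B)-\midd(s_{\mathcal{X}})|\big]\,\dif\mathcal{V}_p\,\dif\nu+\theta\int_{[0,1]}\!\!\int_{\mathbb{S}^{p-1}}\E\big[|\spr(s_B)-\spr(s_{\mathcal{X}})|\big]\,\dif\mathcal{V}_p\,\dif\nu,
\end{equation}
where for each fixed $(u,\alpha)$ the values $\midd(s_B)(u,\alpha)$ and $\spr(s_B)(u,\alpha)$ are deterministic constants, while $\midd(s_{\mathcal{X}})(u,\alpha)$ and $\spr(s_{\mathcal{X}})(u,\alpha)$ are integrable random variables (each bounded in absolute value by $\|\mathcal{X}_0\|$, whose expectation is finite since $\mathcal{X}\in L^1[\pfc(\R^p)]$).

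The heart of the argument is the classical fact that, for an integrable random variable $Y$, the map $c\mapsto\E[|c-Y|]$ attains its minimum precisely at the medians of $Y$. Applied at each fixed $(u,\alpha)$ with $Y=\midd(s_{\mathcal{X}})(u,\alpha)$, and recalling from \eqref{lemamedian} that $(\midd,\spr)$-symmetry of $\mathcal{X}$ about $A$ yields $\midd(s_A)(u,\alpha)\in\text{Med}(\midd(s_{\mathcal{X}})(u,\alpha))$ and $\spr(s_A)(u,\alpha)\in\text{Med}(\spr(s_{\mathcal{X}})(u,\alpha))$, we obtain the pointwise inequalities
\begin{equation}\nonumber
\E\big[|\midd(s_B)-\midd(s_{\mathcal{X}})|\big]\ge\E\big[|\midd(s_A)-\midd(s_{\mathcal{X}})|\big],\qquad\E\big[|\spr(s_B)-\spr(s_{\mathcal{X}})|\big]\ge\E\big[|\spr(s_A)-\spr(s_{\mathcal{X}})|\big].
\end{equation}
Integrating these against $\mathcal{V}_p\otimes\nu$ and substituting into the decomposition yields $\E[d_{1,\theta}(B,\mathcal{X})]\ge\E[d_{1,\theta}(A,\mathcal{X})]$ for every $B$, hence $D_1^\theta(A;\mathcal{X})=\sup_{B\in\pfc(\R^p)}D_1^\theta(B;\mathcal{X})$, as claimed.

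I expect the only delicate points to be bookkeeping rather than substance: justifying the Tonelli interchange (which is where the integrable-boundedness hypothesis $\mathcal{H}\subseteq L^1[\pfc(\R^p)]$ and Lemma~\ref{lemamidintegrably} enter) and confirming joint measurability of $(u,\alpha,\omega)\mapsto\midd(s_{\mathcal{X}})(u,\alpha,\omega)$ so that the inner expectations are well-defined measurable functions of $(u,\alpha)$. The conceptual core---that $(\midd,\spr)$-symmetry forces the center's mid and spread to be pointwise medians, which are exactly the $L^1$-minimizers---is immediate from \eqref{lemamedian} and requires no further work.
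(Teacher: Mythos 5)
Your proof is correct and follows essentially the same route as the paper's: the paper likewise reduces P2 to the pointwise fact \eqref{lemamedian} that the mid and spread of the center are medians, invokes the $L^1$-optimality of medians, and applies Fubini's theorem to the $\midd$ and $\spr$ components separately (citing joint measurability of $s_{\mathcal X}$, which you correctly flag as the only technical point). Your write-up simply makes explicit, including the finiteness check via Lemma~\ref{lemamidintegrably}, what the paper compresses into a reference to the argument of Proposition~\ref{resultadoDr2}.
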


For $r=2$, the results are in Propositions \ref{Simetriar2} and \ref{P2Dtheta2}.

\begin{proposition}\label{Simetriar2}
	Let $\mathcal{J} = \mathcal{F}_{c}(\mathbb{R}^{p})$ and $\mathcal{H}\subseteq L^{2}[\mathcal{F}_{c}(\mathbb{R}^{p})]$. 
	Then, $RD_{2}$ based on $\mathcal{J}$ and $\mathcal{H}$ satisfies Property P2 for $F$-symmetry.
\end{proposition}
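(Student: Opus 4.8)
The plan is to recast the claim as a pointwise $L^2$-minimization. Since $t\mapsto(1+t)^{-1}$ is strictly decreasing, $RD_2(U;\mathcal{X})=\sup_{B\in\pfc(\R^p)}RD_2(B;\mathcal{X})$ is equivalent to saying that the center of $F$-symmetry $U$ minimizes $A\mapsto \E[\rho_2(A,\mathcal{X})^2]$ over $\pfc(\R^p)$. So the whole proof reduces to establishing that inequality.

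First I would unfold $\rho_2$ from \eqref{rhor} and interchange the expectation with the double integral over $\mathbb{S}^{p-1}\times[0,1]$. As the integrand $|s_A(u,\alpha)-s_{\mathcal{X}}(u,\alpha,\omega)|^2$ is nonnegative and jointly measurable in $(u,\alpha,\omega)$ (inherited from the measurability of the support process of a fuzzy random variable), Tonelli's theorem gives
$$\E[\rho_2(A,\mathcal{X})^2]=\int_{[0,1]}\int_{\mathbb{S}^{p-1}}\E\left[|s_A(u,\alpha)-s_{\mathcal{X}}(u,\alpha)|^2\right]\dif\mathcal{V}_p(u)\,\dif\nu(\alpha).$$
Next I would minimize the inner expectation pointwise in $(u,\alpha)$. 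Treating $c:=s_A(u,\alpha)$ as a free real parameter, the classical $L^2$ fact states that $c\mapsto\E[|c-s_{\mathcal{X}}(u,\alpha)|^2]$ is minimized uniquely at $c=\E[s_{\mathcal{X}}(u,\alpha)]$; this mean is finite because $\mathcal{X}\in L^2$ and $|s_{\mathcal{X}}(u,\alpha)|\le\|\mathcal{X}_0\|$. It then remains to identify this minimizer with $s_U(u,\alpha)$: invoking $F$-symmetry of $\mathcal{X}$ about $U$, for each $(u,\alpha)$ one has $s_U(u,\alpha)-s_{\mathcal{X}}(u,\alpha)=^{d}s_{\mathcal{X}}(u,\alpha)-s_U(u,\alpha)$, and taking expectations on both sides (finite by the $L^2$ hypothesis) yields $s_U(u,\alpha)=\E[s_{\mathcal{X}}(u,\alpha)]$. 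Hence $s_U(u,\alpha)$ is exactly the pointwise minimizer for every $(u,\alpha)$, and crucially it is automatically a genuine support function since $U\in\pfc(\R^p)$ is given.

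Finally I would integrate the pointwise inequality $\E[|s_U(u,\alpha)-s_{\mathcal{X}}(u,\alpha)|^2]\le\E[|s_A(u,\alpha)-s_{\mathcal{X}}(u,\alpha)|^2]$ over $\mathbb{S}^{p-1}\times[0,1]$ to obtain $\E[\rho_2(U,\mathcal{X})^2]\le\E[\rho_2(A,\mathcal{X})^2]$ for every $A\in\pfc(\R^p)$. Lemma \ref{lemaintegrably} (with $r=s=2$) ensures $\E[\rho_2(U,\mathcal{X})^2]<\infty$, so $RD_2(U;\mathcal{X})>0$, the supremum is attained at $U$, and P2 for $F$-symmetry follows.

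The argument is a Hilbert-space projection in disguise, so the only genuinely delicate points are the justification of Tonelli (joint measurability of the support process, which I would cite from the fuzzy-random-variable framework in Section \ref{Frv}) and the legitimacy of passing to expectations across the distributional identity defining $F$-symmetry. The conceptual crux — and the main obstacle to any generalization — is that the center of $F$-symmetry coincides pointwise with the \emph{mean} of the support process. This is precisely where $r=2$ is used essentially: for general $r$ the minimizer of $\E[|c-Y|^r]$ is not the mean, so it need not agree with the symmetry center $s_U(u,\alpha)$ (which, by \eqref{Amedian}, is only guaranteed to be a median), and the clean pointwise identification breaks down.
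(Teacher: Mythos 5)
Your proposal is correct and follows essentially the same route as the paper's own proof: both identify the center of $F$-symmetry pointwise with the mean $\E[s_{\mathcal{X}}(u,\alpha)]$ (you do it by taking expectations across the distributional identity, the paper by citing its lemma on symmetric real random variables, whose proof is exactly that computation), then invoke the $L^2$-characterization of the mean as minimizer of the expected squared deviation and integrate the pointwise inequality via Fubini/Tonelli. The only cosmetic differences are your explicit appeal to Tonelli with joint measurability of the support process and your cleaner phrasing of the pointwise minimization over real constants rather than over support-function values.
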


\begin{proposition}\label{P2Dtheta2}
	Let $\mathcal{J} = \mathcal{F}_{c}(\mathbb{R}^{p})$, $\mathcal{H}\subseteq L^{2}[\mathcal{F}_{c}(\mathbb{R}^{p})]$ and $\theta\in [0,\infty)$. Then, $RD_{2}^{\theta}$ based on $\mathcal{J}$ and $\mathcal{H}$ satisfies Property P2 for $(\midd,\spr)$-symmetry.
\end{proposition}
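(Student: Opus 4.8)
The plan is to use that $x\mapsto(1+x)^{-1}$ is strictly decreasing, so that maximizing $RD_2^\theta(\cdot;\mathcal X)$ over $\mathcal J=\pfc(\R^p)$ is equivalent to minimizing $B\mapsto\E[d_{2,\theta}(B,\mathcal X)^2]$. Fix a fuzzy set $A$ that is a center of $(\midd,\spr)$-symmetry of $\mathcal X$; I will show it attains this minimum. Using \eqref{dr} with $r=2$ together with the definition of the norm of $L^2(\mathbb S^{p-1}\times[0,1],\mathcal V_p\otimes\nu)$,
$$d_{2,\theta}(B,\mathcal X)^2=\int_{\mathbb S^{p-1}\times[0,1]}\big|\midd(s_B)-\midd(s_{\mathcal X})\big|^2\,\dif(\mathcal V_p\otimes\nu)+\theta\int_{\mathbb S^{p-1}\times[0,1]}\big|\spr(s_B)-\spr(s_{\mathcal X})\big|^2\,\dif(\mathcal V_p\otimes\nu).$$
Since the integrands are nonnegative and jointly measurable, Tonelli's theorem lets me interchange expectation and integral to obtain
$$\E[d_{2,\theta}(B,\mathcal X)^2]=\int_{\mathbb S^{p-1}\times[0,1]}\Big(\E\big[|m_B-m_{\mathcal X}|^2\big]+\theta\,\E\big[|p_B-p_{\mathcal X}|^2\big]\Big)\,\dif(\mathcal V_p\otimes\nu),$$
where I abbreviate $m_B=\midd(s_B)(u,\alpha)$, $m_{\mathcal X}=\midd(s_{\mathcal X})(u,\alpha)$ and likewise $p_B,p_{\mathcal X}$ for the spread. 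All second moments are finite because $\mathcal X\in L^2[\pfc(\R^p)]$: by \eqref{midspr} both $|m_{\mathcal X}|$ and $|p_{\mathcal X}|$ are bounded by $\|\mathcal X_0\|$, so $\E[|m_{\mathcal X}|^2],\E[|p_{\mathcal X}|^2]\le\E[\|\mathcal X_0\|^2]<\infty$, which for the fixed fuzzy set $A$ guarantees $\E[d_{2,\theta}(A,\mathcal X)^2]<\infty$ and hence $RD_2^\theta(A;\mathcal X)>0$.

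The core of the argument is a pointwise minimization at each $(u,\alpha)$. For a real random variable $Y$ with $\E[Y^2]<\infty$ one has $\E[(m-Y)^2]=(m-\E[Y])^2+\operatorname{Var}(Y)$, which is minimized exactly at $m=\E[Y]$. Thus, for every $(u,\alpha)$, the bracketed integrand above is minimized by the unconstrained choices $m_B=\E[m_{\mathcal X}]$ and $p_B=\E[p_{\mathcal X}]$. I then identify these minimizers with $A$ itself: by the definition of $(\midd,\spr)$-symmetry, $m_{\mathcal X}$ is symmetrically distributed about $\midd(s_A)(u,\alpha)$ and $p_{\mathcal X}$ about $\spr(s_A)(u,\alpha)$; since both have finite mean, the center of symmetry equals the mean, so $\midd(s_A)(u,\alpha)=\E[m_{\mathcal X}]$ and $\spr(s_A)(u,\alpha)=\E[p_{\mathcal X}]$ for $(\mathcal V_p\otimes\nu)$-almost every $(u,\alpha)$. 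This is the $r=2$ counterpart of the relation \eqref{lemamedian}, upgraded from ``median'' to ``mean'' precisely because the quadratic is minimized by the mean rather than the median.

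Putting the two steps together, for any $B\in\pfc(\R^p)$ the integrand associated with $B$ dominates pointwise a.e.\ the one associated with $A$, so integrating yields $\E[d_{2,\theta}(B,\mathcal X)^2]\ge\E[d_{2,\theta}(A,\mathcal X)^2]$, i.e.\ $RD_2^\theta(B;\mathcal X)\le RD_2^\theta(A;\mathcal X)$, which is exactly Property P2. I expect the only delicate point to be conceptual rather than computational: the pointwise optimum $(\E[m_{\mathcal X}],\E[p_{\mathcal X}])$ is a minimizer over arbitrary real numbers, whereas the admissible pairs $(\midd(s_B),\spr(s_B))$ must arise from a genuine fuzzy set of $\pfc(\R^p)$, so a priori the constrained infimum could exceed the unconstrained lower bound. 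This obstacle dissolves at no cost because the center of symmetry $A$ is itself a fuzzy set in $\mathcal J$ that realizes the unconstrained optimum at almost every direction; hence the lower bound is attained within $\mathcal J$, precisely at $A$. The $r=2$ squaring in $RD_2^\theta$ is what makes this work, turning each per-direction problem into the quadratic whose minimizer is the mean, mirroring how the median minimizes the $L^1$ deviation in the $r=1$ case of Proposition \ref{P2Dtheta}.
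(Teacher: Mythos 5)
Your proposal is correct and follows essentially the same route as the paper: the paper proves this proposition by invoking the argument of Proposition \ref{Simetriar2} applied separately to the $\midd$ and $\spr$ components, i.e.\ it uses Lemma \ref{teoremasimetriaReales} to identify the $(\midd,\spr)$-center of symmetry with the pointwise means $\E[\midd(s_{\mathcal X})(u,\alpha)]$ and $\E[\spr(s_{\mathcal X})(u,\alpha)]$, the fact that the mean minimizes expected squared deviation, and a Fubini-type interchange, exactly as you do. Your write-up is merely more self-contained (spelling out the Tonelli step, the bias--variance identity, and the remark that the unconstrained pointwise optimum is attained by the genuine fuzzy set $A$), but the underlying argument is the paper's.
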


Fuzzy sets can be associated with their support functions in the function space $L^{r}(\mathbb{S}^{p-1}\times [0,1],\mathcal{V}_{p}\otimes\nu)$. Thus, it is possible to define a notion of symmetry in the fuzzy setting by using  central symmetry in that function space (see \citep{Simetria}). Notice that this notion does not depend on the choice of $r\in[1,\infty)$.

\begin{definition}
Let $\mathcal{X}$ be a fuzzy random variable, we say that $\mathcal{X}$ is \textit{functionally symmetric} with respect to a fuzzy set $A$ if $s_{\mathcal{X}} -  s_{A}$ is identically distributed as $s_{A}-s_{\mathcal{X}}$.
\end{definition}

\begin{theorem}\label{simetriaLr}
	Let $r\in [1,\infty)$, $\mathcal{J} = \mathcal{F}_{c}(\mathbb{R}^{p})$ and $\mathcal{H}\subseteq L^{1}[\mathcal{F}_{c}(\mathbb{R}^{p})]$. Then, $D_{r}$ based on $\mathcal{J}$ and $\mathcal{H}$ satisfies Property P2 for {functional symmetry}.
\end{theorem}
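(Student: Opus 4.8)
The plan is to reduce Property P2 to a variational (minimization) statement in the function space carrying the support functions, and then settle it by a one-line convexity argument exploiting central symmetry. Since $t\mapsto(1+t)^{-1}$ is strictly decreasing on $[0,\infty)$, for fixed $\mathcal X$ the inequality $D_{r}(A;\mathcal X)\ge D_{r}(B;\mathcal X)$ is equivalent to $\text{E}[\rho_{r}(A,\mathcal X)]\le\text{E}[\rho_{r}(B,\mathcal X)]$. Hence it suffices to show that, whenever $\mathcal X$ is functionally symmetric with respect to $A\in\mathcal J=\pfc(\R^p)$, the fuzzy set $A$ minimizes $B\mapsto\text{E}[\rho_{r}(B,\mathcal X)]$ over $\pfc(\R^p)$. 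All expectations involved are finite: since $\mathcal X\in L^{1}[\pfc(\R^p)]$, Lemma \ref{lemaintegrably} gives $\text{E}[\rho_{r}(\text{I}_{\{0\}},\mathcal X)]<\infty$, and the triangle inequality $\rho_{r}(B,\mathcal X)\le\rho_{r}(B,\text{I}_{\{0\}})+\rho_{r}(\text{I}_{\{0\}},\mathcal X)$ together with finiteness of the deterministic term $\rho_{r}(B,\text{I}_{\{0\}})$ yields $\text{E}[\rho_{r}(B,\mathcal X)]<\infty$ for every $B\in\pfc(\R^p)$.

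Next I would recast everything in the Banach space whose norm $\|\cdot\|_{r}$ was introduced before Definition \ref{definicionDrtheta}, namely $L^{r}(\mathbb S^{p-1}\times[0,1],\mathcal V_{p}\otimes\nu)$, under the identification of a fuzzy set with its support function; by Tonelli, $\rho_{r}(C,D)=\|s_{C}-s_{D}\|_{r}$. Writing $Z:=s_{\mathcal X}-s_{A}$ (a random element of that space) and $w:=s_{B}-s_{A}$ (a fixed element, as $A$ and $B$ are fixed), one has $\rho_{r}(A,\mathcal X)=\|{-Z}\|_{r}=\|Z\|_{r}$ and $\rho_{r}(B,\mathcal X)=\|w-Z\|_{r}$. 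By definition, functional symmetry of $\mathcal X$ about $A$ is exactly the statement $Z=^{d}-Z$ as random elements of this space. Applying the continuous (hence Borel measurable) translation $z\mapsto w+z$ preserves equality in distribution, so $w+Z=^{d}w+(-Z)=w-Z$; taking norms and expectations gives $\text{E}[\|w+Z\|_{r}]=\text{E}[\|w-Z\|_{r}]$.

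The comparison now follows from a single pointwise inequality. For every $\omega$, the triangle inequality in $L^{r}$ gives $2\|Z\|_{r}=\|(w+Z)-(w-Z)\|_{r}\le\|w+Z\|_{r}+\|w-Z\|_{r}$. Taking expectations and inserting the identity $\text{E}[\|w+Z\|_{r}]=\text{E}[\|w-Z\|_{r}]$ from the previous step yields $2\,\text{E}[\|Z\|_{r}]\le2\,\text{E}[\|w-Z\|_{r}]$, that is, $\text{E}[\rho_{r}(A,\mathcal X)]\le\text{E}[\rho_{r}(B,\mathcal X)]$. As $B\in\pfc(\R^p)$ was arbitrary, $A$ is a minimizer and therefore $D_{r}(A;\mathcal X)=\sup_{B\in\pfc(\R^p)}D_{r}(B;\mathcal X)$, which is Property P2 for functional symmetry.

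I expect the only delicate point to be the passage from the distributional hypothesis on the random function $Z$ to the scalar identity $\text{E}[\|w+Z\|_{r}]=\text{E}[\|w-Z\|_{r}]$. What must be checked is that $\omega\mapsto Z(\omega)$ is a bona fide random element of $L^{r}(\mathbb S^{p-1}\times[0,1],\mathcal V_{p}\otimes\nu)$, so that ``identically distributed'' is meaningful; this holds because $\mathcal X$ is a fuzzy random variable, its support function is jointly measurable, and $L^{r}$ is separable for $r\in[1,\infty)$. Granting this, the map $z\mapsto\|w+z\|_{r}$ is Borel, and pushing the common law of $Z$ and $-Z$ forward through it keeps the two scalar laws (and hence, by the finiteness already secured, the two expectations) equal. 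Everything else reduces to strict monotonicity of $t\mapsto(1+t)^{-1}$ and the triangle inequality.
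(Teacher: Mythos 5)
Your proof is correct and follows essentially the same route as the paper's: both reduce P2 to showing that $A$ minimizes $U\mapsto\text{E}[\rho_r(U,\mathcal X)]$, pass to the Banach space $L^{r}(\mathbb S^{p-1}\times[0,1],\mathcal V_{p}\otimes\nu)$ via support functions, and exploit functional symmetry through the identity $\text{E}[\|Z+f\|_r]=\text{E}[\|Z-f\|_r]$ combined with the triangle inequality (your inequality $2\|Z\|_r\le\|w+Z\|_r+\|w-Z\|_r$ is the paper's midpoint-convexity step with $f=w$, multiplied by two). The only cosmetic difference is that the paper minimizes over all of $\mathcal B=L^r$ and then restricts to support functions, whereas you compare directly against $s_B$; your explicit attention to measurability and finiteness is sound and matches what the paper implicitly uses.
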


\subsubsection{Properties P3 and P4}

We will now study properties P3 and P4 for $L^r$-type depths.

Lemma \ref{lemaintegrably} guarantees the finteness of the expectation in the denominator of $D_{r}$  for each $r\in [1,\infty)$ and for every integrable bounded fuzzy random variable. In the case of $RD_{r}$,  we consider fuzzy random variables $\mathcal{X}\in L^{r}[\mathcal{F}_{c}(\mathbb{R}^{p})]$.

\begin{theorem}\label{resultadoDr3}\label{aaa}
	Let $r\in[1,\infty)$, $\mathcal{J} = \mathcal{F}_{c}(\mathbb{R}^{p})$, $ \mathcal{H}_{1}\subseteq L^{1}[\mathcal{F}_{c}(\mathbb{R}^{p})]$ and $ \mathcal{H}_{r}\subseteq L^{r}[\mathcal{F}_{c}(\mathbb{R}^{p})]$. Then $D_{r}$ based on $\mathcal{J}$ and $\mathcal{H}_{1}$, and $RD_{r}$ based on $\mathcal{J}$ and $\mathcal{H}_{r}$ both satisfy
	\begin{itemize}
		\item	P3a and P4a,
		\item P3b  for the $\rho_{s}$ and $d_{s,\theta}$ metrics for any $s\in (1,\infty)$ and $\theta\in (0,\infty)$,
		\item P4b for the $\rho_{s}$ and $d_{s,\theta}$ metrics for any $s\in [1,r]$ and $\theta\in (0,\infty)$.
	\end{itemize}
\end{theorem}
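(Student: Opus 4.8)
The plan is to handle the four items in turn, reducing each to results already available. Throughout, identify a fuzzy set with its support function via $\iota(A)=s_A$, and record two facts: since $\mathcal V_p\otimes\nu$ is a probability measure, $\rho_s\le\rho_r$ whenever $s\le r$; and $\rho_r$ is the $L^r$-norm distance between support functions, so it obeys the triangle inequality. Note first that the deepest set $A$ satisfies $E[\rho_r(A,\mathcal X)]<\infty$ (resp. $E[\rho_r(A,\mathcal X)^r]<\infty$), because Lemma \ref{lemaintegrably} makes $D_r(\text{I}_{\{0\}};\mathcal X)>0$ (resp. $RD_r(\text{I}_{\{0\}};\mathcal X)>0$), so the supremum defining the deepest point is positive. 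For P3a, $D_r$ and $RD_r$ are exactly of the form $(1+E[C(\cdot,\mathcal X)])^{-1}$ with $C=\rho_r(\cdot,\mathcal X)$ and $C=\rho_r(\cdot,\mathcal X)^r$, which are convex by Proposition \ref{P3dtheta}; since Lemma \ref{lemaintegrably} gives $E[C(\text{I}_{\{0\}},\mathcal X)]<\infty$ on $L^1$ (resp. $L^r$), Lemma \ref{teoremaZuo} applies and yields P3a directly.

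For P4a, \eqref{soportesuma} and the (reverse) triangle inequality give
$$\rho_r(A+\lambda\cdot U,\mathcal X)\ge\lambda\cdot\rho_r(U,\text{I}_{\{0\}})-\rho_r(A,\mathcal X),$$
with $\rho_r(U,\text{I}_{\{0\}})=\|s_U\|_r>0$ since $U\neq\text{I}_{\{0\}}$. Taking expectations sends $E[\rho_r(A+\lambda\cdot U,\mathcal X)]\to\infty$ as $\lambda\to\infty$, so $D_r(A+\lambda\cdot U;\mathcal X)\to 0$; for $RD_r$ one first applies Jensen's inequality ($t\mapsto t^r$ convex) to obtain $E[\rho_r(A+\lambda\cdot U,\mathcal X)^r]\ge(E[\rho_r(A+\lambda\cdot U,\mathcal X)])^r\to\infty$.

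The heart of the argument is P3b, which I would reduce to P3a through strict convexity. For $s\in(1,\infty)$ the metric $\rho_s$ is the $L^s$-distance between support functions, while $d_{s,\theta}$ (with $\theta>0$) is, by its very definition, the distance obtained from the affine isometric embedding $A\mapsto(\midd(s_A),\spr(s_A))$ into $L^s\oplus_s L^s$ with the second summand weighted by $\theta$; both $L^s$ and the $\oplus_s$-sum of strictly convex spaces are strictly convex for $s\in(1,\infty)$. Suppose now $d(A,V)=d(A,U)+d(U,V)$ for one of these metrics. Transferred to the ambient Banach space, this is equality in the triangle inequality, which in a strictly convex space forces the image of $U$ to lie on the segment between the images of $A$ and $V$: there is $t\in[0,1]$ with $\iota(U)=(1-t)\cdot\iota(A)+t\cdot\iota(V)$. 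Since the embedding is affine with respect to the fuzzy arithmetic (by \eqref{soportesuma} together with the linearity of $\midd$ and $\spr$) and injective, this means $U=(1-t)\cdot A+t\cdot V$ as fuzzy sets. Invoking P3a with $\lambda=t$ then gives $D(A;\mathcal X)\ge D(U;\mathcal X)\ge D(V;\mathcal X)$, which is P3b.

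For P4b it suffices to show that $d(A_n,A)\to\infty$ forces $\rho_r(A_n,A)\to\infty$, since then $E[\rho_r(A_n,\mathcal X)]\ge\rho_r(A_n,A)-E[\rho_r(A,\mathcal X)]\to\infty$ (and, via Jensen, $E[\rho_r(A_n,\mathcal X)^r]\to\infty$), so the depth vanishes. If $d=\rho_s$ with $s\in[1,r]$ this is immediate from $\rho_s\le\rho_r$. If $d=d_{s,\theta}$ with $s\in[1,r]$ and $\theta>0$, using the invariance of $\mathcal V_p$ under $u\mapsto-u$ one bounds $\|\midd(s_{A_n}-s_A)\|_s$ and $\|\spr(s_{A_n}-s_A)\|_s$ by $\rho_s(A_n,A)$, whence $d_{s,\theta}(A_n,A)\le(1+\theta)^{1/s}\rho_s(A_n,A)\le(1+\theta)^{1/s}\rho_r(A_n,A)$, and divergence of the left side again forces $\rho_r(A_n,A)\to\infty$. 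The main obstacle is the P3b step: pinning down, through strict convexity of $L^s$ and of the $\oplus_s$-sum, that the purely metric betweenness condition coincides with the algebraic convex combination $U=(1-t)\cdot A+t\cdot V$, which is exactly what collapses P3b onto P3a; the other three items are routine once the finiteness of $E[\rho_r(A,\mathcal X)]$ and the comparison $\rho_s\le\rho_r$ are in place.
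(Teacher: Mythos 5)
Your proof is correct and follows essentially the same route as the paper: P3a via the convexity of $\rho_r(\cdot,\mathcal X)$ and $\rho_r(\cdot,\mathcal X)^r$ (Proposition \ref{P3dtheta}) combined with Lemma \ref{teoremaZuo}, P3b by reduction to P3a through strict convexity of $L^s$ and of the weighted $\oplus_s$-sum (this is exactly the content of Lemma \ref{teorema3b}, whose argument you inline rather than cite), and P4b via the reverse triangle inequality together with the comparisons $\rho_s\le\rho_r$ and $d_{s,\theta}\le(1+\theta)^{1/s}\rho_s$. The only deviations are minor and sound: you prove P4a directly by a reverse-triangle-inequality estimate instead of deducing it from P4b through Lemma \ref{teorema4b}, and you obtain finiteness of $\text{E}[\rho_r(A,\mathcal X)]$ at the maximizer from positivity of the depth at $\text{I}_{\{0\}}$ rather than by the triangle inequality through $\text{I}_{\{0\}}$.
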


\if0
..............................................
\begin{corollary}
	For any $r\in (1,\infty),$ $LD_{r}$ and $D_{r}$  satisfy   P4b  for any $\rho_{s}$ distance with $s\in (1,\infty)$.
\end{corollary}
..............................................
\fi

In general, P4b does not admit $s>r$, as shown for $r=1$ in \cite[Example 5.9]{primerarticulo}. Based on Lemma \ref{lemamidintegrably}, the function $D_{r}^{\theta}$ is well defined for integrably bounded fuzzy random variables. For the case of $RD_{r}^{\theta}$, we consider fuzzy random variables $\mathcal{X}\in L^{r}[\mathcal{F}_{c}(\mathbb{R}^{p})]$.

\begin{theorem}\label{P34Dtheta}
	Let $r\in[1,\infty)$, $\theta\in[0,\infty)$, $\mathcal{J} = \mathcal{F}_{c}(\mathbb{R}^{p})$, $ \mathcal{H}_{1}\subseteq L^{1}[\mathcal{F}_{c}(\mathbb{R}^{p})]$ and $ \mathcal{H}_{r}\subseteq L^{r}[\mathcal{F}_{c}(\mathbb{R}^{p})]$. Then $D_{r}^{\theta}$ based on $\mathcal{J}$ and $\mathcal{H}_{1}$ and $RD_{r}^{\theta}$ based on $\mathcal{J}$ and $\mathcal{H}_{r}$ satisfy
	\begin{itemize}
		\item P3a,
		\item P4a if $\theta\in(0,\infty)$,
		\item P3b for the $\rho_{s}$ and $d_{s,\theta}$ metrics for any $s\in (1,\infty)$ and $\theta\in (0,\infty)$,
		\item P4b for the $\rho_{s}$ and $d_{s,\theta}$ metrics for any $s\in [1,r]$ and $\theta\in (0,\infty)$.
	\end{itemize}
\end{theorem}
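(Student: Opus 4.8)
The plan is to handle $D_r^\theta$ and $RD_r^\theta$ in parallel by writing both as $(1+\text{E}[C(\cdot,\mathcal{X})])^{-1}$, with the convex map $C=d_{r,\theta}(\cdot,\mathcal{X})$ in the first case and $C=d_{r,\theta}(\cdot,\mathcal{X})^r$ in the second (their convexity being Proposition \ref{P3dtheta}). Throughout I would exploit that, for $\theta>0$, $d_{r,\theta}$ is the metric induced by the isometric embedding $A\mapsto\widehat A:=(\midd(s_A),\spr(s_A))$ into the $\oplus_r$-Banach space displayed after \eqref{dr}, and that, since $\mathcal{V}_p\otimes\nu$ is a probability measure, $\|f\|_s\le\|f\|_r$ whenever $1\le s\le r$. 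Property P3a then reduces to checking the finiteness hypothesis of Lemma \ref{teoremaZuo}: for $D_r^\theta$ this is exactly Lemma \ref{lemamidintegrably}, while for $RD_r^\theta$, using $\widehat{\text{I}_{\{0\}}}=0$ and $|\midd(s_\mathcal{X})|,|\spr(s_\mathcal{X})|\le\|\mathcal{X}_0\|$, one gets $d_{r,\theta}(\text{I}_{\{0\}},\mathcal{X})^r\le(1+\theta)\|\mathcal{X}_0\|^r$ and hence $\text{E}[d_{r,\theta}(\text{I}_{\{0\}},\mathcal{X})^r]<\infty$ for $\mathcal{X}\in L^r$.

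For P4a (with $\theta>0$) I would embed and use $\widehat{A+\lambda\cdot U}=\widehat A+\lambda\widehat U$ together with the triangle inequality to get $d_{r,\theta}(A+\lambda\cdot U,\mathcal{X})\ge\lambda\,d_{r,\theta}(U,\text{I}_{\{0\}})-d_{r,\theta}(A,\mathcal{X})$. Here $\theta>0$ is indispensable, since it makes $d_{r,\theta}$ a genuine metric, so that $U\ne\text{I}_{\{0\}}$ forces $d_{r,\theta}(U,\text{I}_{\{0\}})>0$ and the right-hand side diverges with $\lambda$; taking expectations sends $\text{E}[C]\to\infty$ and the depth to $0$. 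For $\theta=0$ a fuzzy set with vanishing mid defeats the bound, which is precisely the counterexample in Proposition \ref{P4Dtheta}.

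Property P3b (for $\rho_s$ or $d_{s,\theta}$ with $s\in(1,\infty)$, $\theta>0$) is the step I expect to be the main obstacle, and I would reduce it to P3a by a strict-convexity argument. After the appropriate embedding ($A\mapsto s_A\in L^s$ for $\rho_s$, or $A\mapsto\widehat A$ for $d_{s,\theta}$), the hypothesis $d(A,V)=d(A,U)+d(U,V)$ is exactly the equality case $\|a-v\|=\|a-u\|+\|u-v\|$ of the triangle inequality in a Banach space. Since $L^s$ is strictly convex for $s\in(1,\infty)$ and an $\oplus_s$-sum of strictly convex spaces is again strictly convex for $s\in(1,\infty)$, this equality forces $u$ onto the segment $[a,v]$; because the support function determines the fuzzy set and $\midd,\spr$ are linear, this translates into $U=(1-t)\cdot A+t\cdot V$ for some $t\in[0,1]$. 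As $A$ is the deepest set, applying P3a with its free argument equal to $V$ and $\lambda=t$ yields $D(A;\mathcal{X})\ge D(U;\mathcal{X})\ge D(V;\mathcal{X})$. The exclusion of $s=1$ and $\theta=0$ is dictated exactly by the failure of strict convexity in those cases.

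Finally, for P4b (with $s\in[1,r]$, $\theta>0$) I would transfer divergence in the reference metric into divergence in $d_{r,\theta}$ and conclude by the triangle inequality. For $d=d_{s,\theta}$, norm domination shows that $d_{s,\theta}(A_n,A)\to\infty$ forces a mid- or spread-component of $\widehat{A_n}-\widehat A$ to diverge in $L^r$, whence $d_{r,\theta}(A_n,A)\to\infty$; for $d=\rho_s$ I would first use $\rho_s\le\rho_r$ and then the equivalence of $\rho_r$ and $d_{r,\theta}$ (a Clarkson-type estimate resting on the fact that $\midd$ and $\spr$ are the odd and even parts of $s_A$ under $u\mapsto-u$). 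Writing $c_n:=d_{r,\theta}(A_n,A)\to\infty$, the triangle inequality gives $d_{r,\theta}(A_n,\mathcal{X})\ge c_n-d_{r,\theta}(A,\mathcal{X})$, so $\text{E}[d_{r,\theta}(A_n,\mathcal{X})]\to\infty$ for $D_r^\theta$; for $RD_r^\theta$ I would instead bound $d_{r,\theta}(A_n,\mathcal{X})^r\ge(c_n/2)^r$ on the event $\{d_{r,\theta}(A,\mathcal{X})\le c_n/2\}$, whose probability tends to $1$, giving $\text{E}[d_{r,\theta}(A_n,\mathcal{X})^r]\to\infty$. In both cases the depth vanishes, and the cap $s\le r$ is essential because norm domination reverses for $s>r$, matching the failure recorded in \cite[Example 5.9]{primerarticulo}.
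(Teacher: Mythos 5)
Your proposal is correct, and on P3a and P3b it follows essentially the paper's own route: convexity of $d_{r,\theta}(\cdot,\mathcal{X})$ and $d_{r,\theta}(\cdot,\mathcal{X})^{r}$ fed into Lemma \ref{teoremaZuo} (your explicit check $\text{E}[d_{r,\theta}(\text{I}_{\{0\}},\mathcal{X})^{r}]\le(1+\theta)\text{E}[\|\mathcal{X}_0\|^{r}]<\infty$ is a point the paper leaves implicit), and reduction of P3b to P3a via strict convexity of $L^{s}$ and of the $\oplus_{s}$-sum, which is exactly Lemma \ref{teorema3b} together with the criterion of \cite{strictdirectsum} that the paper verifies through the auxiliary function $\psi$. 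You genuinely diverge in two places. First, for P4a the paper invokes Lemma \ref{teorema4b} to deduce it from P4b, whereas you prove it directly from the reverse triangle inequality $d_{r,\theta}(A+\lambda\cdot U,\mathcal{X})\ge\lambda\, d_{r,\theta}(U,\text{I}_{\{0\}})-d_{r,\theta}(A,\mathcal{X})$; this is self-contained and makes transparent why $\theta>0$ is indispensable, at the small cost of having to record that $\text{E}[d_{r,\theta}(A,\mathcal{X})]<\infty$ (Lemma \ref{lemamidintegrably} plus the triangle inequality). Second, for P4b under the $\rho_{s}$-metrics the paper shows that $\rho_{r}(A_n,A)\to\infty$ forces $\|\midd(s_{A_n})\|_{r}\to\infty$ and/or $\|\spr(s_{A_n})\|_{r}\to\infty$ and then bounds $\text{E}[d_{r,\theta}(A_n,\mathcal{X})]$ below by the divergent component, while you pass through the two-sided equivalence $(1+\theta^{-1/r})^{-1}\rho_{r}\le d_{r,\theta}\le(1+\theta)^{1/r}\rho_{r}$ for $\theta>0$, valid because the antipodal invariance of $\mathcal{V}_{p}$ gives $\|\midd(f)\|_{r}\le\|f\|_{r}$ and $\|\spr(f)\|_{r}\le\|f\|_{r}$; this is a cleaner, reusable statement (although "Clarkson-type" oversells it --- it is only the triangle inequality plus symmetry of the Haar measure). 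Finally, for the raised depths you replace the paper's one-line Jensen bound $\text{E}[d^{r}]\ge\text{E}[d]^{r}$ by a truncation argument on the event $\{d_{r,\theta}(A,\mathcal{X})\le c_n/2\}$; both are valid, Jensen being shorter.
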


\begin{remark}\label{re}
	The case $s=1$ is special for property P3b in Theorems \ref{resultadoDr3} and \ref{P34Dtheta} because the space $L^{1}(\mathbb{S}^{p-1}\times [0,1],\mathcal{V}^{p}\otimes\nu)$ is not strictly convex. This results in that P3a and P3b are not necessarily equivalent for $s=1$, that equivalence being used to prove P3b for $s>1.$
\end{remark}

For $\theta = 0$, Properties P4a and P4b are not satisfied, as shown next. Note that the distance function $d_{2,\theta}$ is defined in \cite{Trutsching} for $\theta\in (0,1].$ It is not a distance for $\theta = 0,$ as mentioned in Section \ref{Ld}.

\begin{proposition}\label{P4Dtheta} 
 $D_{r}^{0}$ and $RD_{r}^{0}$ can fail P4a for any $r\in [1,\infty)$ and P4b with the  $\rho_{s}$ metric for any $s\in (1,\infty)$ and $r\in [1,\infty).$
\end{proposition}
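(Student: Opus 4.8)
The plan is to exploit the structural degeneracy noted after \eqref{dr}: for $\theta=0$ the functional $d_{r,0}$ is only a pseudometric, since $d_{r,0}(A,B)=\|\midd (s_A)-\midd (s_B)\|_r$ depends solely on the $\midd$ part and is completely blind to the $\spr$ part. Consequently $D_r^0$ and $RD_r^0$ cannot detect any growth of a fuzzy set that is purely in its spread, and a single counterexample will defeat both P4a and P4b at once. The key elementary fact I would isolate first is that whenever $U\in\pfc(\R^p)$ is symmetric about the origin (so that $U_\alpha=-U_\alpha$ for each $\alpha$, hence $s_U(u,\alpha)=s_U(-u,\alpha)$ and $\midd (s_U)\equiv 0$ by \eqref{midspr}), formula \eqref{soportesuma} together with the linearity of $\midd$ yields
$$\midd (s_{A+\lambda\cdot U})=\midd (s_A)+\lambda\cdot\midd (s_U)=\midd (s_A)\qquad\text{for every }\lambda\ge 0.$$
Thus adding multiples of $U$ leaves the $\midd$ component, and therefore $d_{r,0}(\,\cdot\,,\mathcal{X})$, entirely unchanged.

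For P4a I would fix such a $U$, for instance $U=\text{I}_{[-1,1]}\in\mathcal{J}\setminus\{\text{I}_{\{0\}}\}$ in $\R$ (or the indicator of a centred ball in $\R^p$). By the displayed identity, $d_{r,0}(A+\lambda\cdot U,\mathcal{X})=d_{r,0}(A,\mathcal{X})$ for all $\lambda$, so by Definitions \ref{definicionDrtheta} and \ref{definicionRDrtheta} the values $D_r^0(A+\lambda\cdot U;\mathcal{X})$ and $RD_r^0(A+\lambda\cdot U;\mathcal{X})$ are constant in $\lambda$, and they are strictly positive because Lemma \ref{lemamidintegrably} (with $\theta=0$) and the triangle inequality for the pseudometric $d_{r,0}$ guarantee $\text{E}[d_{r,0}(A,\mathcal{X})]<\infty$. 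Hence the limit as $\lambda\to\infty$ is a positive constant rather than $0$, which is exactly the failure of P4a for every $r\in[1,\infty)$. The sharpest version of this takes $\mathcal{X}=\text{I}_{\{0\}}$ deterministic and $A=\text{I}_{\{0\}}$ (a maximizer, since it makes $\midd (s_A)-\midd (s_{\mathcal{X}})\equiv 0$): then $A+\lambda\cdot U=\text{I}_{[-\lambda,\lambda]}$ is symmetric, its $\midd$ vanishes, and both depths equal $1$ along the entire ray.

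For P4b I would reuse the same $U$ and set $A_n=A+n\cdot U\in\mathcal{J}$. The mid is unchanged, so $D_r^0(A_n;\mathcal{X})$ and $RD_r^0(A_n;\mathcal{X})$ stay constant and positive as above; yet, because $s_{A_n}-s_A=n\cdot s_U$, a one-line scaling in \eqref{rhor} gives $\rho_s(A_n,A)=n\cdot\rho_s(U,\text{I}_{\{0\}})=n\cdot\|s_U\|_s$, which diverges since $U\neq\text{I}_{\{0\}}$ forces $\|s_U\|_s>0$ (with $U=\text{I}_{[-1,1]}$ one gets $\rho_s(A_n,A)=n$). This exhibits, for every $s\in(1,\infty)$ and every $r\in[1,\infty)$, a sequence with $\rho_s(A_n,A)\to\infty$ along which the depth does not tend to $0$, so P4b fails. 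I expect no genuine obstacle here: the only things to verify are the identity $\midd (s_{A+\lambda\cdot U})=\midd (s_A)$ and the divergence of $\rho_s(A_n,A)$, both of which are immediate from \eqref{soportesuma} and \eqref{rhor}; the whole content is simply that the choice $\theta=0$ discards the spread, rendering unbounded growth in shape invisible to the depth.
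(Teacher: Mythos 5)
Your proof is correct and follows essentially the same route as the paper's: both exploit that $d_{r,0}$ sees only the $\midd$ component, take a deterministic fuzzy random variable whose maximizer has vanishing $\midd$, and let the symmetric sets $\text{I}_{[-n,n]}$ grow so that $\rho_{s}(A_n,A)\to\infty$ while the depth stays equal to $1$. The only cosmetic differences are that the paper centers its example at $\text{I}_{[-1,1]}$ rather than $\text{I}_{\{0\}}$ and obtains the P4b failure for all $s\in(1,\infty)$ partly by invoking Lemma~\ref{teorema4b} (P4b implies P4a) rather than, as you do, verifying the divergence $\rho_{s}(A_n,A)=n\to\infty$ directly in a single unified counterexample.
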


\section{Real data example}\label{simulations}
	In order to compare the behavior of projection and $L^{r}$-type depths we use the dataset \textit{Trees} from the {\tt SAFD} (Statistical Analysis of Fuzzy Data) R package \cite{Colubi}. It comes from a reforestation study at the INDUROT forest institute in Spain. The study considers the  \textit{quality} of the tree, a fuzzy random variable whose observations are trapezoidal fuzzy numbers. To define it, experts took into account different aspects of the trees, including leaf structure and height-diameter ratio. 
The $x$-axis represents quality, in a scale from 0 to 5, where 0 means null quality and 5  perfect quality. The $y$-axis represents membership.  The dataset contains a random sample (size: 279) of 9 possible fuzzy values (see Figure \ref{trees} and Table \ref{tabladifusos}). There, the trapezoidal fuzzy numbers are represented by $T_i,$ $i=1, \ldots, 9$ from left to right, for which projection depth and some $L^{r}$-type depths were computed.

	\begin{figure}[htbp]
		\begin{center}
			\includegraphics[width=0.49\linewidth,scale = 0.1]{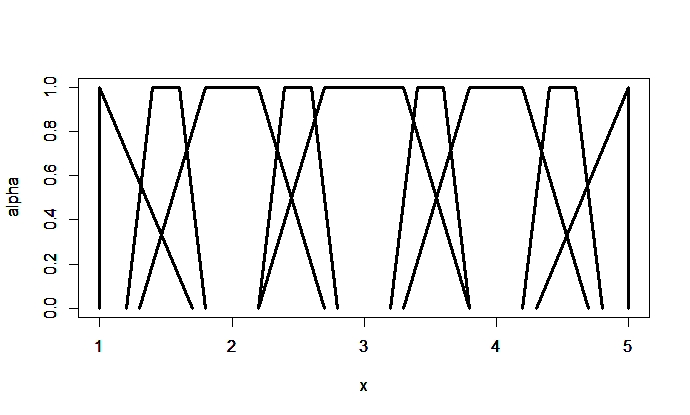}	
		\end{center}
		\caption{Display of the dataset \textit{Trees}.}
		\label{trees}
	\end{figure}
		\begin{table}[h]\label{tabladepth}	
		\begin{center}			
			\begin{tabular}{|l|ccccccccr|}				
				\hline							
				 \text{Quality}& $T_{1}$ & $T_{2}$ & $T_{3}$ & $T_{4}$ & $T_{5}$ & $T_{6}$ & $T_{7}$ & $T_{8}$ & $T_{9}$ \\ 				
				\hline 				
				\text{Frequency}&22 & 16 & 39 & 36 & 85 & 22 & 35 & 12 & 12 \\	
				\hline
				$PD_F$ & .2333 & .2917 & .3889 & .4737 & 1 & .4737 & .3889 & .2917 & .2333\\
				\hline
				$D_1$ & .3726 & .4149 & .4887 & .5488 & .5790 & .5163 & .4493 & .3781 & .3814		\\				
				\hline				
				$D_2$ & .4530 & .4751 & .5214 & .5506 & .5903 & .5287 & .5001 & .4545 & .4564 \\ 
				\hline
				$D_1^5$ & .2979 & .3036 & .3761 & .3695 & .3814 & .3545 & .3307 & .2838 & .2522 \\
				\hline
				$D_1^{10}$ & .2295 & .2390 & .2972 & .2780 & .2839 & .2694 & .2682 & .2265 & .2014 \\ 
				\hline
			\end{tabular}	
		\end{center}		
		\caption{Sample frequencies and depths for each quality value.}	
		\label{tabladifusos}	
	\end{table}

 In Figure \ref{trees}, we appreciate a certain symmetry in the data representation. Beyond this fact, we can not discard any metric \textit{a priori}, thus we compute the $L^{r}$-type dephts for $r = 1,2$, the most common cases in the literature. 
It is clear that the median of the sample (in the sense of \cite{medianbea}) is the maximizer of $D_1$ and thus is $T_{5}$. This fact, together with the fact of symmetry, makes us suppose that the projection depth will give a symmetric ordering, that is $T_{1}$ will have the same depth of $T_{9}$, $T_{2}$ the same depth of $T_{8}$ and so on. Table \ref{tabladifusos} provides that the projection depth represents the symmetry of the data. In the left panel of Figure \ref{Projectiontrees} it is represented the ordering which induce the projection depth. The ordering induced in the fuzzy numbers by the 1- and 2-natural depths is the same and it is represented in  the right panel of Figure \ref{Projectiontrees}.

	\begin{figure}[htbp]
		\begin{center}
			\includegraphics[width=0.49\linewidth,scale = 0.1]{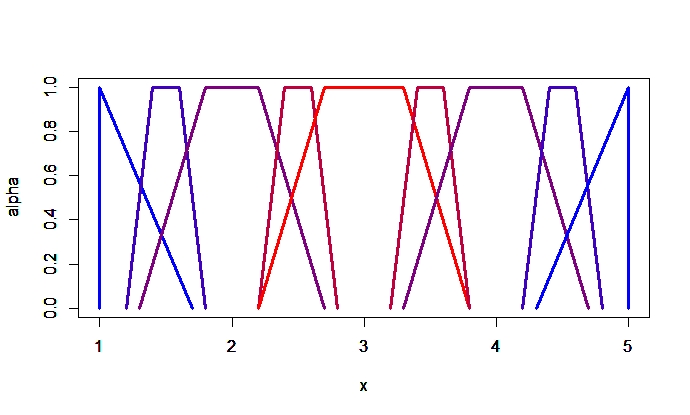}	
					\includegraphics[width=0.49\linewidth,scale = 0.1]{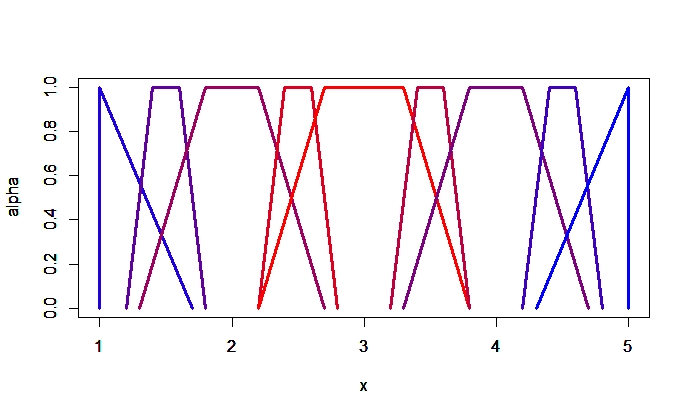}	
		\end{center}
		\caption{Display of the dataset \textit{Trees}. Color is assigned based on the projection depth (left panel) and on the $1$-natural and $2$-natural depths (right panel) of each fuzzy set   in the empirical distribution. Colors range from red (high depth) to blue (low depth).}
		\label{Projectiontrees}
	\end{figure}

	Finally, we compute some examples of $(r,\theta)$-location depth. The cases $r = 1,2$ and $\theta = 1$ generate the same ordering as $D_{1}$ and $D_{2}$. If we take $\theta>1$, we prioritize \textit{shape} over \textit{location} and we should expect a different ordering (see Figure \ref{figuralrtheta}). Indeed, as $\theta$ increases, trapezoidal fuzzy sets with intermediate slopes become deeper than centrally located ones.
	
	\begin{figure}[htbp]
		\begin{center}
			\includegraphics[width=0.49\linewidth,scale = 0.1]{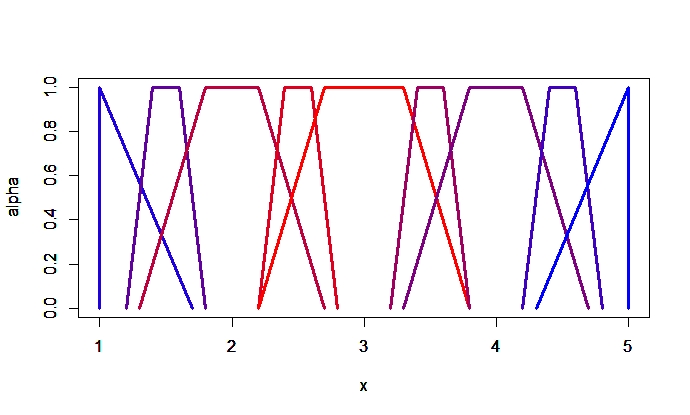}	
			\includegraphics[width=0.49\linewidth,scale = 0.1]{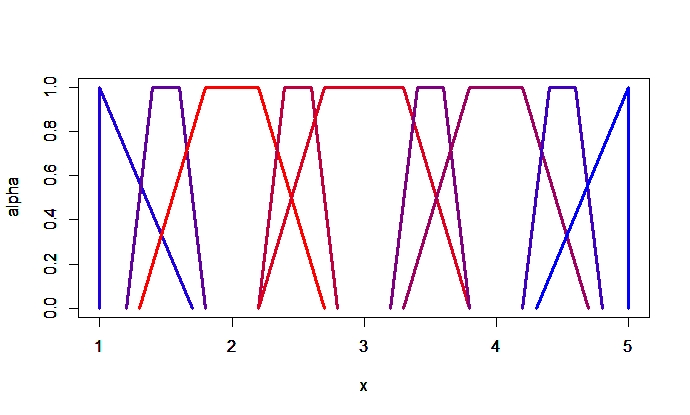}	
		\end{center}
		\caption{Display of the dataset \textit{Trees}. Color is assigned based on the $(1,5)$-location depth (left) and $(1,10)$-location  depth (right), ranging from red (high depth) to blue (low depth).}
		\label{figuralrtheta}
	\end{figure}

\section{Proofs}\label{proofs}
This section contains the mathematical proofs of the results in Sections \ref{[[3]]} and \ref{[[4]]}.

The proof of Theorem \ref{theoremprojectionsemilinear} relies on Lemmas \ref{teorema3b} and \ref{teorema4b} below  \cite[Theorem 5.4 and Proposition 5.8]{primerarticulo}. 	
Given a  metric $d$ in $\mathcal{F}_{c}(\mathbb{R}^{p}),$ these lemmas consider the following assumptions.
	\begin{itemize}
		\item[(A1)] $d(\gamma\cdot A,\gamma\cdot B) = \gamma\cdot d(A,B)$ for all $A,B\in\mathcal{F}_{c}(\mathbb{R}^{p})$ and $\gamma\in [0,\infty),$
		\item[(A2)] $d(A+W,B+W) = d(A,B)$ for all $A,B,W\in\mathcal{F}_{c}(\mathbb{R}^{p}).$
	\end{itemize}

	\begin{lemma}\label{teorema3b}
		Let $(\mathbb{E},\|\cdot\|)$ be a strictly convex Banach space, $d$ a metric in $\mathcal{F}_{c}(\mathbb{R}^{p})$ fulfilling $A1$ and $A2$, and $j: (\mathcal{F}_{c}(\mathbb{R}^{p}),d)\rightarrow (\mathbb{E},\|\cdot\|)$  an isometry. Whenever $A,B,C\in\mathcal{F}_{c}(\mathbb{R}^{p})$ are such that $d(A,B) = d(A,C) + d(B,C)$, the fuzzy set $C$ has the form $(1-\lambda)\cdot A + \lambda\cdot B$ for some $\lambda\in [0,1]$.
	\end{lemma}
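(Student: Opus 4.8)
The plan is to reduce the statement to the elementary fact that, in a strictly convex Banach space, equality in the triangle inequality forces collinearity, and then to pull this conclusion back through the isometry $j$. First I would dispose of the degenerate case $d(A,B)=0$: then $A=B$, and the hypothesis forces $d(A,C)=d(C,B)=0$, so $C=A=B=(1-\lambda)\cdot A+\lambda\cdot B$ for $\lambda=0$. From now on I assume $d(A,B)>0$ and set $\lambda^{*}:=d(A,C)/d(A,B)$, so that the betweenness hypothesis reads $d(A,C)=\lambda^{*}d(A,B)$ and $d(C,B)=(1-\lambda^{*})d(A,B)$ with $\lambda^{*}\in[0,1]$.

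Next I would analyse the natural candidate $C_{\lambda^{*}}:=(1-\lambda^{*})\cdot A+\lambda^{*}\cdot B$ and show it is metrically placed exactly like $C$. Since each $\alpha$-level is convex one has $(1-\lambda^{*})\cdot A+\lambda^{*}\cdot A=A$, so writing $A=(1-\lambda^{*})\cdot A+\lambda^{*}\cdot A$ and applying translation invariance (A2) with the common summand $(1-\lambda^{*})\cdot A$, followed by positive homogeneity (A1), gives $d(A,C_{\lambda^{*}})=d(\lambda^{*}\cdot A,\lambda^{*}\cdot B)=\lambda^{*}d(A,B)$. The symmetric computation, pulling out the summand $\lambda^{*}\cdot B$, yields $d(C_{\lambda^{*}},B)=(1-\lambda^{*})d(A,B)$. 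Thus $C_{\lambda^{*}}$ satisfies the same betweenness relation as $C$ and lies at the same distance $\lambda^{*}d(A,B)$ from $A$.

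The heart of the argument is the strict convexity step in $\mathbb{E}$. Because $j$ is an isometry, $\|j(A)-j(B)\|=\|j(A)-j(C)\|+\|j(C)-j(B)\|$; setting $u=j(C)-j(A)$ and $v=j(B)-j(C)$ this is the equality case $\|u+v\|=\|u\|+\|v\|$. In a strictly convex space this forces $v=t\cdot u$ for some $t\ge 0$ (the characterization that the unit sphere contains no nontrivial segment, which is equivalent to Definition \ref{strictbanach}), and solving gives $j(C)=(1-\lambda^{*})\cdot j(A)+\lambda^{*}\cdot j(B)$, the coefficient being pinned down by $\|j(A)-j(C)\|=\lambda^{*}\|j(A)-j(B)\|$. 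Running the identical argument for $C_{\lambda^{*}}$, whose distances to $A$ and $B$ were just computed, yields $j(C_{\lambda^{*}})=(1-\lambda^{*})\cdot j(A)+\lambda^{*}\cdot j(B)$ as well. Hence $j(C)=j(C_{\lambda^{*}})$, and since any isometry is injective we conclude $C=C_{\lambda^{*}}=(1-\lambda^{*})\cdot A+\lambda^{*}\cdot B$.

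I expect the only delicate point to be justifying this equality-case characterization from the midpoint formulation of strict convexity in Definition \ref{strictbanach}; everything else is bookkeeping with A1--A2 and the convexity of the $\alpha$-levels. I would either cite this standard Banach-space fact or insert the short passage from the midpoint condition to the no-segments property. It is worth emphasizing that the proof never requires $j$ to respect the algebraic operations of $\pfc(\R^p)$: the semilinear structure enters only through the distance computations for $C_{\lambda^{*}}$, while $j$ is used purely as an isometric (hence injective) embedding.
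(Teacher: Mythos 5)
Your proof is correct. One contextual remark first: this paper does not itself prove Lemma \ref{teorema3b}; it imports it from \cite[Theorem 5.4]{primerarticulo}, so there is no in-paper proof to compare against line by line. Judged on its own, your argument is the natural (and essentially forced) one: pass through $j$, use the equality case of the triangle inequality in a strictly convex space to place $j(C)$ on the segment joining $j(A)$ and $j(B)$, compute via A1, A2 and the levelwise-convexity identity $(1-\lambda^{*})\cdot A+\lambda^{*}\cdot A=A$ that the candidate $C_{\lambda^{*}}=(1-\lambda^{*})\cdot A+\lambda^{*}\cdot B$ sits at exactly the same distances from $A$ and $B$, run the same strict-convexity step for $C_{\lambda^{*}}$, and conclude $C=C_{\lambda^{*}}$ from injectivity of the isometry. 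Your closing observation is the key structural point and deserves emphasis: the lemma holds for a bare isometric embedding, with no assumption that $j$ respects the semilinear operations; the algebra of $\mathcal{F}_{c}(\mathbb{R}^{p})$ enters only through the distance computations for $C_{\lambda^{*}}$.

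Two small patches would make the write-up airtight. (i) The equality-case characterization you invoke ($\|u+v\|=\|u\|+\|v\|$ with $u,v\neq 0$ implies $v=t\cdot u$, $t>0$) does follow from the midpoint condition of Definition \ref{strictbanach}: assuming without loss of generality $\|u\|\leq\|v\|$, the bound $\|u+v\|\leq \|u\|\cdot\bigl\|\tfrac{u}{\|u\|}+\tfrac{v}{\|v\|}\bigr\|+\|v\|-\|u\|$ together with the hypothesis gives $\bigl\|\tfrac{u}{\|u\|}+\tfrac{v}{\|v\|}\bigr\|\geq 2$, hence $\bigl\|\tfrac12\bigl(\tfrac{u}{\|u\|}+\tfrac{v}{\|v\|}\bigr)\bigr\|=1$, and Definition \ref{strictbanach} forces $\tfrac{u}{\|u\|}=\tfrac{v}{\|v\|}$; you rightly flag that this passage should be cited or included. (ii) Your phrase ``forces $v=t\cdot u$ for some $t\geq 0$'' silently excludes the degenerate cases $u=0$ or $v=0$ (if $u=0$, $v$ need not be a multiple of $u$). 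These correspond to $j(C)=j(A)$ or $j(C)=j(B)$, i.e.\ $C=A$ or $C=B$ by injectivity, so they should be dispatched separately (as you did for $d(A,B)=0$) before assuming both vectors are nonzero.
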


	\begin{lemma}\label{teorema4b}\label{[[6.2]]}
		Let $\mathcal{X}$ be a fuzzy random variable and $D(\cdot;\mathcal{X}):\mathcal{F}_{c}(\mathbb{R}^{p})\rightarrow [0,\infty)$ a function for which P4b holds with respect to a metric that fulfills $A1$ and $A2$. Then $D(\cdot;\mathcal{X})$ satisfies P4a.
	\end{lemma}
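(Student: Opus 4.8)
The plan is to deduce P4a from P4b by exhibiting, for each admissible $U$, a sequence along which the hypothesis of P4b is met. Recall that P4a concerns the behaviour of $D(A+\lambda\cdot U;\mathcal{X})$ as $\lambda\to\infty$, where $A$ is the maximizer of $D(\cdot;\mathcal{X})$ and $U\in\mathcal{F}_{c}(\mathbb{R}^{p})\setminus\{\text{I}_{\{0\}}\}$, while P4b asserts $D(A_{n};\mathcal{X})\to 0$ whenever $d(A_{n},A)\to\infty$. Since $D$ is real-valued, by the sequential characterization of limits it suffices to prove that $D(A+\lambda_{n}\cdot U;\mathcal{X})\to 0$ for an arbitrary sequence $\lambda_{n}\to\infty$; so I would fix such a sequence and set $A_{n}:=A+\lambda_{n}\cdot U$.

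The key step is to compute $d(A_{n},A)$ using A1 and A2, so that the growth of this distance can be read off. The arithmetic facts I would rely on are that $\text{I}_{\{0\}}$ is the additive identity (hence $A+\text{I}_{\{0\}}=A$) and that $\gamma\cdot\text{I}_{\{0\}}=\text{I}_{\{0\}}$ for every $\gamma>0$. Writing $A=\text{I}_{\{0\}}+A$ and applying A2 with the common summand $A$,
$$d(A+\lambda_{n}\cdot U,\, A)=d(\lambda_{n}\cdot U+A,\, \text{I}_{\{0\}}+A)=d(\lambda_{n}\cdot U,\, \text{I}_{\{0\}}).$$
Then, replacing $\text{I}_{\{0\}}=\lambda_{n}\cdot\text{I}_{\{0\}}$ and invoking A1,
$$d(\lambda_{n}\cdot U,\, \text{I}_{\{0\}})=d(\lambda_{n}\cdot U,\, \lambda_{n}\cdot\text{I}_{\{0\}})=\lambda_{n}\cdot d(U,\text{I}_{\{0\}}).$$

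The conclusion then follows quickly. Since $U\neq\text{I}_{\{0\}}$ and $d$ is a metric, $d(U,\text{I}_{\{0\}})>0$; hence $d(A_{n},A)=\lambda_{n}\cdot d(U,\text{I}_{\{0\}})\to\infty$. Applying P4b to $\{A_{n}\}_{n}$ yields $D(A_{n};\mathcal{X})=D(A+\lambda_{n}\cdot U;\mathcal{X})\to 0$, and since $\lambda_{n}\to\infty$ was arbitrary this gives $\lim_{\lambda\to\infty}D(A+\lambda\cdot U;\mathcal{X})=0$, i.e.\ P4a.

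I do not anticipate a genuine obstacle: the argument is a direct reduction in which A2 removes the fixed centre $A$ and A1 extracts the growing scalar from the metric. The only points requiring care are the bookkeeping with the fuzzy arithmetic identities (the neutrality and scaling-invariance of $\text{I}_{\{0\}}$) and the correct order of application, A2 before A1. The mild subtlety of passing from the continuous limit in P4a to sequences is harmless because $D$ takes values in $[0,\infty)$.
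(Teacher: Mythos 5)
Your proof is correct and is essentially the same argument the paper relies on (it defers this lemma to \cite[Proposition 5.8]{primerarticulo}): use A2 to cancel the common translate $A$, use A1 together with $\lambda\cdot\text{I}_{\{0\}}=\text{I}_{\{0\}}$ to get $d(A+\lambda\cdot U,A)=\lambda\cdot d(U,\text{I}_{\{0\}})$, note $d(U,\text{I}_{\{0\}})>0$ because $d$ is a genuine metric and $U\neq\text{I}_{\{0\}}$, and then invoke P4b along an arbitrary sequence $\lambda_n\to\infty$. The steps you flag as needing care (neutrality and scaling invariance of $\text{I}_{\{0\}}$, A2 before A1, the sequential reduction) are exactly the right ones and are all handled correctly.
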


	We introduce a basic result about symmetry of real random variables which is used in the proof of property P2 below.
	
		\begin{lemma}\label{teoremasimetriaReales}\label{xyz}
			Let $X$ be a real random variable symmetric with respect to $c\in\mathbb{R}$. Then $c = \text{med}(X)$ and also $c = \text{E}[X]$ provided $\text{E}[X]\in\R$ exists.
		\end{lemma}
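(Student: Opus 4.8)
The plan is to first reformulate symmetry of $X$ about $c$ as the distributional identity $X - c =^{d} c - X$, equivalently $X =^{d} 2c - X$, and then exploit this identity separately for the expectation claim and the median claim.

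For the expectation, the argument is immediate. Since equality in distribution preserves expectations whenever they exist, taking expectations in $X =^{d} 2c - X$ yields $\text{E}[X] = 2c - \text{E}[X]$, and hence $\text{E}[X] = c$.

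For the median I would proceed in two steps. First, I would recall the standard fact that the set $\text{Med}(X)$ of all medians is a nonempty closed interval $[\ell,u]$: this holds because $m\mapsto P(X\le m)$ is nondecreasing and right-continuous while $m\mapsto P(X< m)$ is nondecreasing and left-continuous, so the defining conditions $P(X\le m)\ge 1/2$ and $P(X\ge m)\ge 1/2$ cut out closed half-lines whose intersection is a closed interval. Second, using $X =^{d} 2c - X$, I would show the reflection $m\mapsto 2c-m$ maps medians to medians: indeed $P(X\ge 2c-m) = P(2c-X\le m) = P(X\le m)\ge 1/2$ and, symmetrically, $P(X\le 2c-m)\ge 1/2$. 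As an order-reversing involution preserving $\text{Med}(X)=[\ell,u]$, this reflection must swap the endpoints, forcing $\ell+u = 2c$; thus the midpoint of $\text{Med}(X)$ is $c$, which by the paper's convention is exactly $\text{med}(X)$. As a consistency check, putting $m=c$ gives $P(X\le c)=P(X\ge c)$, and since these sum to $1+P(X=c)\ge 1$, each is at least $1/2$, so $c$ itself already lies in $[\ell,u]$.

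The only delicate point I anticipate is bookkeeping the atom of $X$ at $c$ when passing between the events $\{X\le t\}$ and $\{X\ge t\}$ under the distributional identity, since the identity is most naturally stated for one-sided tails; handling the weak inequalities and the possible mass at $c$ carefully is what makes $c$ the \emph{midpoint} of a possibly nondegenerate median interval rather than merely some median. Everything else is routine.
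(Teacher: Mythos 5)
Your proof is correct, and its treatment of the median differs in structure from the paper's. Both arguments rest on the same elementary identity coming from $X=^{d}2c-X$, namely $P(X\le m)=P(X\ge 2c-m)$, and your expectation argument is identical to the paper's. But for the median the paper proceeds by two successive contradictions: first it assumes $c\notin\text{Med}(X)$ and derives $1/2>P(X\le c)=P(X\ge c)>1/2$; then, writing $\text{Med}(X)=[m,M]$ and assuming $(m+M)/2<c\le M$, it constructs a point $c-[(M-c)+\epsilon]$ inside the median interval whose reflection $M+\epsilon$ lies outside it, contradicting the tail bounds. You instead argue directly: the reflection $\sigma(m)=2c-m$ maps $\text{Med}(X)$ into itself, and since $\sigma$ is an involution (or even just from the inclusion $[2c-u,2c-\ell]\subseteq[\ell,u]$, which forces $\ell+u\le 2c\le \ell+u$), the interval $[\ell,u]$ is symmetric about $c$, so its midpoint is $c$. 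Your route is shorter, avoids the $\epsilon$-bookkeeping and the case split between singleton and non-singleton median sets, and makes transparent exactly why the midpoint convention is the right normalization: the whole median interval is symmetric about the center of symmetry. What the paper's argument buys in exchange is self-containedness at the level of individual tail probabilities; you invoke (correctly, and it is standard) that $\text{Med}(X)$ is a nonempty compact interval, whereas the paper only uses the interval structure in its second step and otherwise manipulates explicit probabilities. Both proofs are complete; yours is the cleaner packaging of the same underlying symmetry.
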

		
		\begin{proof}
			If $\text{E}[X]< \infty,$  then
			$\text{E}[X] - c = \text{E}[X - c] = \text{E}[c - X] = c - \text{E}[X],$
			where the second equality is due to the symmetric hypothesis ($X - c$ and $c - X$ are equally distributed). Thus, $\text{E}[X] = c$.	
			
			Suppose for a contradiction that $c\not\in\text{Med}(X)$. Without loss of generality we assume  $\mathbb{P}(X\leq c) < 1/2$. Therefore, $\mathbb{P}(X\geq c) > 1/2$. By the symmetry hypothesis,  $\mathbb{P}(X - c\leq 0) = \mathbb{P}(c - X\leq 0).$ Thus, we have that
			$1/2 > \mathbb{P}(X\leq c) = \mathbb{P}(X\geq c) > 1/2,$
			which leads to a contradiction. Then,
			\begin{equation}\label{p0}
				c \in \text{Med}(X).
			\end{equation}
			If we restrict $\text{Med}(X)$ to be a singleton, then $c = \text{med}(X)$.
			If the set $\text{Med}(X)=[m,M]$ is not a singleton, let us assume for a contradiction that $c\neq \text{med}(X) = (M + m)/2$.
			Taking into account \eqref{p0}  we assume, without loss of generality, 
			\begin{equation}\label{pg}
				\cfrac{(M + m)}{2}< c\leq M.
			\end{equation}
			That implies $M - c < c - m.$ Then  there exists some $\epsilon > 0$ such that $c - [(M - c) + \epsilon] > m$.
			As \eqref{pg} also implies 
			\begin{equation}\label{p4}
				M+\epsilon>c,
			\end{equation}
			we get $ c > c - [(M - c) + \epsilon] > m.$ Then $c - [(M - c) + \epsilon]\in \text{Med}(X)$ and 
			\begin{equation}\label{p2}
				\mathbb{P}(X\leq c - [(M - c) + \epsilon])\geq \cfrac{1}{2}.
			\end{equation}
			As $\mathbb{P}(X\leq M + \epsilon)\geq \mathbb{P}(X\leq c)\geq 1/2,$ by \eqref{p0} and  \eqref{p4}, and $M + \epsilon\notin \text{Med}(X),$ 
			\begin{equation}\label{p1}
				\mathbb{P}(X\geq M + \epsilon) < \cfrac{1}{2}.
			\end{equation}
			By the central symmetry of $X$, 			$\mathbb{P}(X - c\leq t) = \mathbb{P}(c - X\leq t)$ for each $t\in\mathbb{R}$. Setting $t = -[(M - c) + \epsilon]$ and taking into account \eqref{p2} and \eqref{p1},
			$$1/2\leq\mathbb{P}(X\leq c - [(M - c) + \epsilon]) = \mathbb{P}(X\geq c + (M - c) + \epsilon) < 1/2,$$
			a contradiction.
		\end{proof}

\begin{proof}[Proof of Theorem \ref{theoremprojectionsemilinear}]

	\emph{Property P1.}
	Let $M\in\mathcal{M}_{p\times p}(\mathbb{R})$ be a regular matrix and $A,B\in\mathcal{F}_{c}(\mathbb{R}^{p})$. It suffices to prove $O(M\cdot A + B; M\cdot\mathcal{X} + B) = O(A;\mathcal{X})$. By translation invariance, 
	$$\text{MAD}(s_{\mathcal{X}}(u,\alpha) + s_{B}(u,\alpha)) = \text{MAD}(s_{\mathcal{X}}(u,\alpha))$$ 
for any $u\in\mathbb{S}^{p-1}$ and $\alpha\in [0,1],$ yielding $O(M\cdot A + B;M\cdot\mathcal{X} + B) = O(M\cdot A;M\cdot\mathcal{X})$.

	Now consider the function $g:\mathbb{S}^{p-1}\rightarrow\mathbb{S}^{p-1}$ defined by 
	$$
	g(u) = \left(\cfrac{1}{\left\|M^{T}\cdot u\right\|}\right)M^{T}\cdot u.
	$$
 	Then
	\begin{equation*}
		O(M\cdot A;M\cdot\mathcal{X}) = \sup_{u\in\mathbb{S}^{p-1},\alpha\in [0,1]}\cfrac{|s_{A}(g(u),\alpha) - \text{med}(s_{\mathcal{X}}(g(u),\alpha))|}{\text{MAD}(s_{\mathcal{X}}(g(u),\alpha))} =
		O(A;\mathcal{X}),
	\end{equation*}
	where  the first identity uses \eqref{pa} and the properties of the univariate median. The second identity holds because $g$ is bijective, a consequence of  $M$ being regular.

	\emph{Property P2.}
	Let $\mathcal{X}$ be a fuzzy random variable, $F$-symmetric with respect to some $A\in\mathcal{F}_{c}(\mathbb{R}^{p})$. It implies that the real random variable $s_{\mathcal X}(u,\alpha)$ is symmetric with respect to $s_A(u,\alpha)$ for every $u\in\mathbb S^{p-1}$ and $\alpha\in [0,1]$. By Lemma \ref{xyz}, $s_{A}(u,\alpha) = \text{med}(s_{\mathcal{X}}(u,\alpha))$ for all $u\in\mathbb{S}^{p-1}$ and $\alpha\in [0,1],$ thus $O(A;\mathcal{X}) = 0.$ As $O(U;\mathcal{X})\geq 0$ for all $U\in\mathcal{F}_{c}(\mathbb{R}^{p})$, we obtain
	$$
	D_{FP}(A;\mathcal{X}) = 1 \geq \sup_{U\in\mathcal{F}_{c}(\mathbb{R}^{p})} D_{FP}(U;\mathcal{X}).
	$$
	
	\emph{Property P3a.}
	It is not hard to show that $O(C;\mathcal{X})$ is a convex function in $C$, i.e. $$O((1-\lambda)\cdot U + \lambda\cdot V;\mathcal{X})\leq (1-\lambda)\cdot O(U;\mathcal{X}) + \lambda\cdot O(V;\mathcal{X})$$ for all $U,V\in\mathcal{F}_{c}(\mathbb{R}^{p})$ and $\lambda\in [0,1],$ using the linearity of the support function, the triangle inequality and the fact that a sum of suprema majorizes the supremum of sums.
	Then, taking any $A,B\in\mathcal{F}_{c}(\mathbb{R}^{p})$  such that $A$ maximizes $D_{FP}(\cdot;\mathcal{X}),$ 
	\begin{equation}
		\begin{aligned}\nonumber
			D_{FP}((1-\lambda)\cdot A + \lambda\cdot B;\mathcal{X}) &= (1 + O((1-\lambda)\cdot A + \lambda\cdot B;\mathcal{X}))^{-1}\geq\\ \nonumber
			&(1 + (1-\lambda)\cdot O(A;\mathcal{X}) + \lambda\cdot O(B;\mathcal{X}))^{-1}\geq\\ \nonumber
			&(1 + O(B;\mathcal{X}))^{-1} = D_{FP}(B;\mathcal{X}).
		\end{aligned}
	\end{equation}

	\emph{Property P3b.} By Lemma \ref{teorema3b}, P3a and P3b are equivalent for all $\rho_{r}$ with $r\in (1,\infty)$.
	
	\emph{Property P4b.} Let  $r\in [1,\infty)$. Let $A\in\mathcal{F}_{c}(\mathbb{R}^{p})$ maximize $D_{FP}(\cdot;\mathcal{X})$ and let $\{A_{n}\}_{n}$ be a sequence of fuzzy sets such that $\lim_{n}\rho_{r}(A,A_{n}) = \infty$.
	As $\rho_{r}(A,A_{n})\leq d_{\infty}(A,A_{n})$ for every $n\in\mathbb{N},$  we have $\lim_{n} d_{\infty}(A,A_{n}) = \infty$. By the triangle inequality,
	\begin{equation}\label{in}
		\lim_{n} d_{\infty}(A_{n},\text{I}_{\{0\}}) = \infty.
	\end{equation}
	Let us denote by $A_{n,\alpha}$ the $\alpha$-level of $A_n.$ As $d_H(A_{n,\alpha},\{0\}) = \sup\{\|x\| : x\in A_{n,\alpha}\}$ and $A_{n,\alpha}\subseteq A_{n,0}$ for all $\alpha\in [0,1]$ and $n\in\mathbb{N}$, we have
	\begin{equation}\label{su}
		d_{\infty}(A_{n},\text{I}_{\{0\}}) = d_H(A_{n,0},\{0\}) = \sup\{\|x\| : x\in A_{n,0}\}.
	\end{equation}
	Since the norm is continuous as a function and each $A_{n,0}$ is compact, the supremum is attained at some $x_n\in A_{n,0}$. Thus 
$$\lim_{n}\|x_{n}\| = \lim_{n} d_{\infty}(A_{n},\text{I}_{\{0\}})=\infty.$$
	In particular, some $e_i$ in the standard basis $\{e_1,\ldots,e_p\}$ of $\R^p$ is such that $\lim_{n} \langle e_i,x_{n}\rangle = \infty$. As $\langle e_i,x_{n}\rangle\leq s_{A_{n}}(e_i,0)$ for every $n\in\mathbb{N}$, we have  $\lim_{n} s_{A_{n}}(e_i,0) = \infty$.
	Taking this into account, since $\mbox{med}(s_{\mathcal{X}}(e_i,0))\in\mathbb{R}$ and $\mbox{MAD}(s_{\mathcal{X}}(e_i,0))\in [0,\infty),$
	\begin{equation*}
		\lim_{n\rightarrow\infty}O(A_{n};\mathcal{X})\geq\lim_{n\rightarrow\infty}\cfrac{\left|s_{A_{n}}(e_i,0) - \mbox{med}(s_{\mathcal{X}}(e_i,0))\right|}{\mbox{MAD}(s_{\mathcal{X}}(e_i,0))} = \infty.
	\end{equation*}
	Then, $\lim_{n}D_{FP}(A_{n};\mathcal{X}) = 0,$ and $D_{FP}$ satisfies P4b for the $\rho_{r}$ metric for every $r\in [1,\infty)$, as well as for $d_\infty$.
	
	Now, let $\{A_{n}\}_{n}$ be a sequence such that $\lim_{n} d_{r}(A,A_{n}) = \infty$ for some $r\in [1,\infty)$. As $d_{r}(A,A_{n})\leq d_{\infty}(A,A_{n})$ for every $n\in\mathbb{N}$, the same proof establishes P4b for $d_{r}$. 
	
	\emph{Property P4a.} By Lemma \ref{teorema4b}, P4b for the $\rho_r$-metric implies P4a, for any $r\in [1,\infty)$.
\end{proof}

\begin{proof}[Proof of Proposition \ref{P3dtheta}]
	\begin{enumerate}
	\item[Case 1 ($\rho_{r}$ and $\rho_{r}^{r}$).] For $r\in [1,\infty)$ and $\mathcal{X}\in L^{r}[\mathcal{F}_{c}(\mathbb{R}^{p})]$,
	\begin{equation}
		\begin{aligned}
			&\rho_{r}((1-\lambda)\cdot A + \lambda\cdot B,\mathcal{X}) = \|s_{(1-\lambda)\cdot A + \lambda\cdot B} - s_{\mathcal{X}}\|_{r} = \\ \nonumber 
			&\|(1-\lambda)\cdot s_{A} +\lambda\cdot s_{B} - s_{\mathcal{X}}\|_{r} = \|(1-\lambda)\cdot (s_{A} - s_{\mathcal{X}}) + \lambda\cdot (s_{B}-s_{\mathcal{X}})\|_{r}\leq \\ \nonumber
			&\|(1-\lambda)\cdot (s_{A} - s_{\mathcal{X}})\|_{r} + \|\lambda\cdot (s_{B}-s_{\mathcal{X}})\|_{r} = (1-\lambda)\cdot \rho_{r}(A,\mathcal{X}) + \lambda\cdot \rho_{r}(B,\mathcal{X}),
		\end{aligned}
	\end{equation}
for every $A,B\in\mathcal{F}_{c}(\mathbb{R}^{p})$ and $\lambda\in [0,1]$, where the inequality is due to the triangle inequality and the second equality due to the linearity of the support function.

Now let us consider the function $f:[0,\infty)\rightarrow [0,\infty)$ defined by $f(x) = x^r$. The function $f$ is convex and increasing, thus
\begin{equation}
	\begin{aligned}
		&\rho_{r}((1-\lambda)\cdot A + \lambda\cdot B,\mathcal{X}) ^r = f\left(\rho_{r}((1-\lambda)\cdot A + \lambda\cdot B,\mathcal{X})\right)\leq\\ \nonumber
		&f\left((1-\lambda)\cdot\rho_{r}(A,\mathcal{X}) + \lambda\cdot\rho_{r}(B,\mathcal{X})\right)\leq (1-\lambda)\cdot f\left(\rho_{r}(A,\mathcal{X})\right) + \lambda\cdot f\left(\rho_{r}(B,\mathcal{X})\right) =\\ \nonumber
		&(1-\lambda)\cdot\rho_{r}(A,\mathcal{X})^r + \lambda\cdot\rho_{r}(B,\mathcal{X})^r,
	\end{aligned}
\end{equation} 
for all $A,B\in\mathcal{F}_{c}(\mathbb{R}^{p})$ and $\lambda\in [0,1]$.

\item[Case 2 ($d_{r,\theta}$ and $d_{r,\theta}^{r}$).] Let $r\in [1,\infty)$, $\theta\in [0,\infty)$ and $\mathcal{X}\in L^{r}[\mathcal{F}_{c}(\mathbb{R}^{p})]$. The mapping 
$$
(\|\cdot\|_r^r +\theta\|\cdot\|_r^r)^{1/r} : L^{r}(\mathbb{S}^{p-1}\times [0,1],\mathcal{V}_{p}\otimes\nu)\oplus_r L^{r}(\mathbb{S}^{p-1}\otimes [0,1],\theta^{1/r}\cdot (\mathcal{V}_{p}\otimes\nu))\rightarrow [0,\infty)
$$ 
is a norm. We identify each $A\in\mathcal{F}_c(\mathbb{R}^p)$ with the pair 
$$
(\midd(s_{A}),\spr(s_{A}))\in L^{r}(\mathbb{S}^{p-1}\times [0,1],\mathcal{V}_{p}\otimes\nu)\oplus_r L^{r}(\mathbb{S}^{p-1}\otimes [0,1],\theta^{1/r}\cdot (\mathcal{V}_{p}\otimes\nu))
$$
Using the properties of $\midd$, $\spr$ and support functions one obtains
$$
h(s_{(1-\lambda)\cdot A}) + h(s_{\lambda\cdot B}) = h(s_{(1-\lambda)\cdot A+\lambda\cdot B})
$$
for every $h\in\{\midd,\spr\}$, $A,B\in\mathcal{F}_c(\mathbb{R}^p)$ and $\lambda\in [0,1]$. Now
\begin{equation}\nonumber
	\begin{aligned}
		&d_{r,\theta}((1-\lambda)\cdot A+\lambda\cdot B;\mathcal{X}) = \\ \nonumber &\Bigl(\|\midd (s_{(1-\lambda)\cdot A+\lambda\cdot B}) - \midd(s_{\mathcal{X}})\|_r^r + \theta\cdot\|\spr (s_{(1-\lambda)\cdot A+\lambda\cdot B}) - spr(s_{\mathcal{X}})\|_r^r\Bigr)^{1/r} = \\ \nonumber
		&\Bigl( \|(1-\lambda)\cdot (\midd(s_{A}) - \midd(s_{\mathcal{X}})) + \lambda\cdot (\midd (s_{B}) - \midd(s_{\mathcal{X}}))\|_r^r +\\ \nonumber
		&\theta\cdot\|(1-\lambda)\cdot (\spr(s_{A}) - \spr(s_{\mathcal{X}})) + \lambda\cdot (\spr(s_{B}) - \spr(s_{\mathcal{X}}))\|_r^r\Bigr)^{1/r}\leq\\ \nonumber
		&\Bigl(\|(1-\lambda)\cdot (\midd(s_{A}) - \midd(s_{\mathcal{X}}))\|_r^r+\theta\cdot\|(1-\lambda)\cdot (\spr(s_{A}) - \spr(s_{\mathcal{X}}))\|_r^r\Bigr)^{1/r} + \\ \nonumber
		&\Bigl(\|\lambda\cdot (\midd(s_{B}) - \midd(s_{\mathcal{X}}))\|_r^r+\theta\cdot\|\lambda\cdot (\spr(s_{B}) - \spr(s_{\mathcal{X}}))\|_r^r\Bigr)^{1/r} = \\ \nonumber
		&(1-\lambda)\cdot d_{r,\theta}(A,\mathcal{X}) + \lambda\cdot d_{r,\theta}(B,\mathcal{X})
	\end{aligned}
\end{equation}
where the inequality is due to the triangle inequality for the norm $(\|\cdot\|_r^r+\theta\|\cdot\|_r^r)^{1/r}$.

The proof for $d_{r,\theta}^r$ is analogous to that of $\rho_r^r$.

\end{enumerate}
\end{proof}

\begin{proof}[Proof of Proposition \ref{resultadoDr1}]
	Let $r\in [1,\infty]$ and let $M, A$ and $B$ be as in P1$\ast$.
	For $D_{r}$ and $RD_{r}$, it suffices to prove that for every $\omega\in\Omega$ we have $\rho_{r}(A, \mathcal{X}(\omega)) = \rho_{r}(M\cdot A, M\cdot\mathcal{X}(\omega)),$ as, by \eqref{rhor}, clearly $\rho_{r}(M\cdot A + B, M\cdot\mathcal{X}(\omega) + B) = \rho_{r}(M\cdot A, M\cdot\mathcal{X}(\omega))$. Since
	\begin{equation*}
		\rho_{r}(M\cdot A, M\cdot\mathcal{X}(\omega)) = \left(\int_{[0,1]}\int_{\mathbb{S}^{p-1}} |s_{M\cdot A}(u,\alpha) - s_{M\cdot\mathcal{X}(\omega)}(u,\alpha)|^{r}\dif\mathcal{V}_{p}(u)\, \dif\nu(\alpha) \right)^{1/r},
	\end{equation*}
	using  \eqref{pa} and the orthogonality of $M$,
	\begin{equation*}
		\rho_{r}(M\cdot A, M\cdot\mathcal{X}(\omega)) =\left(\int_{[0,1]}\int_{\mathbb{S}^{p-1}}\left|s_{A}\left(M^{T}\cdot u,\alpha\right) - s_{\mathcal{X}(\omega)}\left(M^{T}\cdot u,\alpha\right)\right|^{r}\dif\mathcal{V}_{p}(u)\, \dif\nu(\alpha) \right)^{1/r}.
	\end{equation*}
	With the change of variable $v = M^{T}u$ and the notation
	$M = (m_{i,j})_{i,j}$, $u = (u_{1},\ldots,u_{p})$ and $v = (v_{1},\ldots,v_{p})$, we have $u_{i} = \sum_{j = 1}^{p} m_{i,j}\cdot v_{j}$. Thus, the domain of integration remains the same and the Jacobian determinant is $\text{det}(J(Mv)) = \text{det}(M)$. By the orthogonality, $\text{det}(M) = \pm 1$ and $|\text{det}(J(Mv))| = 1$. Thus
	\begin{equation*}
		\rho_{r}(M\cdot A, M\cdot\mathcal{X}(\omega)) = \left(\int_{[0,1]}\int_{\mathbb{S}^{p-1}} |s_{A}(v,\alpha) - s_{\mathcal{X}(\omega)}(v,\alpha)|^{r}\dif\mathcal{V}_{p}(v)\, \dif\nu(\alpha) \right)^{1/r} =\rho_{r}(A,\mathcal{X}(\omega)).
	\end{equation*}
The proof for $D_{r}^{\theta}$  and $RD_{r}^{\theta},$ $\theta\in [0,\infty)$ follows similar ideas, as shown next.
	It suffices to prove that
	\begin{equation}
		\begin{aligned}\nonumber
			\|\midd(s_{M\cdot\mathcal{X}(\omega)}) - \midd(s_{M\cdot A})\|_{r} &= \|\midd(s_{\mathcal{X}(\omega)}) - \midd(s_{A})\|_{r}  \mbox{ and } \\ \nonumber
			\|\spr(s_{M\cdot\mathcal{X}(\omega)}) - \spr(s_{M\cdot A})\|_{r} &= \|\spr(s_{\mathcal{X}(\omega)}) - \spr(s_{A})\|_{r}
		\end{aligned}
	\end{equation}
	for any orthogonal matrix $M\in\mathcal{M}_{p\times p}(\mathbb{R})$ and   $\omega\in\Omega.$
	
	As before, by  \eqref{pa} and the orthogonality of $M$,
	\begin{equation}
		\begin{aligned}\nonumber
	&\|\midd(s_{M\cdot\mathcal{X}(\omega)}) - \midd(s_{M\cdot A})\|_{r} \\ 
	=&\left(\int_{[0,1]}\int_{\mathbb{S}^{p-1}} |\midd(s_{M\cdot\mathcal{X}(\omega)})(u,\alpha) - \midd(s_{M\cdot A})(u,\alpha)|^{r}\dif\mathcal{V}_{p}(u)\, \dif\nu(\alpha) \right)^{1/r} \\
	=&\left(\int_{[0,1]}\int_{\mathbb{S}^{p-1}} \left|\midd(s_{\mathcal{X}(\omega)})\left(M^{T}\cdot u,\alpha\right) - \midd(s_{A})\left(M^{T}\cdot u,\alpha\right)\right|^{r}\dif\mathcal{V}_{p}(u)\, \dif\nu(\alpha) \right)^{1/r}
	\end{aligned}
\end{equation}
	Again, with the change of variable $v = M^{T}\cdot u$ we obtain
	\begin{equation}
	\begin{aligned}\nonumber
	&\left\|\midd(s_{M\cdot\mathcal{X}(\omega)}) - \midd(s_{M\cdot A})\right\|_{r} \\
	=&\left(\int_{[0,1]}\int_{\mathbb{S}^{p-1}}|\midd(s_{\mathcal{X}(\omega)})(v,\alpha) - \midd(s_{A})(v,\alpha)|^{r}\dif\mathcal{V}_{p}(v)\, \dif\nu(\alpha) \right)^{1/r} \\
	=&\|\midd(s_{\mathcal{X}(\omega)}) - \midd(s_{A})\|_{r}
\end{aligned}
\end{equation}

	The proof for the spread function is analogous.
\end{proof}

\begin{proof}[Proof of Lemma \ref{lemaintegrably}]
	For any $\omega\in\Omega$ and $\alpha\in [0,1]$ we have $\mathcal{X}_{\alpha}(\omega)\subseteq\mathcal{X}_{0}(\omega),$ which implies  $|s_{\mathcal{X}(\omega)}(u,0)|\geq |s_{\mathcal{X}(\omega)}(u,\alpha)|$ for each $u\in\mathbb{S}^{p-1}.$ Thus
	\begin{equation}\nonumber
		\|\mathcal{X}_{0}(\omega)\|^r = \sup_{u}|s_{\mathcal{X}(\omega)}(u,0)|^r = \sup_{u,\alpha}|s_{\mathcal{X}(\omega)}(u,\alpha)|^r\geq \rho_{r}(\mathcal{X}(\omega),\text{I}_{\{0\}})^r.
	\end{equation}
The inequality holds because the integrand in the definition of $\rho_{s}(\mathcal{X}(\omega),\text{I}_{\{0\}})$ is precisely $|s_{\mathcal{X}(\omega)}(u,\alpha)|$.
	Taking expectations in both sides,
	$$\text{E}[ \rho_{s}(\text{I}_{\{0\}},\mathcal{X}(\omega))^r]\leq\text{E}[\|\mathcal{X}_{0}\|^r] < \infty$$
	because $\mathcal{X}\in L^r[\pfc(\R^p)]$.
\end{proof}

\begin{proof}[Proof of Lemma \ref{lemamidintegrably}]
	Fix  $\theta\in [0,\infty)$ and $r\in [1,\infty).$
	It suffices to prove $\text{E}[d_{r,\theta}(\mathcal{X},\text{I}_{\{0\}})] < \infty.$
	By  \cite[Proposition 4.2]{Trutsching},
	\begin{equation*}
		\left( \int_{\mathbb{S}^{p-1}} |\midd(s_{A})(u,\alpha)|^{r} + \theta\cdot |\spr(s_{A})(u,\alpha)|^{r}\dif\mathcal{V}_{p}(u)\right)^{1/r}\leq d_H(A_{\alpha},\{0\})\le\|A_0\|,
	\end{equation*}
	for any $A\in\mathcal{F}_{c}(\mathbb{R}^{p})$ and $\alpha\in [0,1].$  From this and  \eqref{dr}	we obtain
	\begin{equation}
		\begin{aligned}\nonumber
			&\text{E}[d_{r,\theta}(\text{I}_{\{0\}},\mathcal{X})] \leq
			&\text{E}\left[\left(\int_{[0,1]}\|\mathcal X_0\|^{r} \dif\nu(\alpha)\right)^{1/r}\right] = \text{E}[\| \mathcal{X}_0\|]<\infty
		\end{aligned}
	\end{equation}
because $\mathcal{X}$ is integrably bounded.
\end{proof}

\begin{proof}[Proof of Proposition \ref{resultadoDr2}]
	Let $\mathcal{X}\in\mathcal H$ be $F$-symmetric with respect to $A\in\mathcal{F}_{c}(\mathbb{R}^{p})$. As stated in \eqref{Amedian}, $s_{A}(u,\alpha) \in \text{Med}(s_{\mathcal{X}}(u,\alpha))$ for all $u\in\mathbb{S}^{p-1}$ and $\alpha\in [0,1]$.
	Because of that and since the medians of the integrable random variable $s_{\mathcal{X}}(u,\alpha)$ minimize the expected absolute deviation,
	\begin{equation}\label{mi}
		s_{A}(u,\alpha)\in \argmin_{x\in\mathbb{R}}\text{E}(|s_{\mathcal{X}}(u,\alpha) - x|)
	\end{equation}
	for each $u\in\mathbb{S}^{p-1}$ and $\alpha\in [0,1]$.
	Consider $E[\rho_1(U,\mathcal X)]$ for any fixed $U\in\pfc(\R^p)$. Since the function $s_{\mathcal X}$ is jointly measurable in its three arguments \cite[Lemma 4]{Kra}, by  Fubini's theorem and \eqref{rhor}
	\begin{equation*}
		\text{E}[\rho_{1}(U,\mathcal{X})] =  \int_{[0,1]}\int_{\mathbb{S}^{p-1}} \text{E}[|s_{\mathcal{X}}(u,\alpha) - s_{U}(u,\alpha)|]\dif\mathcal{V}_{p}(u)\, \dif\nu(\alpha),
	\end{equation*}

Applying \eqref{mi} now,
	\begin{equation}
		\begin{aligned}\nonumber
			\text{E}[\rho_{1}(U,\mathcal{X})]\textbf{}&\geq\int_{[0,1]}\int_{\mathbb{S}^{p-1}} \text{E}[|s_{\mathcal{X}}(u,\alpha) - s_{A}(u,\alpha)|]\dif\mathcal{V}_{p}(u)\, \dif\nu(\alpha)
			= \text{E}[\rho_{1}(\mathcal{X},A)].
		\end{aligned}
	\end{equation}
	Then $D_{1}(U;\mathcal{X})\leq D_{1}(A;\mathcal{X})$. By the arbitrariness of $U$, property P2 is satisfied.
\end{proof}

\begin{proof}[Proof of Proposition \ref{P2Dtheta}]
	Let $\mathcal{X}\in\mathcal H$ be $(\midd,\spr)$-symmetric with respect to $A\in\mathcal{F}_{c}(\mathbb{R}^{p})$. Applying the same reasoning in the proof of Proposition \ref{resultadoDr2}, but using \eqref{lemamedian} instead of \eqref{Amedian}, to the $\midd$ and $\spr$ functions  separately, we obtain
	$D_{1}^{\theta}(A;\mathcal{X})\geq D_{1}^{\theta}(U;\mathcal{X})$ for all $U\in\mathcal{F}_{c}(\mathbb{R}^{p})$ and $\theta\in [0,\infty)$.
\end{proof}

\begin{proof}[Proof of Proposition \ref{Simetriar2}]
	Let $\mathcal X\in \mathcal H$ be $F$-symmetric with respect to some $A\in\mathcal{F}_{c}(\mathbb{R}^{p})$. This implies $\text{E}[\|\mathcal{X}_{0}\|] < \infty$ and hence $\text{E}[s_{\mathcal{X}}(u,\alpha)] < \infty$ for all $u\in\mathbb{S}^{p-1}$ and $\alpha\in [0,1]$. By the definition of $F$-symmetry, the real random variable $s_{\mathcal{X}}(u,\alpha)$ is centrally symmetric with respect to $s_{A}(u,\alpha)$ for all $u\in\mathbb{S}^{p-1}$ and $\alpha\in [0,1]$. By Lemma \ref{teoremasimetriaReales},  $s_{A}(u,\alpha) = \text{E}[s_{\mathcal{X}}(u,\alpha)]$ for all $u\in\mathbb{S}^{p-1}$ and $\alpha\in [0,1]$.
	For any square integrable random variable,
	$\text{E}[X]=\argmin_{y\in\mathbb{R}}\text{E}[|X - y|^{2}].$ Then, since $\mathcal X\in L^2[\pfc(\R^p)]$,
	\begin{equation}\label{ecuacionminimo}
		s_{A}(u,\alpha)= \argmin_{U\in\mathcal{F}_{c}(\mathbb{R}^{p})} \text{E}[|s_{\mathcal{X}}(u,\alpha) - s_{U}(u,\alpha)|^{2}]
	\end{equation}
	for each $u\in\mathbb{S}^{p-1}$ and $\alpha\in [0,1]$. Like in Proposition \ref{resultadoDr2}, applying Fubini's theorem and \eqref{ecuacionminimo}, we obtain $RD_{2}(U;\mathcal{X}) \leq RD_{2}(A;\mathcal{X})$ for all $U\in\mathcal{F}_{c}(\mathbb{R}^{p})$. Thus $RD_2$ satisfies P2. 
\end{proof}

\begin{proof}[Proof of Proposition \ref{P2Dtheta2}]
	Let $\theta\in [0,\infty)$ and let $\mathcal{X}\in \mathcal H$ be $(\midd,\spr)$-symmetric with respect to $A\in\mathcal{F}_{c}(\mathbb{R}^{p})$. By applying the same reasoning in the proof of Proposition \ref{Simetriar2} but taking into account $\midd(s_{A})(u,\alpha) = \text{E}[\midd(s_{\mathcal{X}})(u,\alpha)]$ and $\spr(s_{A})(u,\alpha) = \text{E}[\spr(s_{\mathcal{X}})(u,\alpha)]$ for every $u\in\mathbb{S}^{p-1}$ and $\alpha\in [0,1]$, one obtains $RD_{2}^{\theta}(A;\mathcal{X})\geq RD_{2}^{\theta}(U;\mathcal{X})$ for all $U\in\mathcal{F}_{c}(\mathbb{R}^{p})$ and $\theta\in [0,\infty)$. 
\end{proof}

\begin{proof}[Proof of Theorem \ref{simetriaLr}]
	Let $\mathcal{X}\in \mathcal H$ be functionally symmetric with respect to $A\in\mathcal{F}_{c}(\mathbb{R}^{p})$ and $r\in [1,\infty)$. By Lemma \ref{lemaintegrably}, $D_{r}(\cdot;\mathcal{X})$ is well defined.
	 To reach 
	$$
	D_{r}(A;\mathcal{X})\geq\sup_{U\in\mathcal{F}_{c}(\mathbb{R}^{p})}D_{r}(U;\mathcal{X})
	$$
	it suffices to prove
	\begin{equation}\label{ecuacionminimoLr}
	\text{E}[\|s_{\mathcal{X}} - s_{A}\|_{r}]\leq\inf_{U\in\mathcal{F}_{c}(\mathbb{R}^{p})}\text{E}[\|s_{\mathcal{X}} - s_{U}\|_{r}].
\end{equation}

Let us denote by $\mathcal{B}$ the Banach space $L^{r}(\mathbb{S}^{p-1}\times [0,1],\mathcal{V}_{p}\otimes\nu)$. As the norm is a convex function, for every $f\in\mathcal{B}$ 
	\begin{equation}\label{ecuacionLrsimetria}
	\text{E}[\|s_{\mathcal{X}} - s_{A}\|_{r}]\leq\cfrac{1}{2}\cdot\text{E}[\|s_{\mathcal{X}} - s_{A} +f\|_{r}] + \cfrac{1}{2}\cdot\text{E}[\|s_{\mathcal{X}} - s_{A} - f\|_{r}].
	\end{equation}

	Since $\mathcal{X}$ is functionally symmetric with respect to $A$, we know $s_{\mathcal{X}}-s_{A}$ and $s_{A}-s_{\mathcal{X}}$ are identically distributed. 
Thence $\|s_{\mathcal X}-s_A+f\|_r$ and $\|-s_{\mathcal X}+s_A+f\|_r$ are identically distributed and the right-hand side of  \eqref{ecuacionLrsimetria} equals
\begin{equation}
	\begin{aligned}\nonumber
	    &\cfrac{1}{2}\cdot\text{E}[\|-(s_{\mathcal{X}} - s_{A} - f)\|_{r}] + \cfrac{1}{2}\cdot\text{E}[\|s_{\mathcal{X}} - s_{A} - f\|_{r}] = 
	    &\text{E}[\|s_{\mathcal{X}} - s_{A} - f\|_{r}].
\end{aligned}
\end{equation}
Therefore
$$
\text{E}[\|s_{\mathcal{X}} - s_{A}\|_{r}]\leq\text{E}[\|s_{\mathcal{X}} - s_{A} - f\|_{r}]
$$
for each $f\in\mathcal{B}$ and
$$
\text{E}[\|s_{\mathcal{X}} - s_{A}\|_{r}]  \leq \inf_{g\in\mathcal{B}}\text{E}[\|s_{\mathcal{X}} - g\|_{r}]\le \inf_{U\in\mathcal{F}_{c}(\mathbb{R}^{p})}\text{E}[\|s_{\mathcal{X}} - s_{U}\|_{r}]
$$
taking all possible $g=s_{A} + f\in \mathcal{B}$  and using the inclusion $\{s_{U} : U\in\mathcal{F}_{c}(\mathbb{R}^{p}) \}\subseteq\mathcal{B}$.
\end{proof}

\begin{proof}[Proof of Theorem \ref{resultadoDr3}]
	Let $r\in [1,\infty)$, $\mathcal{J} = \mathcal{F}_c (\mathbb{R}^p)$, $\mathcal{H}_{1}\subseteq L^1[\mathcal{F}_c (\mathbb{R}^p)]$ and $\mathcal{H}_r\subseteq L^r[\mathcal{F}_c (\mathbb{R}^p)]$.

	\emph{Property P3a.} By Proposition \ref{P3dtheta}, the mappings $d_r(\cdot,\cdot)$ and $d_r^r(\cdot,\cdot)$ are convex in their first argument.  Lemma \ref{teoremaZuo} yields $D_{r}$ based on $\mathcal{J}$ and $\mathcal{H}_1$, as well as $RD_r$ based on $\mathcal{J}$ and $\mathcal{H}_r$, satisfy P3a. Notice Lemma \ref{grunt} ensures that the integrability assumption in Lemma \ref{[[4.6]]} is satisfied, for the classes $\mathcal H_1$ and $\mathcal H_r$ in the statement.

	\emph{Property P3b.} By Lemma \ref{teorema3b}, P3a and P3b are equivalent for the $\rho_{s}$ metric for every $s\in (1,\infty)$.

		Now, for the $d_{s,\theta}$-metrics we want to apply Lemma \ref{teorema3b} as well, with $s\in (1,\infty)$ and $\theta\in (0,\infty)$. The mapping
		$$
		j : \mathcal{F}_c (\mathbb{R}^p)\rightarrow L^s(\mathbb{S}^{p-1}\otimes [0,1],\mathcal{V}_p\otimes\nu)\oplus_s L^s(\mathbb{S}^{p-1}\otimes [0,1],\theta^{(1/r)}\cdot(\mathcal{V}_p\otimes\nu))
		$$
		defined by $j(A) = (\midd(s_A),\spr(s_A))$ is an isometry, considering in $\mathcal{F}_c (\mathbb{R}^p)$ the metric $d_{s,\theta}$ and in $L^s(\mathbb{S}^{p-1}\otimes [0,1],\mathcal{V}_p\otimes\nu)\oplus_s L^s(\mathbb{S}^{p-1}\otimes [0,1],\theta^{(1/s)}\cdot(\mathcal{V}_p\otimes\nu))$ the distance induced by its norm $(\|\cdot\|_s^s+\theta\cdot\|\cdot\|_s^s)^{1/s}$.
		It is clear from its definition that $d_{s,\theta}$ fulfils A1 and A2.
		In order to use the lemma, we need to show that the Banach space 
		$$
		\left(L^s(\mathbb{S}^{p-1}\otimes [0,1],\mathcal{V}_p\otimes\nu)\oplus_s L^s(\mathbb{S}^{p-1}\otimes [0,1],\theta^{(1/s)}\cdot(\mathcal{V}_p\otimes\nu)),(\|\cdot\|_s^s+\theta\cdot\|\cdot\|_s^s)^{1/s}\right)
		$$ 
		is strictly convex.
		
		Let us define the mapping $\psi:[0,1]\rightarrow [0,1]$ with
		$$
		\psi (t) = \left((1-t)^s + \theta\cdot t^s\right)^{1/s}.
		$$	
		It is easy to show 
		$$
		\left(\|f\|_s^s+\theta\cdot\|g\|_s^s\right)^{1/s} = \left(\|f\|_s+\|g\|_s\right)\cdot\psi\left(\cfrac{\|g\|_s}{\|f\|_s + \|g\|_s}\right)
		$$
for every $(f,g)\in L^s(\mathbb{S}^{p-1}\otimes [0,1],\mathcal{V}_p\otimes\nu)\oplus_s L^s(\mathbb{S}^{p-1}\otimes [0,1],\theta^{(1/s)}\cdot(\mathcal{V}_p\otimes\nu))$. By \citep[Theorem 6]{strictdirectsum}, the Banach space 	$L^s(\mathbb{S}^{p-1}\otimes [0,1],\mathcal{V}_p\otimes\nu)\oplus_s L^s(\mathbb{S}^{p-1}\otimes [0,1],\theta^{(1/s)}\cdot(\mathcal{V}_p\otimes\nu))$ will be strictly convex if and only if $L^s(\mathbb{S}^{p-1}\otimes [0,1],\mathcal{V}_p\otimes\nu)$ and $L^s(\mathbb{S}^{p-1}\otimes [0,1],\theta^{1/s}\cdot(\mathcal{V}_p\otimes\nu))$ are strictly convex and the function $\psi$ is strictly convex. For $s\in(1,\infty)$,  $L^s$-spaces are always strictly convex (e.g., \cite[p. 114]{Car}), and $\Psi$ is strictly convex as $\Psi''(t)>0$ for $t\in(0,1)$. Therefore, by Lemma \ref{teorema3b}, P3b for $d_{s,\theta}$ is equivalent to P3a, which has already been established.

	\emph{Property P4b.} Let $\mathcal{X}\in\mathcal{H}_1$ be a fuzzy random variable and $A\in\mathcal{J}$ a fuzzy set maximizing $D_{r}(\cdot;\mathcal{X})$. Let us first prove the case $s=r.$ Let $\{A_{n}\}_{n}$ be a sequence of fuzzy sets in $\mathcal{J}$ such that
	\begin{equation}\label{l}\lim_{n}\rho_{r}(A_n,A) = \infty.\end{equation}
	
	As $r\geq 1,$ by Lemma \ref{rug} $\text{E}[\rho_{r}(I_{\{0\}},\mathcal{X})]$ is finite. As $\rho_{r}(I_{\{0\}},A)$ is a constant, applying  the triangle inequality to $\rho_r(A,\mathcal{X}),$ we obtain
	\begin{equation}
		\label{f}\text{E}[\rho_{r}(A,\mathcal{X})] < \infty.
	\end{equation}
	Using again the triangle inequality,
	\begin{equation}\label{f2}
		\text{E}[\rho_{r}(A_{n},\mathcal{X})]\geq\text{E}[\rho_{r}(A_{n},A) - \rho_{r}(A,\mathcal{X})]=\rho_{r}(A_{n},A) -\text{E}[ \rho_{r}(A,\mathcal{X})] \to\infty,
	\end{equation}
	where the limit is obtained from \eqref{l} and \eqref{f}.
	Accordingly, $D_{r}(A_n,\mathcal X)\to 0$. 

For the general case, notice $\rho_s\le \rho_r$ whenever $s\le r.$ Thus, $\rho_s(A_n,A)\to \infty$ implies $\rho_r(A_n,A)\to \infty$ and therefore $D_r(A_n;\mathcal X)\to 0$ by the former case.

That establishes the result for $D_r$ under the $\rho_s$-metrics. Let us prove it now for $RD_r$.

Let $\mathcal{X}\in\mathcal{H}_r$. Like before, we will prove first the case  $s = r.$ By Jensen's inequality,
\begin{equation}
	\text{E}[\rho_{r}(A_{n},\mathcal{X})^{r}]\geq \text{E}[\rho_{r}(A_{n},\mathcal{X})]^{r}. 
\end{equation}
From \eqref{f2},
\begin{equation*}
	\lim_{n\rightarrow\infty}\text{E}[\rho_{r}(A_{n},\mathcal{X})^{r}] = \infty.
\end{equation*}
Consequently, $RD_{r}(A_{n},\mathcal{X})\rightarrow 0$. The general case  follows as with $D_r.$

Now let us consider the  $d_{s,\theta}$-metrics. Let $s = r$ and  $\theta\in (0,\infty)$. Given a fuzzy random variable $\mathcal{X}\in\mathcal{H}_1$, a fuzzy set $A\in\mathcal{J}$ maximizing $D_r(\cdot;\mathcal{X})$ and a sequence $\{A_n\}_n$  in $\mathcal{J}$ such that
\begin{equation}\label{ecuaciondrtheta1}
\lim_{n}d_{s,\theta}(A_n,A) = \infty.
\end{equation}

By Lemma \ref{lemamidintegrably}, $\text{E}[d_{s,\theta}(\text{I}_{\{0\}},\mathcal{X})] < \infty$. By  \eqref{ecuaciondrtheta1}, $\lim_{n}d_{s,\theta}(A_n,A)^r = \infty$, 
whence
$$
\lim_{n}\|\midd(s_{A_n}) - \midd(s_A)\|_s^r = \infty
$$
or
$$
\lim_{n}\|\spr(s_{A_n}) - \spr(s_A)\|_s^r = \infty
$$
Since the other case is analogous, we assume without loss of generality $\|\midd(s_{A_n})-\midd(s_{A})\|_s^r\rightarrow\infty$. Moreover,
\begin{equation}
	\begin{aligned}\nonumber
		&\|\midd(s_{A_n})-\midd(s_{A})\|_s \\ \nonumber
		=&\left(\int_{[0,1]}\int_{\mathbb{S}^{p-1}} |\midd(s_{A_n})(u,\alpha) - \midd(s_{A})(u,\alpha)|^s \dif\mathcal{V}_p (u)\, \dif\nu(\alpha)\right)^{1/s} \\ 
		=&\cfrac{1}{2}\cdot\left(\int_{[0,1]}\int_{\mathbb{S}^{p-1}}|(s_{A_n}(u,\alpha) - s_{A}(u,\alpha)) + (s_{A}(-u,\alpha) - s_{A_n}(-u,\alpha))|^s \dif\mathcal{V}_p (u)\, \dif\nu(\alpha)\right)^{1/s}\\
		\le &\cfrac{1}{2}\cdot\left(\|s_{A_n} - s_{A}\|_s + \|s_{A_n} - s_{A}\|_s \right) = \rho_s(A,A_n)\le \rho_r(A,A_n)
	\end{aligned}
\end{equation}
whence $\lim_{n}\rho_r(A_n,A) = \infty$. Thus, using the previous proof, the depth function $D_r$ based on $\mathcal{J}$ and $\mathcal{H}_1$  fulfils P4b for $d_{s,\theta}$. 

The case of $RD_r$ based on $\mathcal{J}$ and $\mathcal{H}_{r}$ is done in an analogous way as in the case of $\rho_s$.

	\emph{Property P4a.} As $\rho_{r}$ and $d_{r,\theta}$ metrics fulfil assumptions $A1$ and $A2$, property P4b implies P4a (Lemma \ref{teorema4b}).
\end{proof}

\begin{proof}[Proof of Theorem \ref{P34Dtheta}]
	\
	
	\emph{Property P3a.} Like in the proof of Property P3a in Theorem \ref{resultadoDr3}, the mapping $(\|\cdot\|_{r}^{r}+\theta\cdot\|\cdot\|_{r}^{r})^{1/r}$ is convex (because it is a norm) and, by Lemma \ref{teoremaZuo}, $D_{r}^{\theta}$ and $RD_{r}^{\theta}$ satisfy P3a for any $r\in [1,\infty)$ and $\theta\in [0,\infty)$.
	
	\emph{Property P3b.} By Lemma \ref{teorema3b}, P3b is equivalent to P3a for the $\rho_{s}$ metric if $s\in (1,\infty)$. In the proof of Theorem \ref{resultadoDr3} it was shown that P3b is equivalent to P3a for the $d_{s,\theta}$ metric.

	\emph{Property P4b.} 
	Let $\theta\in (0,\infty)$. Let $\mathcal{X}\in\mathcal H_1$ and let $A\in\mathcal{F}_{c}(\mathbb{R}^{p})$ be a fuzzy set that maximizes $D_{r}^{\theta}(\cdot,\mathcal{X})$. We consider a sequence $\{A_{n}\}_{n}$ of fuzzy sets such that $\rho_r(A_n,A)\to\infty$. 
	By the triangle inequality, for any $h\in\{\midd,\spr\}$,
	\begin{equation}\label{ecuacion1Drtheta}
		\begin{aligned}
			\text{E}[\|h(s_{\mathcal{X}}) - h(s_{A_{n}})\|_{r}] &\ge E[\|h(s_{A_n})-h(s_A)\|_r-\|h(s_{\mathcal X})-h(s_A)\|_r]\\
			&=\|h(s_{A}) - h(s_{A_{n}})\|_{r} - \text{E}[\|h(s_{\mathcal{X}}) - h(s_{A})\|_{r}].
		\end{aligned}
	\end{equation}
	
	On the other hand, as $\lim_{n} \rho_{r}(A,A_{n}) = \infty$ and $\rho_{r}$ is a metric, the triangle inequality yields $\lim_{n} \rho_{r}(A_{n},\text{I}_{\{0\}}) = \infty$. By the decomposition given in  \eqref{sumasoporte},
	$$	\rho_{r}(A_{n},\text{I}_{\{0\}}) = \left(\int_{[0,1]}\int_{\mathbb{S}^{p-1}} |\midd(s_{A_{n}})(u,\alpha) + \spr(s_{A_{n}})(u,\alpha)|^{r}\dif\mathcal{V}_{p}(u)\, \dif\nu(\alpha) \right)^{1/r}$$
	$$=\|\midd(s_{A_n})+\spr(s_{A_n})\|_r\le \|\midd(s_{A_n})\|_r+\|\spr(s_{A_n})\|_r.$$
	Therefore $\lim_{n}\|\midd(s_{A_{n}})\|_{r} = \infty$ and/or $\lim_{n}\|\spr(s_{A_{n}})\|_{r} = \infty$. Since the other case is analogous, without loss of generality assume 
	\begin{equation}\label{lim}\lim_{n}\|\midd(s_{A_{n}})\|_r = \infty.
		\end{equation}
	 Because $\mathcal{X}$ is integrably bounded, by Lemma \ref{far} we have $\text{E}[d_{r,\theta}(\mathcal{X},\text{I}_{\{0\}})] < \infty,$ which implies 
	 \begin{equation}\label{lim2}
	 \text{E}[\|\midd(s_{\mathcal{X}}) - \midd(s_{A})\|_{r}] < \infty.
	 \end{equation}

Then 
	\begin{equation}\label{lim3}
		\begin{aligned}
			&E[d_{r,\theta}(A_n ,\mathcal X)]\ge \text{E}[\|\midd(s_{\mathcal{X}}) - \midd(s_{A_{n}})\|_{r}] \\ 
			&\ge \|\midd(s_{A}) - \midd(s_{A_{n}})\|_{r} - \text{E}[\|\midd(s_{\mathcal{X}}) - \midd(s_{A})\|_{r}]\\
            & \ge \|\midd(s_{A_n})\|_r - \|\midd(s_{A})\|_{r} - \text{E}[\|\midd(s_{\mathcal{X}}) - \midd(s_{A})\|_{r}]\to \infty,
		\end{aligned}
	\end{equation}
	where the first inequality is due to \eqref{dr}, the second one to  \eqref{ecuacion1Drtheta} and the limit to  \eqref{lim} and \eqref{lim2}.
	Consequently, $D_r^\theta(A_n;\mathcal X)\to 0$. That proves the case $s=r$. The case $s<r$ follows like in the  proof of  Theorem \ref{aaa}.
	
Let us prove it now for $RD_r^\theta$ and the $\rho_s$-metrics.

		Let $\theta\in (0,\infty)$. Let $\mathcal{X}\in\mathcal H_r$ and let $A\in\mathcal{F}_{c}(\mathbb{R}^{p})$  maximize $RD_{r}^{\theta}(\cdot,\mathcal{X})$. Let $\{A_{n}\}_{n}$ be a sequence of fuzzy sets such that $\rho_r(A_n,A)\to\infty$. 	By Jensen's inequality,
		\begin{equation*}
		\text{E}[d_{r,\theta}(A_n ,\mathcal X)^{r}]\geq\text{E}[d_{r,\theta}(A_n ,\mathcal X)]^{r}.
	\end{equation*}
By \eqref{lim3}, 
		\begin{equation*}
	\lim_{n\rightarrow\infty}\text{E}[d_{r,\theta}(A_n ,\mathcal X)^{r}] = \infty.
\end{equation*}
Thus $RD_{r}^{\theta}(A_{n},\mathcal{X})\rightarrow 0$. That establishes the case $s=r$. The case $s < r$ is  deduced like in the  proof of Theorem \ref{aaa}.

	The proof of P4b for $D_r^\theta$ and $RD_r^\theta$ with $d_{s,\theta}$ is analogous to that of P4b for $D_r$ and $RD_r$ with respect to the $\rho_s$-metrics (see Theorem \ref{resultadoDr3}), taking into account the inequality $d_{s,\theta}\leq d_{r,\theta}$ for $s\in[1,r]$.

	\emph{Property P4a.} By Lemma \ref{teorema4b}, property P4b for $\rho_r$ implies P4a.
\end{proof}

\begin{proof}[Proof of Proposition \ref{P4Dtheta}]
	Let $(\Omega,\mathcal{A},\mathbb{P})$ be a probabilistic space such that $\Omega = \{\omega_{1}\}$, $\mathcal{A} = \mathcal{P}(\Omega)$ and let $r\in [1,\infty)$. We consider the fuzzy random variable $\mathcal{X}$ defined by $\mathcal{X}(\omega_{1}) := \text{I}_{[-1,1]}$. Let $A = \mathcal{X}(\omega_{1})$ and $A_{n} := \text{I}_{[-n,n]}$ for all $n\in\mathbb{N}$. It is clear that $A$ maximizes $D_{r}^{0}(\cdot;\mathcal{X})$ with $D_{r}^{0}(A;\mathcal{X})=1$, and that
	$\midd(s_{B})(u,\alpha) =0$ for $B\in\{A, A_n\},$
	$\spr(s_{A})(u,\alpha) = 1,$ and $\spr(s_{A_{n}})(u,\alpha) = n$ for all $u\in\mathbb{S}^{0}, \alpha\in [0,1]$ and $n\in\mathbb{N}$.	By the $\midd/\spr$ decomposition \eqref{sumasoporte}, 
	$$
	\lim_{n\rightarrow\infty} \rho_{r}(A_{n},A) = \lim_{n\rightarrow\infty} (\int_{[0,1]}|n-1|^r d\alpha)^{1/r} = \lim_{n\rightarrow\infty} |n-1| = \infty.
	$$
	Taking into account 
	$
	\text{E}[d_{r,0}(A_{n},\mathcal{X})] = 0
	$ for all $n\in\mathbb{N},$ whence 
	$D_{r}^{0}(A_{n};\mathcal{X}) = 1$, i.e., $D_r^0$ fails P4b for $\rho_r$. In the case $r=1$, we have $RD_{1}^{0}(A_{n};\mathcal{X})=D_{1}^{0}(A_{n};\mathcal{X})$ so $RD_r^0$ can fail P4b as well.

	To prove that $D_{r}^{0}$ and $RD_{r}^{0}$ violate P4a, we  use $B := \text{I}_{[-2,2]}$. Let $r\in [1,\infty)$. Note  $A + nB = \text{I}_{[-1-2n,2n+1]}$ for all $n\in\mathbb{N}$. Clearly, $$\midd(s_{\mathcal{X}(\omega_{1})})(u,\alpha) = 0 = \midd(s_{A+nB})(u,\alpha)$$
for all $u\in\mathbb{S}^{0}$ and $\alpha\in [0,1]$. Thus $\text{E}[d_{r,0}(A+nB,\mathcal{X})] = 0$ and 
$$D_{r}^{0}(A + nB;\mathcal{X}) = 1 = RD_{r}^{0}(A+nB;\mathcal{X})$$ 
for all $n\in\mathbb{N}$ whence $D_{r}^{0}$ and $RD_{r}^{0}$ violate P4a.
	
	{\it A fortiori}, by Lemma \ref{[[6.2]]}, this is also a counterexample to property P4b for $\rho_r$, for any $r\in(1,\infty)$.
\end{proof}

\section{Concluding remarks}\label{discussion}

 Since the introduction of projection depth \cite{ZuoSerfling}, it has been applied in multivariate analysis (see, e.g., \cite{dutta} and \cite{Zuoprojection}). It measures the worst case of the outlyingness of a point, comparing the projection of the point in every direction with respect to the univariate median of the projection in that direction.
In the fuzzy case, as the support function of a fuzzy set considers the projection for every direction $u$ and every $\alpha$-level, we define the function $D_{FP}$ replacing the inner product by the support function for every $(u,\alpha)\in\mathbb{S}^{p-1}\times [0,1]$.

The function $D_{FP}$ is the natural generalization of the multivariate projection depth to the fuzzy setting (Proposition \ref{promult}). Projection depth for fuzzy sets, as the Tukey depth defined in \cite{primerarticulo}, is a semilinear depth function and also a geometric depth function for the $\rho_{r}$-distances with $r\in(1,\infty)$ (Corollary \ref{coropro}). It is also interesting that, being defined via medians, it imposes no integrability requirements on the fuzzy random variables. In summary,   projection  depth is a nice alternative to  Tukey depth in the fuzzy setting.

For any $r\in [1,\infty),$ the $L^{r}$-type fuzzy depths  satisfy  the semilinear and the  geometric depth notions under the assumption that the matrices considered in P1  are orthogonal  (Proposition \ref{resultadoDr1}). 
Property P2 is satisfied by $D_{1} = RD_{1}$ and $RD_{2}$ when $F$-symmetry is considered (see Proposition \ref{resultadoDr2} and \ref{Simetriar2}) and by $D_{r}$ for $r\in [1,\infty)$ when \textit{functional symmetry} is considered (see Theorem \ref{simetriaLr}). It is also satisfied by $D_{1}^{\theta} = RD_{1}^{\theta}$ and $RD_{2}^{\theta}$ for $\theta\in [0,\infty)$ when $(\midd,\spr)$-symmetry is considered (see Proposition \ref{P2Dtheta} and \ref{P2Dtheta2}). Although $L^{r}$-type depths are neither semilinear nor geometric depth functions, we can observe in Section \ref{simulations} that their behavior can be similar to that of projection depth, which is in fact a semilinear and a geometric depth function.

For future work, it would be desirable to study the continuity or semicontinuity properties of these depth functions, as it is done in the multivariate case (see \cite{ZuoSerfling}) and in the functional case (see \cite{NietoBattey}). It is also open to find a geometric depth function for the $\rho_{1}$ metric  or $d_{r}$ metrics, or to show the impossibility of such functions. From the point of view of applied mathematics, it could be stimulating to develop algorithms to compute some fuzzy depth proposals, in order to generalize to fuzzy sets some nonparametric methods of multivariate and functional data analysis.

\vskip 1 true cm
{\bf Acknowledgments}
The authors are supported by grant PID2022-139237NB-I00 funded by MCIN/AEI/10.13039/501100011033 and “ERDF A way of making Europe”. Additionally, L. Gonz\'alez was supported by the Spanish Ministerio de Ciencia, Innovaci\'on y Universidades grant MTM2017-86061-C2-2-P. 
P. Ter\'an is also supported by the Ministerio de Ciencia, Innovación y Universidades grant PID2019-104486GB-I00.


\begin{thebibliography}{100}

\bibitem{AngelesIJAR} 
Blanco-Fernández,B., Casals,M.R., Colubi,A., Corral,N., García-Bárzana,M., Gil,M.\'A., González-Rodríguez,G., López,M.T., Lubiano,M.A., Montenegro,M., Ramos-Guajardo,A.B., de la Rosa de Sáa,S. \&  Sinova,B. (2014). A distance-based statistical analysis of fuzzy number-valued data. International Journal of Approximate Reasoning 55(7), 1487--1501. https://doi.org/10.1016/j.ijar.2013.09.020

\bibitem{Car} N. L. Carothers (2005). A short course on Banach space theory. Cambridge Univ. Press, Cambridge.

\bibitem{Cascos}
Cascos,I., Li,Q. \& Molchanov,I. (2021). Depth and outliers for samples of sets and random sets distributions. Australiand \& New Zealand Journal of Statistics 63(1), 55--82. https://doi.org/10.1111/anzs.12326

\bibitem{Cho}
A. Cholaquidis, R. Fraiman, L. Moreno. Level sets of depth measures in abstract spaces. Test, to appear.

\bibitem{Colubi}
Colubi,A. (2009). Statistical inference about the means of fuzzy random variables. Applications to the analysis of fuzzy- and real-valued data. Fuzzy Sets and Systems 160(3), 344--356. https://doi.org/10.1016/j.fss.2007.12.019

\bibitem{Sara}
X. Dai, S. L\'opez-Pintado (2023). Tukey's depth for object data. J. Amer. Statist. Soc. 118, 1760--1772.

\bibitem{diamondkloden}
Diamond,P. \& Kloeden,P. (1990). Metric spaces of fuzzy sets. Fuzzy Sets and Systems 35(2), 241--249. https://doi.org/10.1016/0165-0114(90)90197-E

\bibitem{donohogasko}
Donoho,D.L. \& Gasko,M. (1992). Breakdown properties of location estimates based on half-space depth and projected outlyingness. Annals of Statistics 20(4), 1803--1827. https://doi.org/10.1214/aos/1176348890

\bibitem{dutta}
Dutta,S. \& Ghosh,A.K. (2012). On robust classification using projection depth. Annals of the Institute of Statistical Mathematics 64(3), 657--676. https://doi.org/10.1007/s10463-011-0324-y

\bibitem{strictbanach}
Goebel, K. (1970). Convexity of balls and fixed-point theorems for mappings with nonexpansive square. Compositio Mathematica 22, 269--274.


\bibitem{primerarticulo}
G\'onzalez-de la Fuente,L., Nieto-Reyes,A. \& Ter\'an,P. (2022). Statistical depth for fuzzy sets. Fuzzy Sets and Systems 443(A), 58--86. https://doi.org/10.1016/j.fss.2021.09.015

\bibitem{Simplicial}
G\'onzalez-de la Fuente,L., Nieto-Reyes,A. \& Ter\'an, P. Simplicial depths for fuzzy random variables. Submitted for publication.

\bibitem{GrzegorzewskiUltimo} 
Pekala,B., Dyczkowski,K., Grzegorzewski,P. \& Bentkowska,U. (2021). Inclusion and similarity measures for interval-valued fuzzy sets based on aggregation and uncertainty assessment. Information Sciences 547(3), 1182--1200.  https://doi.org/10.1016/j.ins.2020.09.072

\bibitem{trape}
Klir,G.J. \& Yuan, B. (1993). Fuzzy sets and fuzzy logic. Theory and applications. Prentice Hall, Upper Saddle River. 

\bibitem{Kra}
Kr\"atschmer,V. (2004). Probability theory in fuzzy sample spaces. Metrika 60(2), 167--189. https://doi.org/10.1007/s001840300303

\bibitem{Mol}
Molchanov,I. (2017). Theory of random sets, 2nd ed. Springer, London.

\bibitem{mostellertukey}
Mosteller,C.F. \& Tukey,J.W. (1977). Data Analysis and Regression. Addison--Wesley, Reading.

\bibitem{LiuSimplicial}
Liu,R.Y. (1990). On a notion of data depth based on random simplices. Annals of Statistics 18(1), 405--414. https://doi.org/10.1214/AOS/1176347507

\bibitem{Simetria}
Long, J.P. \& Huang, J.Z. (2015). A Study of Functional Depths. 	
https://doi.org/10.48550/arXiv.1506.01332

\bibitem{NietoBattey}
Nieto-Reyes,A. \& Battey,H. (2016). A topologically valid definition of depth for functional data. Statistical Science 31(1), 61--79. https://doi.org/10.1214/15-STS532

\bibitem{PuriRalescu}
Puri,M.L. \& Ralescu,D.A. (1986). Fuzzy random variables. Journal of Mathematical Analysis and Applications 114(2), 409--422. https://doi.org/10.1016/0022-247X(86)90093-4

\bibitem{Sinova}
Sinova,B. (2022). On depth-based fuzzy trimmed means and a notion of depth specifically defined for fuzzy numbers. Fuzzy Sets and Systems 443(A), 87--105. https://doi.org/10.1016/j.fss.2021.09.008

\bibitem{medianbea}
Sinova,B., Gil,M.\'A., Colubi,A. \& Van Aelst,S. (2012). The median of a random fuzzy number. The 1-norm distance approach. Fuzzy Sets and Systems 200, 99--115. https://doi.org/10.1016/j.fss.2011.11.004

\bibitem{strictdirectsum}
Takahashi, Y., Kato, M. \& Saito, K.-S. (2002). Strict Convexity of Absolute Norms on $\mathbb{C}^2$ and Direct Sums of Banach Spaces. Journal of Inequalities and Applications 7(2), 179--186. https://doi.org/10.1155/S1025583402000115


\bibitem{Trutsching}
Trutschnig,W., Gonz\'alez-Rodr\'iguez,G., Colubi,A. \& Gil,M.\'A. (2009). A new family of metrics for compact, convex (fuzzy) sets based on a generalized concept of mid and spread. Information Sciences 179(23), 3964--3972. https://doi.org/10.1016/j.ins.2009.06.023

\bibitem{tukey}
Tukey,J. (1975). Mathematics and picturing data. In: R. D. James, (ed.) Proceedings of the International Congress of Mathematicians 2, 523--531. Canadian Mathematical Congress, Montreal.

\bibitem{zadehfuzzysets}
Zadeh,L.A. (1965). Fuzzy sets. Information and Control 8(3), 338--353. https://doi.org/10.1016/S0019-9958(65)90241-X

\bibitem{zadehextension}
Zadeh,L.A. (1975). The concept of a linguistic variable and its application to approximate reasoning. I. Information Sciences 8(3), 199--249. https://doi.org/10.1016/0020-0255(75)90036-5

\bibitem{Zuoprojection}
Zuo,Y. (2003). Projection-based depth functions and associated medians. Annals of Statistics 31(5), 1460--1490. https://doi.org/10.1214/aos/1065705115

\bibitem{ZuoSerfling}
Zuo,Y. \& Serfling,R. (2000). General notions of statistical depth function. Annals of Statistics 28(2), 461--482. https://doi.org/10.1214/aos/1016218226
\end{thebibliography}
\end{document}